\documentclass{amsproc}
\usepackage{mathrsfs, euscript}
\usepackage{bbm}
\usepackage{amssymb}
\usepackage{txfonts}
\usepackage{amscd}
\usepackage{amsfonts,latexsym,amsmath,amsthm,amsxtra,}
\usepackage[bookmarks,colorlinks]{hyperref}
\usepackage[all,cmtip]{xy} 
\usepackage{color}
\usepackage{colordvi}
\usepackage{multicol}


\newcommand{\alg}{{\text{alg}}}

\def\an{\mathrm{an}}
\def\loc{\mathrm{loc}}

    \newcommand{\BA}{{\mathbb {A}}}

     \newcommand{\BN}{{\mathbb {N}}}
     \newcommand{\BP}{{\mathbb {P}}}
    \newcommand{\BQ}{{\mathbb {Q}}}

     \newcommand{\BZ}{{\mathbb {Z}}}

    \newcommand{\CI}{{\mathcal {I}}} 
    \newcommand{\CK}{{\EuScript {K}}} \newcommand{\CL}{{\EuScript {L}}}
     
    \newcommand{\CO}{{\mathcal {O}}} \newcommand{\CP}{{\EuScript {P}}}

    \newcommand{\CU}{{\EuScript {U}}}

    \newcommand{\RG}{{\mathrm {G}}} \newcommand{\RH}{{\mathrm {H}}}

    \newcommand{\RM}{{\mathrm {M}}} 
     \newcommand{\RP}{{\mathrm {P}}}

    \newcommand{\RU}{{\mathrm {U}}}

    \newcommand{\End}{{\mathrm{End}}} 
    \newcommand{\Gal}{{\mathrm{Gal}}} \newcommand{\GL}{{\mathrm{GL}}}
     
    \newcommand{\Ind}{{\mathrm{Ind}}}

    \newcommand{\fo}{{\mathfrak{o}}}

     \newcommand{\rank}{{\mathrm{rank}}}

    \newcommand{\Res}{{\mathrm{Res}}}

\newcommand{\delete}[1]{}

     \newcommand{\SL}{{\mathrm{SL}}}
     
     \newcommand{\Sym}{{\mathrm{Sym}}}

        \newcommand{\Sp}{{\mathrm{Sp}}}

    \newcommand{\ra}{\rightarrow}

    \theoremstyle{plain}

    \newtheorem{thm}{Theorem}[section] \newtheorem{cor}[thm]{Corollary}
    \newtheorem{lem}[thm]{Lemma}  \newtheorem{prop}[thm]{Proposition}
     \newtheorem{defn}[thm]{Definition}
    \newtheorem {rem}[thm]{Remark} 
    

    \numberwithin{equation}{section}

\begin{document}

\title{Morita's Theory for the Symplectic Groups}
\author{Zhi Qi and Chang Yang}
\begin{abstract}
We construct and study the holomorphic discrete series
representation and the principal series representation of the
symplectic group $\Sp(2n,F)$ over a $p$-adic field $F$ as well as a
duality between some sub-representations of these two
representations. The constructions of these two representations
generalize those defined in Morita {and Murase}'s works. Moreover, Morita built a
duality for $\SL(2, F)$ defined by residues. We view the duality  
constructed here as an algebraic interpretation of Morita's duality in some
extent and its generalization to the symplectic groups.
\end{abstract}

\keywords{symplectic groups, $p$-adic Siegel upper half-space,
principal series, holomorphic discrete series, Morita's duality,
Casselman's intertwining operator. }

\maketitle

{\footnotesize
\tableofcontents
}


\section*{Notations}

Let $p$ be a prime, $F$ be a finite extension of $\BQ_p$, $\fo$ be the
ring of integers of $F$, $\varpi$ be a uniformizer of $\fo$, $|\ |$
be the normalized absolute value, and $F^\alg$ be an algebraic closure of
$F$. Let $K$ be an extension of $F$ with an absolute value extending
$|\ |$ such that $K$ is complete under this absolute value. Because the
Hahn-Banach theorem is applied, we assume that $K$ is spherically
complete in \S \ref{sec:Duality} and \S \ref{sec:SL2Morita}.

\addtocontents{toc}{\protect\setcounter{tocdepth}{2}}

\addtocounter{section}{-1}
\section{Introduction}

\subsection*{Backgrounds}
In \cite{MoritaI}, Morita and Murase constructed and studied   
$p$-adic holomorphic discrete series representations of $ \SL(2,
F)$. In \cite{Schneider1992}, Schneider introduced the holomorphic
discrete series of $\SL(n+1, F)$ associated to a rational
representation of $\GL(n, F)$. He showed that, as   $\SL(n+1,
F)$-representation, the space of holomorphic exterior differential
$r$-forms on the Drinfe'ld's space is actually a  holomorphic
discrete series.

Morita started the systematic study of principal series
(parabolic induced representations) of $\SL(2, F)$ in
\cite{MoritaII} and \cite{MoritaIII}. In order to prove the
irreducibility conjectures on holomorphic discrete series,
Morita later constructed a duality pairing via residues between 
holomorphic discrete series and  principal series
(\cite{MoritaII}).



\subsection*{Outline of article}

In the first paragraph, we  generalize Morita's constructions to the
symplectic groups. In \S \ref{sec: the symplectic groups}, we     recollect  some notions on the symplectic
groups. In \S \ref{sec:principal},  following \cite{MoritaII}, we
give another interpretation of a parabolic induced
representation, which is conventionally called a  principal series.
General results of F\'eaux de Lacroix on   induced representations
of   $F$-Lie groups (\cite{Feaux}) are applied for our purpose. In
\S \ref{sec:Siegel},  a $p$-adic analogue of the Siegel
upper half-space, along with an $F$-rigid analytic
structure, is introduced. The method is similar to the one utilized in the study of
Drinfe'ld's space in \cite{Schneider-Stuhler}. In \S
\ref{sec:discrete}, we introduce the notion of the holomorphic
discrete series of $\Sp(2n, F)$ associated to a $K$-rational
representation of $\GL(n, F)$ and prove that the space of rigid
analytic exterior differential $r$-forms on the Siegel upper
half-space can be realized as a holomorphic discrete series.

In the second paragraph, in a purely algebraic way, we construct two
invariant closed subspaces of the principal series and the
holomorphic discrete series respectively and establish a duality
operator between them. We remark that, since the two spaces are of
compact type and nuclear $K$-Fr\'echet, respectively, the duality
fits into the framework of Schneider and Teitelbaum's theory (cf.
\cite{Schneider-Teitelbaum2002}).

In the last paragraph, in the case of $\SL(2, F)$, we analyze the
relations between the duality constructed in the second paragraph
and Morita's duality: composing with Casselman's intertwining
operator defined by differentiation, Morita's duality coincides with ours up to a constant.
\\
\\
\begin{small}
\emph{Acknowledgements.} We are especially grateful to Bingyong Xie.
He guided our researches and communicated many important ideas to
us. We also want to thank Professor P. Schneider for several
comments and advices.
\end{small}

\section{Symplectic groups and their representations}
\label{sec:SymGp}

\subsection{The symplectic group $\Sp(2n,F)$}\label{sec: the symplectic groups}

Let $n$ be a positive integer and $$J_n:=\begin{pmatrix}0 &I_n \\
-I_n & 0\end{pmatrix}. $$ The \emph{symplectic group} $\Sp(2n,F)$ is
the subgroup of $\GL(2n,F)$ that consists of matrices $g$ satisfying $$ {^tg}
J_n g =J_n.$$ If one writes $g=\begin{pmatrix}A &B \\ C &
D\end{pmatrix}$ ($A$, $B$, $C$, $D\in \RM(n, F)$), then $g
\in\Sp(2n,F)$ if and only if either one of the following two conditions
holds:
\begin{align}
& {^tA} D - {^tC}B=I_n,
\hskip 10pt {^tA} C = {^tC} A, \hskip 10pt {^tB}D ={^tD} B ; \label{eq:rel1}\\
& D\ {^tA} - C\ {^tB}=I_n, \hskip 4pt D\ {^tC}=C\ {^tD}, \hskip 2pt
B\ {^tA}=A\ {^tB}. \label{eq:rel2}
\end{align}

In the following, we introduce two homogeneous spaces $\CP(n)$ and $\CL(n)$ associated to $\Sp(2n, F)$.

Let $\CP(n)$ denote  the set of pairs $(X,Y)$, $X, Y \in \RM(n, F)$,
such that $$ X\ {^tY}=Y\ {^tX}, \hskip 10pt \rank{ (X \; Y)}=n.$$ 
A right action of $\Sp(2n,F)$ and a left action of $\GL(n,
F)$ on $\CP(n)$ are defined by
\begin{eqnarray*} & & (X,Y)g := (XA+YC, XB+YD), \hskip 10pt g=\begin{pmatrix}A &B
\\ C & D\end{pmatrix}\in \Sp(2n, F), \\
\nonumber & & h(X,Y) := (hX, hY), \hskip 10pt h\in\GL(n, F),
\end{eqnarray*} respectively. Let
$$\RU:=\left\{\begin{pmatrix} I_n & B \\0 & I_n
\end{pmatrix}\in \Sp(2n,F)\right\}.
$$ 
We identify the $\Sp(2n, F)$-homogeneous space $\RU\backslash \Sp(2n,F)$ with $\CP(n)$ via
$\begin{pmatrix}A &B \\ C & D\end{pmatrix} \mapsto (C, D)$ (the
inverse map comes from the symplectic Gram-Schmidt process).

Let $\CL(n)$ denote  the set of transposed Langrangian subspaces. Define
$$ \RP:=\left\{\begin{pmatrix}{^tD^{-1}} & B \\ 0 &
D\end{pmatrix}\in \Sp(2n,F)\right\}.$$ 
Then   $\CL(n)$ can be identified with   the
$\Sp(2n, F)$-homogeneous space $\RP\backslash \Sp(2n,F)$. Since
$\RP$ is a parabolic subgroup, $\RP\backslash \Sp(2n,F)$ is a smooth
projective variety over $F$.

In view of $\RP\cong \RU\rtimes \GL(n,F)$, we have a natural $\Sp(2n,
F)$-equivariant isomorphism $\GL(n,F)\backslash \CP(n)\cong \CL(n)$;
the projection from $\CP(n)$ onto $\CL(n)$ maps $(X,Y)$ to the
transposed Lagrangian subspace spanned by the row vectors of $(X \;
Y)$.

Finally, we define certain open subsets that define
the coordinates on $\Sp(2n, F)$, $\CP(n)$ and $\CL(n)$.

Let $$U_0 := \left\{ \begin{pmatrix}A &B \\
C & D\end{pmatrix} \in \Sp(2n,F)  :  \det(C) \neq 0 \right\}.$$
We have the following unique decomposition in $\Sp(2n, F)$ for
matrices in $U_0$:
\begin{equation}\label{eq:decompositionofU0}
\begin{pmatrix}A &B \\ C & D\end{pmatrix}
=\begin{pmatrix}I_n & AC^{-1} \\ 0 & I_n\end{pmatrix}\begin{pmatrix}
{^t}{C}{^{-1}} & 0 \\ 0 & C
\end{pmatrix} \begin{pmatrix}0 & -I_n \\
I_n & C^{-1}D
\end{pmatrix}.
\end{equation}
$AC^{-1}$ and $C^{-1}D$ are symmetric ((\ref{eq:rel1}) and
(\ref{eq:rel2})). Thus, one may identify $U_{0}$ with $\Sym(n,F)
\times \GL(n,F) \times \Sym(n,F)$.

Let
$\CU_0$ be the open subset of $\CP(n)$:
$$\big\{(h, hz) : h\in \GL(n,F), z\in \Sym(n,F)\big\}.$$
Under the identification $\CP(n) \cong \RU\backslash \Sp(2n,F)$,  we have $\CU_0 \cong
\RU\backslash U_{0}$.

Furthermore, we 
identify $\Sym(n,F)$ with the open subset $\RP\backslash
U_{0}$ of $\CL(n)$.

To lighten notations, hereafter we let $\RG= \Sp(2n,F)$, $\RG_{\fo} = \Sp(2n,
\fo)$, $\RH= \GL(n,F)$, $\RH_\fo= \GL(n,\fo)$ and abbreviate
$\CP(n)$ and $\CL(n)$ to $\CP$ and $\CL$, respectively.
Moreover, let $\mathrm{pr}^\RG_{\CP}$, $\mathrm{pr}^\RG_{\CL}$ and
$\mathrm{pr}^{\CP}_{\CL}$ denote the canonical projections.

\subsection{$\Ind_{\RP}^{\RG}\sigma$ and the principal series
$(C^{\an}_\sigma(\CP , V), T_{\sigma})$}\label{sec:principal}

Let $(V, \sigma)$ be a \emph{locally analytic representation} (cf.
\cite{Feaux} 3.1.5 and \cite {Schneider-Teitelbaum2002} \S 3) of
$\RH$ on a barreled locally convex Hausdorff $K$-vector space $V$,
which means that the orbit maps are $V$-valued locally analytic
functions; more precisely, for any $v \in V$ there exists a BH-space
$W$ of $V$ (that is, a Banach space $W$ together with a continuous
injection $W \hookrightarrow V$) such that $g \mapsto \sigma(g)v$
expands in a neighborhood of the unit element to a power series
with $W$-coefficients (cf. \cite{Feaux}).

Observe that $\sigma$ extends to a representation of $\RP$ via the projection
$$\RP \rightarrow \RH, \hskip 10pt
\begin{pmatrix}{^tD^{-1}} & B \\ 0 & D\end{pmatrix} \mapsto D.$$ We
consider the \emph{parabolic induced representation}
$\Ind_{\RP}^\RG\sigma$ whose underlying space is the space of
$V$-valued locally analytic functions $f$ on $\RG$ satisfying $$
f(pg)=\sigma(p)f(g), \hskip 10pt \text{for all } g \in \RG, p \in
\RP;
$$
$\RG$ acts by the right translation.

Because the homogeneous space $\CL $ is compact,
$\Ind_{\RP}^\RG\sigma$ is a locally analytic representation of $\RG$
(\cite{Feaux} 4.1.5).

\delete{
\begin{lem}[cf. \cite{Feaux} 4.3.1]
Let $G$ be a finite dimensional $F$-Lie group, $H $ a locally
analytic subgroup of $G$, $\sigma$ a locally analytic representation
of $H$ on a $K$-Banach space $V$, $s: H \backslash G \ra G $ a
locally analytic section. Then
\begin{align*} s^* : \Ind_H^G\sigma &\ra C^\an(H\backslash G, V)\\
f &\mapsto f\circ s,
\end{align*} is an isomorphism.
\end{lem}
}

Next, we give another description of $\Ind_{\RP}^\RG \sigma$. Let
$C^{\an}_\sigma(\CP , V)$ be the space of $V$-valued locally
analytic functions $\varphi$ on $\CP $ satisfying $$\varphi(hX,
hY)=\sigma(h) \varphi(X,Y), \hskip 10pt \text{ for all } (X,Y)\in
\CP \text{ and } h\in \RH.
$$

We define the \emph{principal series representation}
$(C^{\an}_\sigma(\CP , V), T_{\sigma})$ of $\RG$:
\begin{equation}\label{eq:defTsigma}(T_{\sigma}(g) \varphi) (X,
Y):= \varphi((X, Y)g).\end{equation}

\begin{lem} \label{lem:principalseries}~
	
(1) The representation $\Ind_{\RP}^\RG\sigma$
is (naturally) isomorphic to $(C^\an_\sigma(\CP , V), T_{\sigma})$.

(2) $\Ind_{\RP}^\RG\sigma$ is isomorphic to $C^{\an}(\CL , V)$.
\end{lem}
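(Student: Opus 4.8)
The plan is to deduce (1) by exhibiting $\Ind_{\RP}^\RG\sigma$ as the space of left-$\RU$-invariant functions on $\RG$ and pushing them down to $\CP\cong\RU\backslash\RG$, and to deduce (2) by trivialising the induced representation along a locally analytic section of $\CL\cong\RP\backslash\RG$. For (1), I would first observe that $\RU$ lies in the kernel of the projection $\RP\to\RH$, so $\sigma|_{\RU}=\id$ and every $f\in\Ind_{\RP}^\RG\sigma$ is left $\RU$-invariant; hence it factors through $\RU\backslash\RG$. Under the identification $\RU g\mapsto(C,D)$ for $g=\begin{pmatrix}A&B\\C&D\end{pmatrix}$, I set $\Phi(f)=\varphi$ with $\varphi(C,D):=f(g)$. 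Writing $\RP=\RU\rtimes\RH$ with $\RH$ embedded by $h\mapsto m_h:=\begin{pmatrix}{}^th^{-1}&0\\0&h\end{pmatrix}$ (so that $\sigma(m_h)=\sigma(h)$) and using $m_h g=\begin{pmatrix}{}^th^{-1}A&{}^th^{-1}B\\hC&hD\end{pmatrix}$, the defining relation $f(m_h g)=\sigma(h)f(g)$ becomes exactly $\varphi(hC,hD)=\sigma(h)\varphi(C,D)$; thus $\Phi(f)\in C^{\an}_{\sigma}(\CP,V)$.

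Conversely, given $\varphi\in C^{\an}_{\sigma}(\CP,V)$ I would set $f(g):=\varphi(C,D)$; for $p=\begin{pmatrix}{}^tD_0^{-1}&B_0\\0&D_0\end{pmatrix}$ one computes $pg=\begin{pmatrix}*&*\\D_0C&D_0D\end{pmatrix}$, so that $f(pg)=\sigma(D_0)\varphi(C,D)=\sigma(p)f(g)$ and $f\in\Ind_{\RP}^\RG\sigma$. These two assignments are visibly inverse to one another. For the $\RG$-action, right translation $(g_0\cdot f)(g)=f(gg_0)$ transports under $\Phi$ to $\Phi(g_0\cdot f)(X,Y)=\varphi((X,Y)g_0)=(T_{\sigma}(g_0)\varphi)(X,Y)$, because $\RU(gg_0)\mapsto(X,Y)g_0$ for the right $\RG$-action on $\CP$; hence $\Phi$ intertwines right translation with $T_{\sigma}$, proving (1).

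For (2), I would appeal to F\'eaux de Lacroix's description of induced representations (\cite{Feaux}): for any locally analytic section $s\colon\RP\backslash\RG\to\RG$ the restriction $f\mapsto f\circ s$ is a topological isomorphism $\Ind_{\RP}^\RG\sigma\xrightarrow{\sim}C^{\an}(\RP\backslash\RG,V)=C^{\an}(\CL,V)$, this being an isomorphism of locally convex spaces rather than of $\RG$-representations. The only real input is the existence of such a global section. Since $\CL$ is compact and totally disconnected, it admits a finite partition into open-compact charts over each of which the submersion $\RG\to\CL$ splits: the distinguished chart $\RP\backslash U_0\cong\Sym(n,F)$ carries the section read off from \eqref{eq:decompositionofU0}, and its $\RG_{\fo}$-translates cover $\CL$ by such charts. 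Gluing the local sections over this disjoint cover yields a global locally analytic $s$, and the cited isomorphism applies. Alternatively, (2) follows from (1) by descending $\sigma$-equivariant functions along the $\RH$-bundle $\CP\to\CL$ and trivialising it over the same charts.

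The set-theoretic bijections above are formal; the step I expect to be the genuine obstacle is verifying that $\Phi$ and $s^{*}$ are \emph{topological} isomorphisms, i.e. that descent along $\RG\to\RU\backslash\RG$, respectively restriction along $s$, preserves local analyticity and is bicontinuous for the natural locally convex topologies on the two sides (here one must also check that F\'eaux de Lacroix's results, stated for Banach coefficients, extend to the barreled space $V$). This ultimately reduces to the fact that $\RG\to\CP$ and $\RG\to\CL$ are locally analytic submersions admitting local sections, which the explicit coordinates of \S\ref{sec: the symplectic groups} supply, so that the machinery of \cite{Feaux} becomes directly applicable.
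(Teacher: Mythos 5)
Your proposal is correct and follows essentially the same route as the paper: both identify $\Ind_{\RP}^\RG\sigma$ with $\sigma$-equivariant functions on $\CP\cong\RU\backslash\RG$ via left $\RU$-invariance, and both lean on F\'eaux de Lacroix's section-based isomorphism (\cite{Feaux} 4.3.1) for the topological content, with (2) obtained from a locally analytic section of $\mathrm{pr}^\RG_\CL$ glued over the explicit charts. The paper merely compresses your explicit matrix computations into the statement that the restriction of $\bar{\iota}^{\circ}\colon \Ind_{\RU}^\RG\mathbf{1}\simeq C^{\an}(\CP,V)$ to $\Ind_{\RP}^\RG\sigma$ lands in $C^{\an}_\sigma(\CP,V)$, independently of the chosen section.
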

\begin{proof}
(1) From a locally analytic section $\bar{\iota}$ of
$\mathrm{pr}^\RG_{\CP}$, one obtains an isomorphism $\bar{\iota}^\circ:
\Ind_{\RU}^\RG\mathbf{1} \simeq C^{\an} (\CP , V), f\mapsto f\circ
\bar{\iota}$ (\cite{Feaux} 4.3.1). By restriction,
$\bar{\iota}^\circ$ induces an isomorphism between
$\Ind_{\RP}^\RG\sigma$ and $C^{\an}_\sigma(\CP , V)$, which is
independent on the choice of $\bar{\iota}$. $\RG$-equivariance is evident.

(2) A locally analytic section $\tilde{\iota}$ of
$\mathrm{pr}^\RG_{\CL}$ induces an isomorphism $\tilde{\iota}^\circ:
\Ind_{\RP}^\RG\sigma \simeq C^{\an}(\CL , V) $ (ibid.).
\end{proof}

Because $\CL $ is compact, $C^{\an}(\CL , V)$ is of compact type
(\cite{Schneider-Teitelbaum2002} Lemma 2.1). By
\cite{Schneider-Teitelbaum2002} Proposition 1.2, Theorem 1.3 and
\cite{Schneider} Proposition 16.10, we have the following corollary.

\begin{cor}\label{cor:compacttype} Suppose that $ B$ is a closed subspace of
$C^{\an}_\sigma(\CP , V)$. Then both $B $ and $C^{\an}_\sigma(\CP , V)/B$
are of compact type, in particular, they are reflexive,
bornological, and complete; $B^*_b$ and $(C^{\an}_\sigma(\CP ,
V)/B)^*_b$ are nuclear Fr\'echet spaces.
\end{cor}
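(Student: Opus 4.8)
The plan is to reduce the statement to the fact, recorded just above it, that $C^{\an}(\CL , V)$ is of compact type, and then to transport this property first to $C^{\an}_\sigma(\CP , V)$ and afterwards to its closed subspaces and quotients, invoking the standard permanence and structure theorems for spaces of compact type.

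First I would note that $C^{\an}_\sigma(\CP , V)$ is itself of compact type. By Lemma \ref{lem:principalseries} there are topological isomorphisms
\[
C^{\an}_\sigma(\CP , V) \;\cong\; \Ind_{\RP}^\RG \sigma \;\cong\; C^{\an}(\CL , V),
\]
and the right-hand space is of compact type since $\CL $ is compact (\cite{Schneider-Teitelbaum2002} Lemma 2.1). As being of compact type is an invariant of the topological vector space structure, $C^{\an}_\sigma(\CP , V)$ inherits it.

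Next, writing $E := C^{\an}_\sigma(\CP , V)$ and taking any closed subspace $B \subseteq E$, I would invoke the permanence properties of \cite{Schneider-Teitelbaum2002} Proposition 1.2: a closed subspace of a space of compact type, and the Hausdorff quotient of a space of compact type by a closed subspace, are again of compact type. This yields that both $B$ and $E/B$ are of compact type. The remaining assertions are then purely structural: by \cite{Schneider-Teitelbaum2002} Theorem 1.3 a space of compact type is reflexive, bornological and complete, and its strong dual is a nuclear Fr\'echet space (the nuclearity may alternatively be read off from \cite{Schneider} Proposition 16.10). Applying this to $B$ and to $E/B$ gives the stated conclusions about $B^*_b$ and $(E/B)^*_b$.

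The substance of the argument lies entirely in the cited theorems, so I do not expect a genuine obstacle; the only points requiring care are bookkeeping. One must check that the identifications in Lemma \ref{lem:principalseries} are isomorphisms of topological vector spaces, not merely algebraic ones, so that the compact-type property actually transfers, and one must ensure that the running hypotheses on $V$ are such that Lemma 2.1 applies, i.e.\ that $V$ is itself of compact type, which is precisely what makes $C^{\an}(\CL , V)$ of compact type. Granting these, the corollary is a direct consequence of the structure theory of compact type spaces.
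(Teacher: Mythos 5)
Your proposal is correct and follows the paper's own route exactly: the paper likewise deduces the corollary from Lemma \ref{lem:principalseries} (identifying $C^{\an}_\sigma(\CP , V)$ with $C^{\an}(\CL , V)$, which is of compact type by \cite{Schneider-Teitelbaum2002} Lemma 2.1 since $\CL$ is compact), and then applies \cite{Schneider-Teitelbaum2002} Proposition 1.2 and Theorem 1.3 together with \cite{Schneider} Proposition 16.10 for the permanence and structural properties. Your closing caveats (topological nature of the isomorphisms, hypotheses on $V$) are sensible bookkeeping points that the paper leaves implicit.
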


For technical needs, we fix a finite disjoint open covering
$\{\overline{\CU}_{\kappa }\}_{\kappa} $ of $\CL $ satisfying: \\
1. $\Sym(n, \fo) \in\{\overline{\CU}_{\kappa }\}_{\kappa} $; \\
2. each $\overline{\CU}_{\kappa }$ is translated into $\Sym(n, \fo)$
by an element
$g_\kappa$ in $\RG$;\\
3. Let $\CU_{\kappa } :=
(\mathrm{pr}^\CP_\CL)^{-1}(\overline{\CU}_{\kappa })$. We define the
analytic local section $\iota_\kappa: \overline{\CU}_{\kappa }\ra
\CU_{\kappa }$ of $\mathrm{pr}^{\CP}_{\CL}$ to be the
$g^{-1}_{\kappa }$-translation of the section
$$\iota_0: \Sym (n, F) \ra \CU_0, \hskip 10pt  z \mapsto (1,-z). $$
All the $\iota_\kappa$ give rise to  a locally analytic section $\iota$ of
$\mathrm{pr}^{\CP}_{\CL}$.
Define $\CK  :=  \iota(\CL)$. 

If the locally analytic sections $\bar{\iota}$ and $\tilde{\iota}$
in the proof of Lemma \ref{lem:principalseries} are compatible with
$\iota$, in the sense that $\tilde{\iota} = \bar{\iota}\circ \iota$, then
Lemma \ref{lem:principalseries} implies that $\iota$ induces an
isomorphism
\begin{eqnarray}\label{eq:iotacirc} \iota^{\circ} : C^{\an}_\sigma(\CP , V) &\ra& C^{\an}(\CL
, V) \\
\nonumber \varphi &\mapsto & \varphi\circ \iota.
\end{eqnarray}

\delete{For $\eta \in C^{\an}(\CL, V)$, we define a $V$-valued
locally analytic function $\varphi_{\eta }$ on $\CL$:
$$\varphi_{\eta }(h\cdot \iota (z)) := \sigma(h)\eta (z),
 \hskip 10pt h\in \RH, z \in\CL. $$
The inverse of $\iota^*$ is
\begin{align*}
\Phi : C^{\an}(\CL , V) &\ra C^{\an}_\sigma(\CP , V)\\
 \eta &\mapsto \varphi_{\eta}.
\end{align*}
}

\subsection{The $p$-adic Siegel upper half-space}
\label{sec:Siegel}

In this section, we  first define a $p$-adic analogue of the Siegel upper
half-space, which also generalizes the $p$-adic upper half-plane (cf.
\cite{DasTei:Lecture}), and then discuss    some of their basic
properties.


Let $\mathbf{S}$ be the $F$-rigid analytic variety $\mathbf{Sym}(n)$
that is isomorphic to the affine space $ \BA^{{n(n+1)}/{2}}_{/F}$.
{The underlying space of $\mathbf{S}$ is $\Sym(n,
F^\alg)$ (strictly speaking, $\Sym(n,
F^\alg)/\Gal(F^\alg/F)$ (cf. \cite{Bosch1984}), but it is more convenient not to consider the Galois action in our situation).} 

\begin{defn} Let
$$\mathbf{\Sigma} :=\left\{ Z\in \mathbf{S}  : \det(XZ+Y)\neq 0 \text{ for any pair }
(X, Y)\in \CP \right\}.$$ $\mathbf{\Sigma} $ is called the {\bf
$p$-adic Siegel upper half-space}.
\end{defn}

Firstly, we show that $\mathbf{\Sigma}$ is nonempty.

\begin{lem}
If $Z$ is a diagonal matrix in $\mathbf{S}$ whose diagonal entry
$Z_{ii}$ is of absolute value {$|\varpi|^{ 1/ {(n+1)^{k_i}}}$, with distinct positive integers $k_i$}, then $Z\in
\mathbf{\Sigma}$.
\end{lem}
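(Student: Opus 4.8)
The plan is to reduce the non-vanishing of $\det(XZ+Y)$, for an arbitrary pair $(X,Y)\in\CP$, to a comparison of valuations, exploiting that the prescribed absolute values of the diagonal entries are spread out so that no cancellation can occur. Write $Z=\mathrm{diag}(z_1,\dots,z_n)$ and let $v$ denote the additive valuation on $F^\alg$ normalized by $v(\varpi)=1$, so that $v(z_i)=(n+1)^{-k_i}$. Denote by $X_j,Y_j\in F^n$ the columns of $X,Y$; since $Z$ is diagonal the $j$-th column of $XZ+Y$ is $z_jX_j+Y_j$, and multilinearity of the determinant in the columns gives
\begin{equation*}
\det(XZ+Y)=\sum_{S\subseteq\{1,\dots,n\}}\Big(\prod_{j\in S}z_j\Big)\,M_S,
\qquad M_S:=\det\big(C_1^S,\dots,C_n^S\big),
\end{equation*}
where $C_j^S=X_j$ if $j\in S$ and $C_j^S=Y_j$ if $j\notin S$. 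Each $M_S$ lies in $F$, hence $v(M_S)\in\BZ\cup\{\infty\}$.

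First I would isolate the arithmetic core: the monomials attached to distinct subsets have well-separated valuations. For $S\neq S'$ set $D=\sum_{j\in S}(n+1)^{-k_j}-\sum_{j\in S'}(n+1)^{-k_j}=\sum_j\varepsilon_j(n+1)^{-k_j}$ with $\varepsilon_j\in\{-1,0,1\}$ not all zero. Since the $k_j$ are distinct, the term with the smallest index $k_{j_0}$ among the non-zero $\varepsilon_j$ dominates: the remaining at most $n-1$ terms contribute at most $\tfrac{n-1}{n+1}(n+1)^{-k_{j_0}}$, so $|D|\ge\tfrac{2}{n+1}(n+1)^{-k_{j_0}}>0$; on the other hand each of the two sums is a finite partial sum of $\sum_{m\ge1}(n+1)^{-m}=\tfrac1n$, hence lies in $[0,\tfrac1n)\subseteq[0,1)$, so $|D|<1$. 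Thus $D\notin\BZ$. Consequently, for any two subsets with $M_S,M_{S'}\neq0$ the valuations $v\big(\prod_{j\in S}z_j\cdot M_S\big)=\sum_{j\in S}(n+1)^{-k_j}+v(M_S)$ cannot coincide, since equality would force $D=v(M_{S'})-v(M_S)\in\BZ$. So all non-zero terms of the expansion have pairwise distinct valuations, and by the ultrametric inequality the sum is non-zero as soon as at least one $M_S\neq0$.

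The main obstacle is therefore to produce some $M_S\neq0$; this is exactly where the defining condition of $\CP$ (beyond mere full rank) enters, and it genuinely fails for non-symmetric pairs with $\rank(X\;Y)=n$, so the symplectic hypothesis is indispensable. Here I would reformulate $M_S\neq0$ geometrically: letting $L\subseteq F^{2n}$ be the row span of $(X\;Y)$, the relation $X\,{}^tY=Y\,{}^tX$ together with $\rank(X\;Y)=n$ says precisely that $L$ is a Lagrangian subspace for $J_n$, and $M_S\neq0$ means that $L$ is transverse to the coordinate Lagrangian spanned by $\{e_j:j\in S\}\cup\{e_{n+j}:j\notin S\}$, i.e. that one can select one column from each pair $\{X_j,Y_j\}$ so as to obtain an invertible matrix. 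Existence of such an independent transversal follows from Rado's theorem applied to the linear matroid of the columns, once one checks, for every $I\subseteq\{1,\dots,n\}$, that $\rank(X_I\;Y_I)\ge|I|$, where $X_I,Y_I$ collect the columns indexed by $I$. This inequality is supplied by the symplectic structure: the projection of $L$ onto the coordinate symplectic subspace $E_I=\mathrm{span}(e_i,e_{n+i}:i\in I)$ has kernel the complementary symplectic subspace $E_{I^c}$, while $L\cap E_{I^c}$ is isotropic in $E_{I^c}$ and hence of dimension at most $|I^c|=n-|I|$; therefore $\rank(X_I\;Y_I)=\dim\mathrm{pr}_{E_I}(L)=n-\dim(L\cap E_{I^c})\ge|I|$.

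Combining the three steps finishes the argument: some $M_{S_0}\neq0$ by the transversality/Rado step, the corresponding term has strictly smallest valuation among the non-zero ones by the separation estimate, and hence $\det(XZ+Y)\neq0$. Since $(X,Y)\in\CP$ was arbitrary, $Z\in\mathbf{\Sigma}$. The only delicate point is the symplectic combinatorial step producing a non-vanishing minor; the remainder is a routine ultrametric domination, and the base $n+1$ in the exponents is chosen precisely so that the $n-1$ competing terms can never overturn the dominant one.
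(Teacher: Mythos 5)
Your proof is correct, but it takes a genuinely different route from the paper's. The paper argues by normalization: left-multiplying $(X,Y)$ by an element of $\GL(n,F)$ and right-multiplying by a permutation matrix (conjugating $Z$ accordingly), it reduces to $X=\begin{pmatrix}I_r & \widetilde{X}\\ 0&0\end{pmatrix}$ and factors $\det(XZ+Y)=\det\bigl(Z_1+\widetilde{X}Z_2\,{}^t\widetilde{X}+Y_1+Y_2\,{}^t\widetilde{X}\bigr)\det Y_4$, with $Y_4$ invertible by the rank condition; the first factor is visibly a nonzero polynomial in the $Z_{ii}$ of degree at most $n$ in each variable, and the same distinct-absolute-values argument you use then finishes the proof. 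Your multilinear expansion over columns avoids the normalization and factorization entirely, but at the cost that the nonvanishing of some minor $M_S$ is no longer visible by inspection; you supply it with Rado's matroid theorem together with the isotropy count $\dim(L\cap E_{I^c})\leqslant n-|I|$ --- this is where the Lagrangian condition enters for you, whereas in the paper it enters through the identities $Y_3=-Y_4\,{}^t\widetilde{X}$ and the symmetry of $Y_1+Y_2\,{}^t\widetilde{X}$. Each approach buys something: yours isolates exactly what the symplectic hypothesis is needed for (your remark that full rank alone is insufficient is right, e.g. $X_1=Y_1=0$ with the remaining columns spanning $F^n$), and since only multilinear monomials occur, your separation step needs digits in $\{-1,0,1\}$ only, so a base independent of $n$ (say $3$) would already work --- the paper genuinely needs base $n+1$ because its polynomial has degree up to $n$ in each $Z_{ii}$; the paper's approach, on the other hand, is entirely elementary (Gaussian elimination and a block determinant, no matroid theory). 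One cosmetic slip in your write-up: the coordinate Lagrangian spanned by $\{e_j:j\in S\}\cup\{e_{n+j}:j\notin S\}$ is the one whose transversality to $L$ expresses $M_{S^c}\neq 0$, not $M_S\neq 0$; since $S$ ranges over all subsets this is immaterial, and the Rado step itself, which carries the actual weight, is stated and used correctly.
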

\begin{proof} One needs to show that $\det (XZ+Y) $ is non-vanishing for any pair $(X, Y)\in \CP$. Suitably multiplying a matrix
in $\RH$ on the left and a permutation matrix on the right of
$X$ and $Y$, and conjugating $Z$ by the same permutation matrix, we may assume that $X=\begin{pmatrix}I_r & \widetilde{X} \\
0 & 0\end{pmatrix}$, with $r$ being the
rank of $X$. Moreover, we write $Y$ and $Z$ in block matrices $\begin{pmatrix}Y_{1} & Y_{2} \\
Y_{3} & Y_{4}\end{pmatrix}$ and $\begin{pmatrix}Z_1 & 0 \\
0 & Z_2\end{pmatrix}$, respectively. It follows from $X\ {^tY}=Y\ {^tX}$   that $ Y_{3} +
Y_{4}\ {^t\widetilde{X}}=0 $ and $Y_{1}+Y_{2}\ {^t\widetilde{X}} $ is symmetric.
Then
\begin{align*}
XZ+Y &= \begin{pmatrix}Z_1 + Y_1 & \widetilde{X}Z_2+Y_2 \\
-Y_4\ {^t\widetilde{X}} & Y_4\end{pmatrix} \\
&= \begin{pmatrix}I_r & 0 \\
0 & Y_4\end{pmatrix}\begin{pmatrix}Z_1 + Y_1 + \widetilde{X}Z_2\ {^t\widetilde{X}} + Y_2\ {^t\widetilde{X}} & \widetilde{X}Z_2+Y_2 \\
0 & I_{n-r}\end{pmatrix}\begin{pmatrix}I_r & 0 \\
-{^t\widetilde{X}} & I_{n-r}\end{pmatrix}.
\end{align*}
Therefore $$\det (XZ+Y) = \det (Z_1 + \widetilde{X}Z_2\ {^t\widetilde{X}} + Y_1
+ Y_2\ {^t\widetilde{X}})\det Y_4.$$ In view of $\rank (X \; Y)=n$ and $Y_3 = - Y_4
{^t\widetilde{X}}$,   $Y_4$ is invertible, namely $\det Y_4 \neq 0$.
Clearly, the first determinant on the right is a nonzero polynomial
in $Z_{ii}$ with coefficients in $F$, and the degree of $Z_{ii}$ in
each term does not exceed $  n$. By the assumptions on $Z_{ii}$, the terms that
appear  in the polynomial are of distinct absolute values, so the determinant is nonzero. In conclusion, $\det (XZ+Y)
\neq 0$.
\end{proof}

In the following, we  endow $\mathbf{\Sigma} $ with a structure of
$F$-rigid analytic variety and show that $\mathbf{\Sigma} $ is an admissible open
subset of $\mathbf{S}$ and consequently an open rigid analytic
subspace of $\mathbf{S}$ (compare \cite{Schneider-Stuhler} \S 1
Proposition 1).

We define $ \CP_\fo=\mathrm{pr}^\RG_\CP (\RG_{\fo});$ $\CP_\fo$ is
compact. By Iwasawa's decomposition, $\RG=\RP\cdot \RG_{\fo}$, 
$\CP=\RH \cdot \CP_\fo$, and therefore
$$ \mathbf{\Sigma} = \left\{ Z\in \mathbf{S} : \det(XZ+Y)\neq 0 \text{ for any pair } (X, Y)\in
\CP_\fo \right\}.
$$

For $Z\in \mathbf{S}$, let
$$|Z|:=\max_{1\leqslant i\leqslant j\leqslant n}\left\{1, |Z_{ij}|\right\}.$$
For a nonnegative integer $m$ and a pair $(X, Y)\in\CP_\fo $, we
define $$ \mathbf{B}^-(m; X,Y) := \left\{ Z\in\mathbf{S} :
|\det(XZ+Y) | < |Z|^n\ |\varpi|^{nm} \right\}.$$

\begin{lem}\label{lem:CBmXY}
If $m$ is a nonnegative integer and $(X, Y), (X', Y')\in \CP_\fo$
such that $(X, Y)\equiv (h X',h Y')\mod \varpi^{nm+1}$ for some
$h\in \RH_ \fo$, then
$$\mathbf{B}^-(m;X,Y) =\mathbf{B}^-(m; X',Y').$$
\end{lem}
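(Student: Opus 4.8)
The plan is to reduce the claimed equality of subsets of $\mathbf{S}$ to a single perturbation estimate on the determinant $\det(XZ+Y)$ and then to exploit the ultrametric inequality pointwise in $Z$. First I would dispose of the matrix $h$. Since $h\in\RH_\fo=\GL(n,\fo)$ we have $|\det h|=1$, and for every $Z\in\mathbf{S}$ one computes $\det\bigl((hX')Z+hY'\bigr)=\det(h)\det(X'Z+Y')$, whence $|\det((hX')Z+hY')|=|\det(X'Z+Y')|$. Comparing with the defining inequality of $\mathbf{B}^-$ (whose right-hand side $|Z|^n|\varpi|^{nm}$ does not involve $h$) this gives $\mathbf{B}^-(m;hX',hY')=\mathbf{B}^-(m;X',Y')$. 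So it suffices to prove the lemma for the pairs $(X,Y)$ and $(X'',Y''):=(hX',hY')$, which lie in $\CP_\fo$ and satisfy $(X,Y)\equiv(X'',Y'')\bmod\varpi^{nm+1}$ with all entries in $\fo$.

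Next I would make the perturbation estimate precise. Write $X-X''=\varpi^{nm+1}P$ and $Y-Y''=\varpi^{nm+1}Q$ with $P,Q\in\RM(n,\fo)$, and set $M:=X''Z+Y''$ and $N:=(XZ+Y)-M=\varpi^{nm+1}(PZ+Q)$. Recall $|Z|=\max_{i\le j}\{1,|Z_{ij}|\}\ge 1$, so $|Z_{kj}|\le|Z|$ for all $k,j$. Because $X'',Y'',P,Q$ are integral, every entry of $M$ has absolute value $\le|Z|$, while every entry of $N$ has absolute value $\le|\varpi|^{nm+1}|Z|$. Expanding $\det(M+N)$ by the Leibniz formula and subtracting $\det M$, each surviving term involves at least one entry of $N$; a term using $s\ge 1$ entries of $N$ and $n-s$ entries of $M$ is therefore bounded by $|\varpi|^{(nm+1)s}|Z|^{n}\le|\varpi|^{nm+1}|Z|^{n}$. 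By the ultrametric inequality this yields
\[
\bigl|\det(XZ+Y)-\det(X''Z+Y'')\bigr|\;\le\;|\varpi|^{nm+1}|Z|^{n}\;<\;|\varpi|^{nm}|Z|^{n},
\]
the final strict inequality coming from $|\varpi|<1$.

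Finally I would conclude by the ultrametric trichotomy applied pointwise. Fix $Z\in\mathbf{S}$ and put $a=\det(XZ+Y)$, $b=\det(X''Z+Y'')$, $c=|Z|^{n}|\varpi|^{nm}$. The displayed estimate reads $|a-b|<c$, so $|a|<c$ forces $|b|=|b-a+a|\le\max(|b-a|,|a|)<c$, and symmetrically; hence $|a|<c\iff|b|<c$, i.e. $Z\in\mathbf{B}^-(m;X,Y)\iff Z\in\mathbf{B}^-(m;X'',Y'')$. Combined with the reduction of the first paragraph this gives $\mathbf{B}^-(m;X,Y)=\mathbf{B}^-(m;X',Y')$. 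The only genuinely substantive step is the determinant-perturbation estimate of the second paragraph: one must organize the Leibniz expansion together with the entrywise bounds so that the decisive gain $|\varpi|^{nm+1}<|\varpi|^{nm}$ appears and survives the presence of the factor $|Z|^{n}$; the separation of the $h$-factor and the concluding ultrametric argument are formal.
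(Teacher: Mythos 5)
Your proof is correct and takes essentially the same route as the paper's: the same reduction disposing of $h$ via $|\det h|=1$, the same key perturbation estimate $|\det(XZ+Y)-\det(X''Z+Y'')|\le|\varpi|^{nm+1}|Z|^{n}<|\varpi|^{nm}|Z|^{n}$, and the same ultrametric conclusion. The only cosmetic difference is that the paper derives this estimate by rescaling by $\lambda$ with $|\lambda|=|Z|$ so that all entries become integral and then using that congruences mod $\varpi^{nm+1}$ are preserved by the determinant polynomial, whereas you prove the identical bound directly via the Leibniz expansion with entrywise estimates.
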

\begin{proof} Obviously $\mathbf{B}^-(m; X,Y) = \mathbf{B}^-(m; h X,h Y) $.
We may therefore assume that $(X, Y)\equiv (X', Y')\mod \varpi^{nm+1}$.

We choose $\lambda\in (F^\alg)^\times$ such that $|\lambda|=|Z|$. With the observations that $|\lambda^{-1} |\leqslant 1$ and $|\lambda^{-1} Z_{ij}|\leqslant 1$,
one has
\begin{align*}&  X\cdot\lambda^{-1}Z+ Y\cdot\lambda^{-1} \equiv X'\cdot\lambda^{-1}Z+Y'\cdot\lambda^{-1} \mod \varpi^{nm+1},\\
& \  \det(X Z+Y ) \cdot \lambda^{-n} \equiv \det(X'Z+Y'
)\cdot\lambda^{-n} \mod \varpi^{nm+1},
\end{align*}
whence $$ |\det(XZ+Y)|\ |Z|^{-n} <|\varpi|^{nm} \Leftrightarrow
|\det(X'Z+Y')|\ |Z|^{-n} <|\varpi|^{nm}.$$ Therefore
$\mathbf{B}^-(m;X,Y) =\mathbf{B}^-(m; X',Y') $.
\end{proof}

Define
$$\mathbf{\Sigma}(m; X,Y) :=\mathbf{S} -  \mathbf{B}^-(m; X,Y) = \left\{ Z \in \mathbf{S} :
|\det(XZ+Y)| \geqslant  |Z|^n\ |\varpi|^{nm} \right\}.$$

Let
\begin{align*} \mathbf{\Sigma} (m) &:= \bigcap_{(X, Y)\in\CP_\fo} \mathbf{\Sigma} (m; X, Y) \\
&=  \left\{Z\in \mathbf{S} :
\left|\frac{\varpi^{nm}}{\det(XZ+Y)}\right|\leqslant 1,
\left|\frac{\varpi^{nm}Z_{i j}^n}{\det(XZ+Y)}\right|\leqslant 1 \text{
for any } (X, Y)\in\CP_\fo \right\}.
\end{align*}

{Let $\CP^{(m)}$ be any finite subset of $\CP_\fo$ containing $(0, I_n)$ as well as a set of representatives in
$\CP_\fo$ for $\RH_ \fo \backslash \CP_\fo\ (\mathrm{mod}\ 
\varpi^{nm+1})$. Then Lemma \ref{lem:CBmXY} implies that
$$\mathbf{\Sigma}(m) = \bigcap_{(X, Y)\in\CP^{(m)}} \mathbf{\Sigma} (m; X, Y).$$
Denote $\CP^{(m)}_0 = \CP^{(m)} - \{(0, I_n)\}$.}

Observe that 
\begin{align*}\mathbf{\Sigma}(m; 0,I_n)&=\left\{Z\in \mathbf{S}  
:  |Z_{ij}|\leqslant |\varpi|^{-m}, 1\leqslant i \leqslant j \leqslant n\right\}\\
&=\mathrm{Sp}\left(F\left\langle \varpi^{m}Z_{i j} :  1\leqslant i
\leqslant j \leqslant n \right\rangle\right),
\end{align*} is an admissible open affinoid subset of
$\mathbf{S}$. Thus, $\mathbf{\Sigma}(m)$ is the intersection of a finite
number of rational sub-domains of $\mathbf{\Sigma}(m; 0,I_n) $:
$$\left\{Z\in \mathbf{\Sigma}(m; 0,I_n) :
\left|\frac{\varpi^{nm}}{\det(XZ+Y)}\right|\leqslant 1,
\left|\frac{\varpi^{nm}Z_{i j}^n}{\det(XZ+Y)}\right|\leqslant 1 \right\},
$$ for all $(X, Y) \in \CP^{(m)}_0$. Therefore $\mathbf{\Sigma}(m) $ is the affinoid variety:
\begin{equation}\label{eq:Omegam}  \mathrm{Sp}\left(F
\left\langle \varpi^{m} Z_{ij} \right\rangle \left\langle
\frac{\varpi^{nm}}{\det(XZ+Y)}, \frac{\varpi^{nm}
Z_{ij}^n}{\det(XZ+Y)}  : (X, Y)\in \CP^{(m)}_0
\right\rangle\right).\end{equation}

Now, $\{\mathbf{\Sigma}(m)\}_{m=0}^{\infty}$ forms an admissible affinoid
covering of $\mathbf{\Sigma}$. This gives rise to a   rigid
analytic variety structure on $\mathbf{\Sigma}$ (see \cite{Bosch1984} 9.3). According to
\cite{Bosch1984} 9.1.2 Lemma 3 (compare \cite{Bosch1984} 9.1.4
Proposition 2), the following Proposition implies that
$\mathbf{\Sigma}$ is an admissible open subset of $\mathbf{S}$.

\begin{prop}\label{prop:Omegaadmissible}
Any morphism from an affinoid variety to $\mathbf{S}$ with image in
$\mathbf{\Sigma}$ factors through some $\mathbf{\Sigma}(m)$.
\end{prop}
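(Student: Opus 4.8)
The plan is to verify the factorization criterion directly on the coordinate ring of the source affinoid, using the compactness of $\CP_\fo$ to upgrade a pointwise estimate to a uniform one. Write $\mathbf{T}=\mathrm{Sp}(A)$ for the source and let $\varphi\colon\mathbf{T}\to\mathbf{S}$ be a morphism with image in $\mathbf{\Sigma}$. Since $\mathbf{S}\cong\BA^{n(n+1)/2}_{/F}$, the morphism is determined by the pull-backs $z_{ij}:=\varphi^{*}(Z_{ij})\in A$ ($1\le i\le j\le n$); set $z=(z_{ij})$ and, for $(X,Y)\in\CP_\fo$, put $d_{X,Y}:=\det(Xz+Y)\in A$, writing $z(t)$, $d_{X,Y}(t)$ for the values at a point $t\in\mathbf{T}$ and $|z(t)|=\max\{1,\max_{ij}|z_{ij}(t)|\}$ as in the text. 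Because $\mathbf{\Sigma}(m)$ was realized in \eqref{eq:Omegam} as a finite intersection of rational subdomains of the ball $\mathbf{\Sigma}(m;0,I_n)$, the universal property of affinoid subdomains (cf.\ \cite{Bosch1984}) reduces everything to producing a single $m$ with
\[
|d_{X,Y}(t)|\;\ge\;|\varpi|^{nm}\,|z(t)|^{n}\qquad\text{for all }(X,Y)\in\CP_\fo,\ t\in\mathbf{T}.
\]

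First I would bound the coordinates: each $z_{ij}\in A$ has finite spectral seminorm, so there is an integer $m_0\ge 0$ with $|z_{ij}|_{\sup}\le|\varpi|^{-m_0}$ for all $i\le j$; hence $|z(t)|^{n}\le|\varpi|^{-nm_0}$ for every $t$, and $\varphi$ already factors through $\mathbf{\Sigma}(m;0,I_n)$ whenever $m\ge m_0$. Next, fix $(X,Y)\in\CP_\fo$. The hypothesis that the image of $\varphi$ lies in $\mathbf{\Sigma}$ means precisely that $d_{X,Y}(t)\neq 0$ at every point $t$; since $A$ is affinoid, hence Jacobson, a function vanishing at no maximal ideal is a unit, so $d_{X,Y}\in A^{\times}$ and $|d_{X,Y}(t)|\ge|d_{X,Y}^{-1}|_{\sup}^{-1}>0$ for all $t$.

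The crux is to make this lower bound uniform in $(X,Y)$, and here the compactness of $\CP_\fo$ is indispensable. I would fix a Banach algebra norm $\|\cdot\|$ on $A$ and observe that $(X,Y)\mapsto d_{X,Y}$ is continuous from $\CP_\fo$ (with its topology as a compact subset of matrix space, with entries in $\fo$) into $A$: indeed $d_{X,Y}=\det(Xz+Y)$ is a fixed polynomial in the entries of $Xz+Y$, which depend $A$-affine-linearly on the bounded scalars $X_{ik},Y_{ij}\in\fo$, and multiplication in $A$ is continuous. As inversion is continuous on the open set $A^{\times}$ and, by the previous step, $\CP_\fo$ maps entirely into $A^{\times}$, the composite $(X,Y)\mapsto|d_{X,Y}^{-1}|_{\sup}$ is a continuous real-valued function on the compact set $\CP_\fo$ (continuity being measured through $|\cdot|_{\sup}\le\|\cdot\|$). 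Let $M<\infty$ be its maximum; then $|d_{X,Y}(t)|\ge 1/M=:\varepsilon>0$ uniformly in $(X,Y)$ and $t$. Finally I choose $m\ge m_0$ with $|\varpi|^{n(m-m_0)}\le\varepsilon$, so that $|d_{X,Y}(t)|\ge\varepsilon\ge|\varpi|^{nm}|\varpi|^{-nm_0}\ge|\varpi|^{nm}|z(t)|^{n}$ for all $(X,Y)$ and $t$, giving the displayed inequality and hence the factorization through $\mathbf{\Sigma}(m)$.

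The main obstacle is exactly this uniformity step: pointwise non-vanishing of $d_{X,Y}$ for each individual $(X,Y)$ only yields an $m$ depending on $(X,Y)$, and without a uniform lower bound the morphism might fail to land in any single $\mathbf{\Sigma}(m)$. The resolution combines the continuity of $(X,Y)\mapsto d_{X,Y}^{-1}$ in the Banach algebra $A$ with the compactness of $\CP_\fo$ (which itself rests on the Iwasawa decomposition $\CP=\RH\cdot\CP_\fo$). One should only take care that continuity is measured in a Banach norm while the final inequalities are spectral, passing between them via $|\cdot|_{\sup}\le\|\cdot\|$.
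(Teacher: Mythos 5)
Your proof is correct, and it shares the paper's overall skeleton---a per-pair estimate on the affinoid source followed by an appeal to the compactness of $\CP_\fo$---but the mechanism you use to make the estimate uniform in $(X,Y)$ is genuinely different. The paper fixes $(X,Y)$, observes that $1/\det(X\phi(\cdot)+Y)$ and $(\phi(\cdot))_{ij}^n/\det(X\phi(\cdot)+Y)$ are rigid analytic on the source (your Jacobson/unit observation in disguise), bounds them via the maximum modulus principle to get $\phi(\mathbf{X})\subset\mathbf{\Sigma}(m_{(X,Y)};X,Y)$, and then invokes Lemma~\ref{lem:CBmXY}: since $\mathbf{\Sigma}(m;X,Y)$ depends on $(X,Y)$ only through its class modulo $\varpi^{nm+1}$ (up to $\RH_\fo$-translation), the exponent $m_{(X,Y)}$ may be taken locally constant, and compactness finishes. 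You instead work entirely upstairs in the Banach algebra $A$: the map $(X,Y)\mapsto d_{X,Y}^{-1}$ is continuous because inversion is continuous on the open set $A^{\times}$, so $|d_{X,Y}^{-1}|_{\sup}$ is bounded on the compact set $\CP_\fo$, yielding a single $\varepsilon$ and hence a single $m$. Your route buys independence from Lemma~\ref{lem:CBmXY}, using only soft Banach-algebra facts (units form an open set, inversion is continuous, $|\cdot|_{\sup}\le\|\cdot\|$), at the cost of the norm-versus-spectral-seminorm care you rightly flag; the paper's route reuses a congruence lemma it needs anyway (to realize $\mathbf{\Sigma}(m)$ as the finite intersection (\ref{eq:Omegam})) and stays at the level of the domains, never fixing a norm on the source algebra. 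Two minor remarks: the Jacobson property is not actually needed for your unit argument, since in any commutative ring an element lying in no maximal ideal is a unit; and your separate treatment of the coordinate bound $m_0$ and the determinant bound $\varepsilon$ is a clean repackaging of the paper's simultaneous bound on both families of functions.
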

\begin{proof}
The argument is similar to the third proof of
\cite{Schneider-Stuhler} \S 1 Proposition 1.

Let $\mathbf{X}$ be an affinoid variety and $\phi:
\mathbf{X}\rightarrow \mathbf{S}$ be a morphism from $\mathbf{X}$ to
$\mathbf{S}$ with image in $\mathbf{\Sigma}$. For any $(X,
Y)\in\CP_\fo$,  
$$x \mapsto \frac{1}{\det(X\phi(x)+Y)}, \hskip 10pt
x \mapsto \frac{(\phi(x))_{i j}^n}{\det(X\phi(x)+Y)}$$ are $F$-rigid
analytic functions on $\mathbf{X}$. By the maximum modulus principle
(\cite{Bosch1984} \S 6.2 Proposition 4 (i)), there exists a positive
integer $m_{ (X, Y)}$ such that
$$\max_{1\leqslant i\leqslant j\leqslant n}  \max_{x\in \mathbf{X}}\left\{
 \left|\frac{1}{\det(X\phi(x)+Y)}\right|, \left|\frac{(\phi(x))_{i j}^n}
 {\det(X\phi(x)+Y)}\right|\right\} \leqslant |\varpi|^{-n m_{(X, Y)}}.$$
{In other words}, $\phi(\mathbf{X})\subset \mathbf{\Sigma}(m_{ (X, Y)}; X,
Y)$. In view of Lemma \ref{lem:CBmXY}, one can choose $m_{(X,Y)}$ to be  locally constant. Therefore, there exists a
positive integer $m$ such that $\phi(\mathbf{X}) \subset
\mathbf{\Sigma}(m)$ due to  the compactness of    $\CP_\fo$.
\end{proof}

Let $\mathscr{O}(\mathbf{\Sigma}(m))$ denote the space of $F$-rigid
analytic functions on $\mathbf{\Sigma}(m)$; it is an $F$-affinoid
algebra with the supremum norm. From (\ref{eq:Omegam}) one sees  that
$\psi \in \mathscr{O}(\mathbf{\Sigma}(m))$ admits an expansion in the
form:
\begin{equation}\label{eq:psiZexpansion} \psi (Z) = \sum _{(k_{(X, Y)})\in
\BN^{\CP^{(m)}}} P_{(k_{(X, Y)})}(Z)\prod_{(X, Y)\in \CP^{(m)}}
\det(XZ+Y)^{-k_{(X, Y)}},\end{equation} where $\BN$ denotes the
set of nonnegative integers, $P_{(k_{(X, Y)})}(Z)$ are polynomials
in $Z_{ij}$ with $F$-coefficients, and the expansion converges
with respect to the supremum norm $\|\
\|_{\mathscr{O}(\mathbf{\Sigma}(m))}$. In particular,
$\det(XZ+Y)^{-1} \in \mathscr{O}(\mathbf{\Sigma}(m))$ for any $(X,
Y) \in \CP$. Let $\mathscr{O}(\mathbf{\Sigma})$ be the $F$-algebra
of $F$-rigid analytic functions on $\mathbf{\Sigma}$, which is the
projective limit of $\mathscr{O}(\mathbf{\Sigma}(m))$,
$$\mathscr{O}(\mathbf{\Sigma}) :=\underset{m}{\varprojlim}\mathscr{O}(\mathbf{\Sigma}(m)).
$$ We endow $\mathscr{O}(\mathbf{\Sigma})$ with the projective limit topology.

Let $\mathscr{O}_K(\mathbf{\Sigma}(m))$ and
$\mathscr{O}_K(\mathbf{\Sigma})$ denote
$\mathscr{O}(\mathbf{\Sigma}(m))\underset{F}{\otimes}  K $ and
$\mathscr{O}(\mathbf{\Sigma})\underset{F}{\otimes} K$, respectively.
If one let $\mathbf{\Sigma}_K(m)$ and $\mathbf{\Sigma}_K $ denote the
extension of the ground field $K/F$ of $\mathbf{\Sigma} (m)$ and
$\mathbf{\Sigma} $, respectively (\cite{Bosch1984} \S 9.3.6), then
$\mathscr{O}_K(\mathbf{\Sigma}(m))$ and
$\mathscr{O}_K(\mathbf{\Sigma})$ are the $K$-rigid analytic
functions on $\mathbf{\Sigma}_K(m)$ and $\mathbf{\Sigma}_K$,
respectively.

\begin{prop} \label{prop:OOmega}
$\mathscr{O}_K(\mathbf{\Sigma})$ is a nuclear $K$-Fr\'echet space.
\end{prop}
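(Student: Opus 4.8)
The plan is to exhibit $\mathscr{O}_K(\mathbf{\Sigma})$ as a countable projective limit of $K$-Banach spaces with compact transition maps, and then to invoke the standard characterization of nuclear Fr\'echet spaces. Since $\{\mathbf{\Sigma}(m)\}_{m\geqslant 0}$ is an admissible affinoid covering with $\mathbf{\Sigma}(m)\subset \mathbf{\Sigma}(m+1)$, each $\mathscr{O}_K(\mathbf{\Sigma}(m))$ is a $K$-affinoid algebra, hence a $K$-Banach space under its supremum norm, and $\mathscr{O}_K(\mathbf{\Sigma})=\varprojlim_m \mathscr{O}_K(\mathbf{\Sigma}(m))$ carries the projective limit topology. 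A countable projective limit of $K$-Banach spaces is metrizable and complete, so $\mathscr{O}_K(\mathbf{\Sigma})$ is already a $K$-Fr\'echet space. By the characterization of nuclear Fr\'echet spaces as those isomorphic to a projective limit of a sequence of Banach spaces with compact linking maps (\cite{Schneider}), it remains only to prove that each restriction map $\mathscr{O}_K(\mathbf{\Sigma}(m+1))\to \mathscr{O}_K(\mathbf{\Sigma}(m))$ is compact.

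The key point, and the main obstacle, is the compactness of these transition maps, which I would deduce from the fact that $\mathbf{\Sigma}(m)\hookrightarrow \mathbf{\Sigma}(m+1)$ is a \emph{relatively compact} (inner) inclusion of affinoid varieties. By (\ref{eq:Omegam}) applied at level $m+1$, the affinoid $\mathbf{\Sigma}(m+1)$ is cut out inside a unit polydisc by the coordinate functions $\varpi^{m+1}Z_{ij}$, $\varpi^{n(m+1)}/\det(XZ+Y)$ and $\varpi^{n(m+1)}Z_{ij}^n/\det(XZ+Y)$, for $(X,Y)\in \CP^{(m+1)}_0$, each of supremum norm $\leqslant 1$ on $\mathbf{\Sigma}(m+1)$. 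The crucial observation is that, restricted to the smaller affinoid $\mathbf{\Sigma}(m)$, every one of these functions has supremum norm strictly less than $1$. Indeed, on $\mathbf{\Sigma}(m)$ one has $|Z_{ij}|\leqslant |\varpi|^{-m}$, $|Z|\geqslant 1$, and $|\det(XZ+Y)|\geqslant |Z|^n|\varpi|^{nm}\geqslant |\varpi|^{nm}$ for every $(X,Y)\in \CP_\fo$; writing $\|\cdot\|_m$ for the supremum norm on $\mathbf{\Sigma}(m)$, these bounds give
\begin{align*}
\|\varpi^{m+1}Z_{ij}\|_m &\leqslant |\varpi|^{m+1}\,|\varpi|^{-m}=|\varpi|,\\
\left\|\frac{\varpi^{n(m+1)}}{\det(XZ+Y)}\right\|_m &\leqslant |\varpi|^{n(m+1)}\,|\varpi|^{-nm}=|\varpi|^{n},\\
\left\|\frac{\varpi^{n(m+1)}Z_{ij}^n}{\det(XZ+Y)}\right\|_m &\leqslant |\varpi|^{n(m+1)}\,|\varpi|^{-nm}=|\varpi|^{n}.
\end{align*}
Thus all the defining coordinates of $\mathbf{\Sigma}(m+1)$ restrict to functions of norm $\leqslant |\varpi|<1$ on $\mathbf{\Sigma}(m)$, which is exactly the condition $\mathbf{\Sigma}(m)\Subset \mathbf{\Sigma}(m+1)$.

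Granting this, the restriction map on the corresponding affinoid algebras is compact: expanding an element of $\mathscr{O}(\mathbf{\Sigma}(m+1))$ in the orthogonal family of monomials in the coordinate functions, the supremum norms of these monomials on $\mathbf{\Sigma}(m)$ decay geometrically (by a factor $\leqslant |\varpi|$ per degree, exactly as in the expansion (\ref{eq:psiZexpansion})), so the map is a uniform limit of finite-rank maps. This is the same mechanism as for nested polydiscs of strictly smaller radius and is the method of \cite{Schneider-Stuhler} \S 1. The estimates are insensitive to the base change $K/F$, since the same coordinate functions present $\mathbf{\Sigma}_K(m+1)$ and satisfy the same norm bounds on $\mathbf{\Sigma}_K(m)$; hence $\mathscr{O}_K(\mathbf{\Sigma}(m+1))\to \mathscr{O}_K(\mathbf{\Sigma}(m))$ is compact for every $m$, and therefore $\mathscr{O}_K(\mathbf{\Sigma})$ is a nuclear $K$-Fr\'echet space. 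The only delicate step is the verification of inner-ness, that is, the three norm estimates displayed above; once these are in hand the functional-analytic conclusion is formal.
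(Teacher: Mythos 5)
Your proposal is correct and follows essentially the same route as the paper: reduce nuclearity to compactness of the transition maps $\mathscr{O}_K(\mathbf{\Sigma}(m+1))\to\mathscr{O}_K(\mathbf{\Sigma}(m))$ and verify that the affinoid generators of the larger algebra have supremum norm $\leqslant|\varpi|<1$ on the smaller domain, which are exactly the paper's estimates (shifted by one index). The only cosmetic difference is that the paper quotes \cite{Schneider-Teitelbaum2002BV} Lemma 1.5 for the passage from these norm bounds to compactness, whereas you sketch that lemma's proof (geometric decay of monomials, uniform limit of finite-rank maps) directly.
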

\begin{proof}
According to \cite {Schneider} Corollary 16.6 and Proposition 19.9, it
suffices to prove that $\mathscr{O}_K(\mathbf{\Sigma}(m))$ form a
compact projective system.

Observe that $\mathscr{O}_K(\mathbf{\Sigma}(m))$ is generated by
\begin{equation}\label{eq:generators}\varpi^{m} Z_{ij}, \frac{\varpi^{nm}}{\det(XZ+Y)},
\frac{\varpi^{nm} Z_{ij}^n}{\det(XZ+Y)}, \hskip 10pt 1\leqslant i\leqslant 
j\leqslant n, (X, Y)\in \CP_0^{(m)}.\end{equation} Since
$$\sup_{Z\in \mathbf{\Sigma}(m-1)} \sup_{ \scriptstyle (X, Y)\in
\CP_0^{(m)}\atop \scriptstyle 1\leqslant i \leqslant j \leqslant n } \left\{
|\varpi^{m} Z_{ij}|, \left|\frac{\varpi^{nm}}{\det(XZ+Y)} \right|,
\left| \frac{\varpi^{nm} Z_{ij}^n}{\det(XZ+Y)}\right| \right\}\leqslant 
|\varpi|.$$ By \cite {Schneider-Teitelbaum2002BV} Lemma 1.5, the
transition homomorphism from $\mathscr{O}_K (\mathbf{\Sigma}(m))$ to
$ \mathscr{O}_K (\mathbf{\Sigma}(m-1))$ is compact.
\end{proof}

Clearly a compact projective system passes to closed subspaces.
Moreover, a $K$-Fr\'echet space is the strong dual of a space of
compact type if and only if it is nuclear
(\cite{Schneider-Teitelbaum2002} Theorem 1.3).

\begin{cor}
Let $\mathscr{N}$ be a closed subspace of
$\mathscr{O}_K(\mathbf{\Sigma})$, then $\mathscr{N}$ is a nuclear
Fr\'echet space; $\mathscr{N}^*_b$ is of compact type.
\end{cor}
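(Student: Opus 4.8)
The plan is to deduce both assertions from the two facts assembled immediately above the statement: that Proposition \ref{prop:OOmega} realizes $\mathscr{O}_K(\mathbf{\Sigma})$ as the projective limit $\varprojlim_m \mathscr{O}_K(\mathbf{\Sigma}(m))$ of a \emph{compact} projective system of $K$-Banach spaces, and that the Schneider--Teitelbaum duality (\cite{Schneider-Teitelbaum2002} Theorem 1.3) identifies nuclear $K$-Fr\'echet spaces with strong duals of spaces of compact type. First I would note that $\mathscr{N}$, being a closed subspace of the Fr\'echet space $\mathscr{O}_K(\mathbf{\Sigma})$, inherits a complete metrizable locally convex structure and is therefore itself $K$-Fr\'echet.

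To obtain nuclearity I would transport the compact projective system onto $\mathscr{N}$. Writing $p_m : \mathscr{O}_K(\mathbf{\Sigma}) \to \mathscr{O}_K(\mathbf{\Sigma}(m))$ for the canonical projections and $\phi_m : \mathscr{O}_K(\mathbf{\Sigma}(m)) \to \mathscr{O}_K(\mathbf{\Sigma}(m-1))$ for the transition maps, which are compact by the proof of Proposition \ref{prop:OOmega}, I would set $\mathscr{N}_m := \overline{p_m(\mathscr{N})}$, the closure being taken inside $\mathscr{O}_K(\mathbf{\Sigma}(m))$. The maps $\phi_m$ then restrict to $\mathscr{N}_m \to \mathscr{N}_{m-1}$; since the restriction of a compact map to a subspace remains compact, $\{\mathscr{N}_m\}_m$ is again a compact projective system, and its limit is canonically $\mathscr{N}$ precisely because $\mathscr{N}$ is closed. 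This is the content of the remark that ``a compact projective system passes to closed subspaces''. Invoking \cite{Schneider} Corollary 16.6 and Proposition 19.9, exactly as in the proof of Proposition \ref{prop:OOmega}, I conclude that $\mathscr{N}$ is a nuclear $K$-Fr\'echet space.

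For the dual statement I would apply \cite{Schneider-Teitelbaum2002} Theorem 1.3 in the reverse direction: since $\mathscr{N}$ is nuclear Fr\'echet, it is the strong dual $W^*_b$ of some space $W$ of compact type, and by the reflexivity of spaces of compact type (\cite{Schneider-Teitelbaum2002} Proposition 1.2) one has $\mathscr{N}^*_b \cong (W^*_b)^*_b \cong W$, which is of compact type.

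The only genuinely delicate point is the projective-limit step: one must check that passing to the closures $\overline{p_m(\mathscr{N})}$ and restricting the $\phi_m$ preserves both the compactness of the system and the identification $\varprojlim_m \mathscr{N}_m \cong \mathscr{N}$. Both are elementary facts of non-archimedean functional analysis (a closed subspace of a Fr\'echet projective limit is the limit of the closures of its projections, and compactness is stable under restriction), so once Proposition \ref{prop:OOmega} is in hand the corollary is essentially formal.
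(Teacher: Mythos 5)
Your proposal is correct and takes essentially the same route as the paper: the paper's proof consists precisely of the two observations you elaborate, namely that a compact projective system passes to closed subspaces (whence $\mathscr{N}$ is nuclear Fr\'echet by the same appeal to \cite{Schneider} Corollary 16.6 and Proposition 19.9 as in Proposition \ref{prop:OOmega}), and that the Schneider--Teitelbaum duality (\cite{Schneider-Teitelbaum2002} Theorem 1.3, combined with reflexivity of spaces of compact type) then identifies $\mathscr{N}^*_b$ as a space of compact type. Your explicit construction of the induced system $\mathscr{N}_m = \overline{p_m(\mathscr{N})}$ and the verification that $\varprojlim_m \mathscr{N}_m \cong \mathscr{N}$ simply fills in the step the paper dismisses with ``clearly.''
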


\begin{rem}
If $K$ is spherically complete, then Theorem 1.3 and Proposition 1.2 in
\cite{Schneider-Teitelbaum2002} imply that
$\mathscr{O}_K(\mathbf{\Sigma})/\mathscr{N}$ is also a nuclear
Fr\'echet space.
\end{rem}

All the generators (\ref{eq:generators}) of
$\mathscr{O}(\mathbf{\Sigma}(m))$ are $F$-rigid analytic functions
on $\mathbf{\Sigma}(m')$, for any $m' \geqslant m$, and therefore on $\mathbf{\Sigma}$ as well. Then we obtain the following proposition.

\delete{Moreover, observing that $\mathbf{\Sigma}(m-1)$ is a
Weierstra\ss \ domain of $\mathbf{\Sigma}(m)$ (if we take
$\CP^{(m-1)} = \CP^{(m)}$), the image of
$\mathscr{O}(\mathbf{\Sigma}(m))$ under the transition homomorphism
in $\mathscr{O}(\mathbf{\Sigma}(m-1))$ is dense, and therefore}

\begin{prop} \label{prop:Stein} \ 
	
(1) $\mathbf{\Sigma}$ is a Stein space (cf. \cite{Kiehl}), that is, the
image of $\mathscr{O}(\mathbf{\Sigma}(m + 1))$ under the transition
homomorphism in $\mathscr{O}(\mathbf{\Sigma}(m))$ is dense for any
nonnegative integer $m$.

(2) The image of $\mathscr{O}(\mathbf{\Sigma})$ under the transition
homomorphism in $\mathscr{O}(\mathbf{\Sigma}(m))$ is dense.
\end{prop}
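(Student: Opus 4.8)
The plan is to deduce both statements at once from the explicit series expansion (\ref{eq:psiZexpansion}), by exhibiting a dense family of approximants that are manifestly \emph{global}, i.e. already defined on all of $\mathbf{\Sigma}$ rather than only on a single $\mathbf{\Sigma}(m)$.

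First I would record the key point that every building block occurring in (\ref{eq:psiZexpansion}) lies in $\mathscr{O}(\mathbf{\Sigma})$. The coordinate functions $Z_{ij}$, being polynomials, belong to $\mathscr{O}(\mathbf{\Sigma}(m'))$ for every $m'$ and hence to the projective limit $\mathscr{O}(\mathbf{\Sigma})$. For the denominators, fix $(X,Y)\in\CP_\fo$; since $|Z|\geqslant 1$ by definition, the defining inequality of $\mathbf{\Sigma}(m')$ gives $|\det(XZ+Y)|\geqslant |Z|^n|\varpi|^{nm'}\geqslant |\varpi|^{nm'}>0$ throughout $\mathbf{\Sigma}(m')$, so $\det(XZ+Y)$ is bounded away from $0$ and is therefore a unit of the affinoid algebra $\mathscr{O}(\mathbf{\Sigma}(m'))$. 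As the transition maps carry $\det(XZ+Y)^{-1}$ to $\det(XZ+Y)^{-1}$, these inverses form a compatible family and define an element of $\mathscr{O}(\mathbf{\Sigma})$. Consequently any finite $F$-linear combination of products $P(Z)\prod_{(X,Y)}\det(XZ+Y)^{-k_{(X,Y)}}$ with $(X,Y)\in\CP_\fo$ is a single element of $\mathscr{O}(\mathbf{\Sigma})$ whose image in each $\mathscr{O}(\mathbf{\Sigma}(m))$ is again that same combination.

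With this in hand both parts are immediate. Given $\psi\in\mathscr{O}(\mathbf{\Sigma}(m))$, the expansion (\ref{eq:psiZexpansion}) (whose indices $(X,Y)$ run through $\CP^{(m)}\subseteq\CP_\fo$) exhibits $\psi$ as the $\|\ \|_{\mathscr{O}(\mathbf{\Sigma}(m))}$-limit of its finite partial sums $S_N$. By the previous paragraph each $S_N$ lies in $\mathscr{O}(\mathbf{\Sigma})$, and a fortiori in $\mathscr{O}(\mathbf{\Sigma}(m+1))$, and its image under the relevant transition homomorphism into $\mathscr{O}(\mathbf{\Sigma}(m))$ is $S_N$ itself. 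Hence the $S_N$ belong simultaneously to the image of $\mathscr{O}(\mathbf{\Sigma}(m+1))$ and to the image of $\mathscr{O}(\mathbf{\Sigma})$; since $S_N\to\psi$, both images are dense, which is exactly (1) and (2).

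There is no serious obstacle once (\ref{eq:psiZexpansion}) is available; the only point needing care is the verification that each approximant is genuinely defined on the \emph{whole} tower $\{\mathbf{\Sigma}(m')\}_{m'}$, i.e. the invertibility of $\det(XZ+Y)$ at every level, which is what the uniform lower bound $|\det(XZ+Y)|\geqslant |\varpi|^{nm'}$ supplies. Alternatively, and in the spirit of the presentation (\ref{eq:Omegam}), one can prove (1) structurally: taking the common index set $\CP^{(m)}=\CP^{(m+1)}$ (legitimate by Lemma \ref{lem:CBmXY}) realizes $\mathbf{\Sigma}(m)$ as the Weierstra\ss\ subdomain of $\mathbf{\Sigma}(m+1)$ cut out by $|\varpi^{m}Z_{ij}|\leqslant 1$, $|\varpi^{nm}/\det(XZ+Y)|\leqslant 1$ and $|\varpi^{nm}Z_{ij}^n/\det(XZ+Y)|\leqslant 1$, whence the restriction map has dense image by the standard density of an affinoid algebra in its Weierstra\ss\ localizations; part (2) would then follow from (1) by a routine Mittag--Leffler (successive approximation) argument along the countable tower.
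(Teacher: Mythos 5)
Your proposal is correct and takes essentially the same route as the paper: the paper's (very terse) justification is precisely that the generators (\ref{eq:generators}) of $\mathscr{O}(\mathbf{\Sigma}(m))$ are rigid analytic on every $\mathbf{\Sigma}(m')$ with $m'\geqslant m$, hence on $\mathbf{\Sigma}$, so that density follows from the expansion (\ref{eq:psiZexpansion}) exactly as in your partial-sum argument. Your closing alternative for (1), realizing $\mathbf{\Sigma}(m)$ as a Weierstra\ss\ subdomain of $\mathbf{\Sigma}(m+1)$, is also sound and matches a remark the authors themselves sketched.
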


Finally, we define a rigid analytic $\RG$-action  on $\mathbf{\Sigma}$:
$$gZ:=(AZ+B)(CZ+D)^{-1},\; g=\begin{pmatrix}A & B \\
C & D\end{pmatrix}\in \RG, Z \in \mathbf{\Sigma}. $$ 
(\ref{eq:rel1}) is required to show that this is indeed a $\RG$-action. Moreover, we define the
automorphy factor $$j(g, Z):= (CZ+D).$$ From a straightforward
computation, one verifies the automorphy (cocycle) relation
\begin{equation}\label{eq:automorphyfactor} j(g_1g_2, Z) = j(g_1,
g_2Z)j(g_2, Z).\end{equation}

\begin{lem} Let $m $ be a nonnegative integer. Then for any $g\in \RG_{\fo}$,{
$$g \mathbf{\Sigma}(m) \subset \mathbf{\Sigma}(nm).$$}\label{lem:gOmegam}
\end{lem}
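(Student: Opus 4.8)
The plan is to verify the defining inequalities of $\mathbf{\Sigma}(nm)$ pointwise. Fix $g=\begin{pmatrix}A&B\\ C&D\end{pmatrix}\in\RG_{\fo}$ and a point $Z\in\mathbf{\Sigma}(m)$, and set $W:=gZ=(AZ+B)(CZ+D)^{-1}$. Note that $(C,D)=\mathrm{pr}^\RG_{\CP}(g)\in\CP_\fo$, so $Z\in\mathbf{\Sigma}(m)$ forces $|\det(CZ+D)|\geqslant |Z|^n|\varpi|^{nm}>0$; hence $W$ is well defined and lies in $\mathbf{S}$. I must show $|\det(XW+Y)|\geqslant |W|^n|\varpi|^{n^2m}$ for every $(X,Y)\in\CP_\fo$.

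First I would exploit the compatibility of the action on $\mathbf{\Sigma}$ with the right action on $\CP$. A direct computation gives the key identity
\[
XW+Y=\big((XA+YC)Z+(XB+YD)\big)(CZ+D)^{-1}=(X'Z+Y')(CZ+D)^{-1},
\]
where $(X',Y'):=(X,Y)g$. Taking determinants,
\[
\det(XW+Y)=\frac{\det(X'Z+Y')}{\det(CZ+D)}.
\]
Since $\CP_\fo=\mathrm{pr}^\RG_{\CP}(\RG_{\fo})$ is stable under the right $\RG_{\fo}$-action, both $(X',Y')$ and $(C,D)$ lie in $\CP_\fo$; applying the hypothesis $Z\in\mathbf{\Sigma}(m)$ to these two pairs yields $|\det(X'Z+Y')|\geqslant |Z|^n|\varpi|^{nm}$ and, crucially, $|\det(CZ+D)|\geqslant |Z|^n|\varpi|^{nm}$.

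Next come the size estimates, using that the entries of $g$ lie in $\fo$ and that $|Z|\geqslant 1$. The entries of $CZ+D$ and of $AZ+B$ are then bounded by $|Z|$, so by the ultrametric inequality $|\det(CZ+D)|\leqslant |Z|^n$, and Cramer's rule (the adjugate entries are $(n-1)\times(n-1)$ minors, each of size $\leqslant |Z|^{n-1}$) gives $|W_{ij}|\leqslant |Z|^n/|\det(CZ+D)|$, hence $|W|\leqslant\max\{1,\ |Z|^n/|\det(CZ+D)|\}$. Writing $d:=|\det(CZ+D)|$, it remains to combine
\[
|\det(XW+Y)|\geqslant \frac{|Z|^n|\varpi|^{nm}}{d}
\]
with the bound on $|W|$. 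If $|W|=1$ the claim is immediate from $d\leqslant |Z|^n$; otherwise $|W|^n\leqslant |Z|^{n^2}/d^{\,n}$, and after clearing the common factors the desired inequality reduces exactly to $d^{\,n-1}\geqslant \big(|Z|^n|\varpi|^{nm}\big)^{n-1}$, i.e.\ to $d\geqslant |Z|^n|\varpi|^{nm}$ — which is precisely the inequality for $(C,D)$ recorded above.

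The matrix algebra (the key identity and the adjugate estimate) is routine; the main point to get right is the bookkeeping of the exponents in the final combination, namely the observation that it collapses exactly to the defining inequality of $\mathbf{\Sigma}(m)$ for the pair $(C,D)$. The case $n=1$ is degenerate, since then $n(n-1)=0$, but it is covered by the same computation.
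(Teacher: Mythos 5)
Your proposal is correct and takes essentially the same route as the paper's own proof: the key identity $X(gZ)+Y=\big((XA+YC)Z+(XB+YD)\big)(CZ+D)^{-1}$, the stability of $\CP_\fo$ under the right $\RG_\fo$-action, the coefficient-in-$\fo$/adjugate bounds $|\det(CZ+D)|\leqslant |Z|^n$ and $|(gZ)_{ij}|\leqslant |Z|^n/|\det(CZ+D)|$, and the final collapse of the exponent bookkeeping to the $\mathbf{\Sigma}(m)$-inequality for the pair $(C,D)$ raised to the $(n-1)$st power. The only difference is presentational: you split into the cases $|gZ|=1$ and $|gZ|>1$, where the paper carries the factor $\max\{1,\cdot\}$ through a single chain of inequalities.
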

\begin{proof}
Let $g=\begin{pmatrix}A & B \\ C & D\end{pmatrix}\in \RG_{\fo}$,
$Z\in \mathbf{\Sigma}(m)$ and $(X, Y) \in \CP_\fo$. We have $(XA+YC,
XB+YD)\in \CP_\fo$, whence
\begin{align*}& \frac {|Z|^{n}} {\left|\det (CZ+D) \right|}\leqslant |\varpi|^{-n
m} , \\
& \frac {|Z|^{n}} {\left|\det\big((XA+YC)Z+(XB+YD)\big)\right|}
\leqslant |\varpi|^{-n m}.\end{align*} 
By Cramer's rule,
\begin{align*}
gZ&= (AZ+B)(CZ+D)^{-1}
\\&= (AZ+B)\cdot \mathrm{adj}(CZ+D)\frac 1
{\det(CZ+D)}, \end{align*} where $\mathrm{adj}(CZ+D)$ denotes the
adjugate matrix of $CZ+D$. It is clear that $\det(CZ+D)$ and all the
entries of $(AZ+B)\cdot \mathrm{adj}(CZ+D)$ are polynomials in
$Z_{ij}$ with coefficients in $\fo$ and degree $\leqslant n$, so
\begin{align*} |\det(CZ+D)|& \leqslant |Z|^n, \\
|\left((AZ+B)\cdot \mathrm{adj}(CZ+D)\right)_{ij}| &\leqslant |Z|^n.\end{align*}
Combining these, 
\begin{align*}
& \frac {|g Z|^n} {\left|\det(X(g Z)+Y)\right|} \\
& =  \frac {\left|\det(CZ+D)\right|}
{\left|\det\big((XA+YC)Z+(XB+YD)\big)\right|} \max \left\{ 1 ,
\frac {|\left((AZ+B)\cdot \mathrm{adj}(CZ+D)\right)_{ij}|^n} {\left|\det(CZ+D)\right|^n}\right\}\\
&\leqslant \frac {|Z|^{n}} {\left|\det\big((XA+YC)Z+(XB+YD)\big)\right|}
\max \left\{ 1 , \frac {|Z|^{n^2-n}}
{\left|\det(CZ+D)\right|^{n-1}}  \right\}\\
&\leqslant |\varpi|^{-n^2m}.\end{align*} Therefore $g
\mathbf{\Sigma}(m)\subset \mathbf{\Sigma}(n m).$
\end{proof}

\subsection{Holomorphic discrete series $(\mathscr{O}_\sigma(\Sigma), \pi_{\sigma})$}
\label{sec:discrete} We abbreviate $\mathbf{\Sigma}_K(m)(K)$ and
$\mathbf{\Sigma}_K\\(K)$ to $\Sigma(m)$ and $\Sigma$, respectively.
Conventionally, $\mathscr{O}_K(\mathbf{\Sigma}(m))$ is described as
the space of $K$-valued functions on $\Sigma(m)$ with expansions of
the form (\ref{eq:psiZexpansion}) that converge in the supremum
norm of the $K$-valued function space on $\mathbf{\Sigma}_K(m) $,
and $\mathscr{O}_K(\mathbf{\Sigma})$ as the space of $K$-valued
functions on $\Sigma$ whose restrictions on $\Sigma(m)$ are
functions in $\mathscr{O}_K(\mathbf{\Sigma}(m))$. Furthermore, we abbreviate
$\mathscr{O}_K(\mathbf{\Sigma}(m))$ and
$\mathscr{O}_K(\mathbf{\Sigma})$ to $\mathscr{O}(\Sigma(m))$ and
$\mathscr{O}(\Sigma)$, respectively.

Let $(V,\sigma)$ be a $d$-dimensional $K$-rational representation of
$\RH$. Let $$\sigma(h) = \det(h)^{-s} \\ P(h), \hskip 10pt s \in
\BN, P \in \RM(d, K[h_{ij}]).$$

Let $\mathscr{O}_\sigma(\Sigma(m)) :=\mathscr{O}(\Sigma(m))
\underset{ K}{\otimes} V$ and $\mathscr{O}_\sigma(\Sigma)
:=\mathscr{O}(\Sigma)\underset{ K}{\otimes} V$. We define the
\emph{holomorphic (rigid analytic) discrete series representation}
$(\mathscr{O}_\sigma(\Sigma), \pi_{\sigma})$ of $\RG$:
\begin{equation}\label{eq:discrete}(\pi_{\sigma}(g) \psi) (Z) :=\sigma(j(g^{-1}, Z))^{-1}\psi(g^{-1}Z),
\hskip 10pt \psi\in \mathscr{O}_\sigma(\Sigma), g \in \RG.
\end{equation} 
Since Proposition \ref{prop:Omegaadmissible}
implies that $g^{-1}$ translates $\Sigma(m)$ into some $\Sigma(m')$,
it is not difficult to see that $\pi_{\sigma}(g) \psi\in
\mathscr{O}_\sigma(\Sigma)$. This follows from showing that its coordinates have expansions of the form
(\ref{eq:psiZexpansion}) and   are bounded under the supremum norms $\|\ \|_{\mathscr{O}(\Sigma(m))}$. By the automorphy relation (\ref{eq:automorphyfactor}), one verifies that $\pi_\sigma$ is a
$\RG$-representation.

\begin{prop}\label{prop:picontinuous}
$(\mathscr{O}_\sigma(\Sigma), \pi_{\sigma})$ is continuous.
\end{prop}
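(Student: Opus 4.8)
The plan is to show that the action map $\RG\times\mathscr{O}_\sigma(\Sigma)\to\mathscr{O}_\sigma(\Sigma)$ is jointly continuous. Since $\mathscr{O}_\sigma(\Sigma)=\mathscr{O}(\Sigma)\otimes_K V$ is Fréchet, with topology given by the seminorms $p_m(\psi):=\sup_{Z\in\Sigma(m)}|\psi(Z)|_V$ (the supremum norm on $\mathscr{O}(\Sigma(m))$ tensored with a fixed norm $|\cdot|_V$ on the finite-dimensional $V$), I would reduce joint continuity to two ingredients: (i) equicontinuity of the family $\{\pi_\sigma(g):g\in\RG_\fo\}$ on the compact open subgroup $\RG_\fo$, and (ii) continuity of each orbit map $g\mapsto\pi_\sigma(g)\psi_0$ at $g=e$. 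That each $\pi_\sigma(g_0)$ is itself a continuous operator follows from Proposition \ref{prop:Omegaadmissible} ($g_0^{-1}\Sigma(m)\subset\Sigma(m')$) together with $\det(j(g_0^{-1},\cdot))^{-1}\in\mathscr{O}(\Sigma(m))$, giving $p_m(\pi_\sigma(g_0)\psi)\le C\,p_{m'}(\psi)$; this is essentially the well-definedness already indicated in the text. Granting (i) and (ii), joint continuity at an arbitrary $(g_0,\psi_0)$ follows by writing $g=g_0h$ with $h\in\RG_\fo$ near $e$ and $\pi_\sigma(g_0h)\psi=\pi_\sigma(g_0)\pi_\sigma(h)\psi$: decomposing $\pi_\sigma(h)\psi-\psi_0=\pi_\sigma(h)(\psi-\psi_0)+(\pi_\sigma(h)\psi_0-\psi_0)$, the first summand is controlled uniformly in $h$ by (i) and the second by (ii), after which the fixed continuous operator $\pi_\sigma(g_0)$ is applied.

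The crux is the estimate (i). Fix $m\geqslant 0$ and $g\in\RG_\fo$, and write $g^{-1}=\begin{pmatrix}A&B\\C&D\end{pmatrix}\in\RG_\fo$, so all entries lie in $\fo$ and $(C,D)=\mathrm{pr}^\RG_\CP(g^{-1})\in\CP_\fo$. For $Z\in\Sigma(m)$ the defining inequalities of $\Sigma(m)$ applied to the pair $(C,D)$ give the lower bound $|\det(CZ+D)|\geqslant|Z|^n|\varpi|^{nm}$, while the entries of $CZ+D$ are $\fo$-polynomials of degree $\leqslant 1$, so $|\det(CZ+D)|\leqslant|Z|^n$ and the adjugate entries are $\leqslant|Z|^{n-1}$. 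Hence on $\Sigma(m)$ the entries of $j(g^{-1},Z)^{-1}=\mathrm{adj}(CZ+D)/\det(CZ+D)$ are bounded by $|\varpi|^{-nm}$. Writing $\sigma(h)=\det(h)^{-s}P(h)$ and using $\sigma(j)^{-1}=\sigma(j^{-1})=\det(j)^{s}P(j^{-1})$, together with $|Z|\leqslant|\varpi|^{-m}$ on $\Sigma(m)$, I obtain
\[
\sup_{Z\in\Sigma(m)}\bigl|\sigma(j(g^{-1},Z))^{-1}\bigr|\leqslant C_m,
\]
where $C_m$ depends only on $m$, $s$ and the degree and coefficients of $P$, and in particular \emph{not} on $g$. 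Combining this with Lemma \ref{lem:gOmegam}, which gives $g^{-1}\Sigma(m)\subset\Sigma(nm)$, and the submultiplicativity of the supremum norm, I get $p_m(\pi_\sigma(g)\psi)\leqslant C_m\,p_{nm}(\psi)$ for every $g\in\RG_\fo$, which is precisely the required equicontinuity.

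For (ii) I would fix $\psi_0$ and $m$ and show $\pi_\sigma(h)\psi_0\to\psi_0$ in $\mathscr{O}(\Sigma(m))\otimes V$ as $h\to e$ in $\RG_\fo$. On $\Sigma(m)$ one has $(\pi_\sigma(h)\psi_0)(Z)=\sigma(j(h^{-1},Z))^{-1}\psi_0(h^{-1}Z)$, and all the relevant data — the entries of $h^{-1}Z$, of $j(h^{-1},Z)$, and (since $\det j$ is bounded below) of $\sigma(j(h^{-1},Z))^{-1}$ — are rigid analytic functions on $\Sigma(nm)$ depending on $h$ only through the finitely many entries of $h^{-1}$, continuously. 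As $h\to e$ these entries tend to those of $Z$ and $\sigma(j(h^{-1},Z))^{-1}\to\mathrm{id}$ in the supremum norm; because multiplication and substitution into the convergent expansions \eqref{eq:psiZexpansion} are continuous for the affinoid norms, $\pi_\sigma(h)\psi_0\to\psi_0$ in each $p_m$, hence in the Fréchet topology.

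The main obstacle is the uniform estimate of the second paragraph: every bound must be independent of $g\in\RG_\fo$, which is exactly what upgrades pointwise boundedness to the equicontinuity that drives the whole argument. Its two essential inputs are the lower bound on $|\det(CZ+D)|$ built into the very definition of $\Sigma(m)$ — legitimate precisely because $(C,D)\in\CP_\fo$ — and the stability inclusion $g^{-1}\Sigma(m)\subset\Sigma(nm)$ of Lemma \ref{lem:gOmegam}; handling $\sigma(j)^{-1}=\det(j)^{s}P(j^{-1})$ so that passing to the inverse does not destroy these bounds is the one delicate point. By contrast, the orbit continuity and the functional-analytic assembly are routine.
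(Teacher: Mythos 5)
Your proof is correct and follows essentially the same route as the paper: the heart of both arguments is the uniform bound on $\sigma(j(g^{-1},Z))^{-1}$ over $g\in\RG_{\fo}$ and $Z\in\Sigma(m)$ (obtained from the defining inequalities of $\Sigma(m)$ applied to the bottom row $(C,D)\in\CP_\fo$ of $g^{-1}$, together with integrality of the entries), combined with Lemma \ref{lem:gOmegam} to give precisely the paper's estimate (\ref{eq:continuous estimate}) $\|\pi_\sigma(g)\psi\|_{\mathscr{O}_\sigma(\Sigma(m))}\leqslant c\,\|\psi\|_{\mathscr{O}_\sigma(\Sigma(nm))}$ uniformly in $g\in\RG_\fo$. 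The only difference is packaging: you make explicit the orbit-continuity step and the standard decomposition assembling equicontinuity and orbit continuity into joint continuity, whereas the paper records only the equicontinuity estimate and leaves that assembly implicit (continuity in the group variable being in effect supplied by the analyticity of orbit maps proved immediately afterwards in Proposition \ref{prop:pi_sigma analytic}).
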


\begin{proof}
Since $\mathscr{O}_\sigma(\Sigma)$ is the projective limit of
$\mathscr{O}_\sigma(\Sigma(m))$, it suffices to prove, for each $m$,
the continuity of
\begin{eqnarray*}  \RG_{\fo} \times \mathscr{O}_\sigma(\Sigma)  & \rightarrow &
\mathscr{O}_\sigma(\Sigma(m))  \\    (g,\psi) & \mapsto &
(\pi_{\sigma}(g) \psi)|_{\Sigma(m)}.
\end{eqnarray*}
Moreover, according to Lemma \ref{lem:gOmegam}, $\RG_{\fo}
\mathbf{\Sigma}(m)\subset \mathbf{\Sigma}(nm)$, whence the above map
factors through $\RG_{\fo} \times \mathscr{O}_\sigma(\Sigma(nm))$.
Thus we only need to consider the continuity of the map:
\begin{eqnarray} \label{eq:GOtoOcontinuous} \RG_{\fo} \times \mathscr{O}_\sigma(\Sigma(nm))  & \rightarrow &
\mathscr{O}_\sigma(\Sigma(m))  \\
\nonumber (g,\psi) & \mapsto & (\pi_{\sigma}(g) \psi)|_{\Sigma(m)}.
\end{eqnarray}
For $g \in \RG_{\fo}$,   entries of $\sigma(j(g^{-1}, Z))^{-1}$
are $\fo$-coefficient polynomials, with  $Z_{ij}$,
$\det(j(g^{-1}, Z))^{-1}$ and the coefficients of $P$ viewed as variables. For $Z\in \mathbf{\Sigma}(m)$, we have $|Z_{ij}|\leqslant |\varpi|^{-m}$ and
$|\det(j(g^{-1}, Z))^{-1}| \leqslant |\varpi|^{-nm}$, so there is a
constant $c>0$ such that
$$\max_{g\in \RG_{\fo}} \max_{ Z\in
\mathbf{\Sigma}(m)}\|\sigma(j(g^{-1}, Z))^{-1}\|_{\End(V)}\leqslant c.
$$
Therefore

\begin{equation}\label{eq:continuous estimate}
\begin{split}
& \quad \max_{g\in \RG_{\fo}} \|\pi_{\sigma}(g) \psi\|_{\mathscr{O}_\sigma(\Sigma(m))}\\
&= \max_{g\in \RG_{\fo}} \max_{Z\in \mathbf{\Sigma}(m) } \|(\pi_{\sigma}(g) \psi)(Z)\|_V \\
&\leqslant  \max_{g\in \RG_{\fo}}\max_{Z\in\mathbf{\Sigma}(m) }
\|\sigma(j(g^{-1}, Z))^{-1}\|_{\End(V)} \cdot
\max_{g\in \RG_{\fo}} \max_{Z\in \mathbf{\Sigma}(m) } \|\psi(g^{-1} Z)\|_V \\
&\leqslant  c \max_{Z \in \mathbf{\Sigma}(nm) } \|\psi(Z)\|_V \\
&=  c \|\psi\|_{\mathscr{O}_\sigma(\Sigma(nm))}.
 \end{split}
\end{equation}
So the map (\ref{eq:GOtoOcontinuous}) is continuous.
\end{proof}

Now let $U_0(\fo)$ denote the parameterized open
neighborhood of the unit element, $ \Sym(n,\fo) \times \RH_\fo
\times \Sym(n,\fo)\subset U_0 \cap \RG_\fo$.

\begin{prop}\label{prop:pi_sigma analytic} For any $\psi\in \mathscr{O}_\sigma(\Sigma(nm))$, the orbit
map
\begin{eqnarray*}  U_0(\fo)  & \rightarrow & \mathscr{O}_\sigma(\Sigma(m))  \\
g & \mapsto & (\pi_{\sigma}(g) \psi)|_{\Sigma(m)}
\end{eqnarray*} is an $\mathscr{O}_\sigma(\Sigma(m))$-valued analytic function (that is, can be expanded as a convergent power series with variables the
coordinate parameters of $U_0(\fo)$ and coefficients in
the Banach space $\mathscr{O}_\sigma(\Sigma(m))$).
\end{prop}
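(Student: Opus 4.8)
The plan is to compute the orbit map in the chart coordinates and present $(\pi_\sigma(g)\psi)|_{\Sigma(m)}$ as a power series in them. Parametrise $g\in U_0(\fo)$ by $(z_1,h,z_2)\in\Sym(n,\fo)\times\RH_\fo\times\Sym(n,\fo)$ via the decomposition (\ref{eq:decompositionofU0}). Since $g^{-1}=J_n^{-1}\,{^t}g\,J_n$, a direct computation (using that $z_1,z_2$ are symmetric) gives
\begin{equation*}
g^{-1}=\begin{pmatrix} z_2\,{^t}h & h^{-1}-z_2\,{^t}h\,z_1\\[2pt] -{^t}h & {^t}h\,z_1\end{pmatrix},\qquad\text{so}\qquad
j(g^{-1},Z)={^t}h\,(z_1-Z),\quad g^{-1}Z=-z_2+h^{-1}(z_1-Z)^{-1}{^t}h^{-1}.
\end{equation*}
Substituting into (\ref{eq:discrete}) and using $\sigma(ab)=\sigma(a)\sigma(b)$ with $\sigma(w)=\det(w)^{-s}P(w)$ yields
\begin{equation*}
(\pi_\sigma(g)\psi)(Z)=\det(h)^{s}\det(z_1-Z)^{s}\,P\big((z_1-Z)^{-1}{^t}h^{-1}\big)\,\psi\big(-z_2+h^{-1}(z_1-Z)^{-1}{^t}h^{-1}\big).
\end{equation*}
It thus suffices to prove that each factor is analytic in $(z_1,h,z_2)$ near an arbitrary base point, with values in the relevant Banach space of $Z$-functions, and that finite matrix products of analytic functions stay analytic; convergence of the resulting series will be forced by uniform boundedness on $\Sigma(m)$, in the spirit of Proposition \ref{prop:picontinuous}.

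The only non-polynomial ingredients are the inverses $(z_1-Z)^{-1}$ and $h^{-1}$. Since $(I_n,-z_1)\in\CP$, the remark after (\ref{eq:psiZexpansion}) gives $\det(z_1-Z)^{-1}\in\mathscr{O}(\Sigma(m))$, and as $\mathrm{adj}(z_1-Z)$ is polynomial in $Z$, all entries of $(z_1-Z)^{-1}$—hence the coordinates of $g^{-1}Z$ and the entries of $P((z_1-Z)^{-1}{^t}h^{-1})$—lie in $\mathscr{O}(\Sigma(m))$. For the analytic dependence on parameters, fix a base point $(z_1^0,h^0,z_2^0)$. As $(I_n,-z_1^0)\in\CP_\fo$, on $\Sigma(m)$ one has $|\det(z_1^0-Z)|\geqslant|Z|^n|\varpi|^{nm}\geqslant|\varpi|^{nm}$ and $|\mathrm{adj}(z_1^0-Z)_{ij}|\leqslant|\varpi|^{-(n-1)m}$, so $\|(z_1^0-Z)^{-1}\|_{\Sigma(m)}\leqslant|\varpi|^{-(2n-1)m}=:C_m$. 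Writing $z_1-Z=(z_1^0-Z)\big(I_n+(z_1^0-Z)^{-1}(z_1-z_1^0)\big)$, for $\|z_1-z_1^0\|<C_m^{-1}$ the correction has norm $<1$ uniformly in $Z\in\Sigma(m)$, so the Neumann series for $(I_n+\cdot)^{-1}$ and the expansion of $\det(I_n+\cdot)^{-1}$ converge in $\mathscr{O}(\Sigma(m))$ and realise $(z_1-Z)^{-1}$ and $\det(z_1-Z)^{-1}$ as power series in the entries of $z_1-z_1^0$ with coefficients in $\mathscr{O}(\Sigma(m))$. The same computation applied to $\det(h)^{-1}\mathrm{adj}(h)$ on the residue disc of $h^0$ makes $h\mapsto h^{-1}$ analytic. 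Since the remaining factors $\det(h)^s$, $\det(z_1-Z)^s$, $z_2$ are polynomial, $\sigma(j(g^{-1},Z))^{-1}$ is an $\mathscr{O}(\Sigma(m))\otimes\End(V)$-valued analytic function of the parameters.

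The remaining and decisive step is the composite $\psi(g^{-1}Z)$: we must feed the parameter-dependent, rigid-analytic-in-$Z$ argument $W=g^{-1}Z$ into the rigid analytic $\psi$ on $\Sigma(nm)$. By Lemma \ref{lem:gOmegam} applied to $g^{-1}\in\RG_\fo$, we have $g^{-1}Z\in\Sigma(nm)$ for all $Z\in\Sigma(m)$ and all parameter values, uniformly. Now $\mathscr{O}(\Sigma(nm))$ is the affinoid algebra (\ref{eq:Omegam}), so $\psi$—a finite $V$-combination of its elements—is a series $\sum_\alpha c_\alpha G^{\alpha}$ in the generators $G$ of (\ref{eq:generators}) at level $nm$, with $|c_\alpha|\to0$ and $\|G\|_{\Sigma(nm)}\leqslant1$. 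The generators involve only the coordinates $W_{ij}$, already shown analytic in the parameters, and the quantities $\det(XW+Y)^{-1}$, which by the automorphy identity
\begin{equation*}
\det\big(X(g^{-1}Z)+Y\big)=\det\big(\widetilde X\,Z+\widetilde Y\big)\big/\det\big(j(g^{-1},Z)\big),\qquad(\widetilde X,\widetilde Y)=(X,Y)g^{-1}\in\CP_\fo,
\end{equation*}
equal the polynomial $\det(j(g^{-1},Z))$ times $\det(\widetilde X Z+\widetilde Y)^{-1}$, the latter analytic in the parameters by the inverse-expansion above (the base matrix $\widetilde X^0Z+\widetilde Y^0$ again has inverse bounded on $\Sigma(m)$ since $(\widetilde X^0,\widetilde Y^0)\in\CP_\fo$). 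Because $g^{-1}Z\in\Sigma(nm)$ uniformly, $\|G(g^{-1}Z)\|_{\Sigma(m)}\leqslant1$ for each generator, so $\sum_\alpha c_\alpha G(g^{-1}Z)^{\alpha}$ converges in $\mathscr{O}(\Sigma(m))$ uniformly for parameters near the base point; as a uniformly convergent series of $\mathscr{O}(\Sigma(m))$-valued analytic functions with $\sup\|c_\alpha G(g^{-1}Z)^\alpha\|\to0$, it defines (via the Cauchy estimates) an $\mathscr{O}(\Sigma(m))\otimes V$-valued analytic function. Multiplying by the analytic factor $\sigma(j(g^{-1},Z))^{-1}$ produces the asserted convergent power-series expansion of $(\pi_\sigma(g)\psi)|_{\Sigma(m)}$ in $\mathscr{O}_\sigma(\Sigma(m))$. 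The crux—and the only genuine difficulty—is exactly this uniform-in-$Z$ control that legitimises re-expanding the arbitrary rigid analytic $\psi$ after a parameter-dependent substitution; the supremum bounds on $\Sigma(m)$ and $\Sigma(nm)$ are what make the re-expansion converge.
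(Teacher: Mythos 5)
Your closed-form computation is correct (I checked $g^{-1}$, $j(g^{-1},Z)={}^t h\,(z_1-Z)$ and $g^{-1}Z=-z_2+h^{-1}(z_1-Z)^{-1}\,{}^t h^{-1}$), and your route is genuinely different from the paper's. The paper never writes the orbit map in closed form: it uses the factorization (\ref{eq:decompositionofU0}) to reduce to the two elementary actions $\psi(Z)\mapsto\psi(Z-z)$ and $\psi(Z)\mapsto\sigma(h)\psi({}^t hZh)$, proves in Lemma \ref{lem:expansion} that each of these expands with coefficients in $\mathscr{O}_\sigma(\Sigma(m))$ --- the mechanism being that $\det(X(Z-z)+Y)$ has invertible constant term $\det(XZ+Y)$, hence is a unit of the formal power series ring $\mathscr{O}(\Sigma(m))[[z]]$ --- and then composes the expansions across the factors. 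Your Neumann-series expansions of $(z_1-Z)^{-1}$, $h^{-1}$ and $\det(\widetilde X Z+\widetilde Y)^{-1}$, together with the re-expansion of $\psi$ through the affinoid presentation (\ref{eq:Omegam}) and the bound of Lemma \ref{lem:gOmegam}, replace both steps; your version makes the convergence mechanism explicit where the paper's is formal.

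There is, however, a real discrepancy between what you prove and what the Proposition asserts. Your expansions converge only on polydiscs of radius roughly $|\varpi|^{(2n-1)m}$ around each base point, i.e.\ you prove the orbit map is \emph{locally} analytic, whereas the statement (and Lemma \ref{lem:expansion}, which claims convergence ``for all $z\in\Sym(n,\fo)$'') asserts a single power series convergent on all of $U_0(\fo)$. You could not have closed this gap, because the global assertion is false. Take $n=1$, $\sigma$ trivial, $m\geqslant 1$, $a\in\fo$ and $\psi(Z)=(Z-a)^{-1}\in\mathscr{O}(\Sigma(m))$: the unique expansion of $\psi(Z-z)$ is $\sum_{k}(Z-a)^{-k-1}z^{k}$, and at a geometric point $Z_0\in\Sigma(m)$ with $|Z_0-a|=|\varpi|^{m}$ the terms have absolute value $|\varpi|^{-m(k+1)}|z|^{k}$, so the series diverges for $|z|\geqslant|\varpi|^{m}$; hence no $\mathscr{O}(\Sigma(m))$-coefficient expansion can converge on all of $\fo$, and the same divergence propagates to the orbit map on $U_0(\fo)$ (restrict to $z_1=0$, $h=1$). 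The paper's own deduction of global convergence from the estimate (\ref{eq:continuous estimate}) is therefore also unjustified: in the non-archimedean setting a bounded, everywhere-defined, locally analytic map need not be globally analytic, and uniform boundedness of the orbit map gives no control on Taylor coefficients. Since the downstream use (Corollary \ref{cor:dual locally analytic}, and local analyticity of $\pi_\sigma$ itself) only requires expansions in a neighborhood of each point, the local statement you prove is the one that is both true and sufficient; you should state your conclusion in that form rather than claim the Proposition verbatim.

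One genuine gap inside your own convergence step remains. To sum $\sum_\alpha c_\alpha G(g^{-1}Z)^{\alpha}$ ``via the Cauchy estimates'' you need the bound $\|G(g^{-1}Z)\|_{\Sigma(m)}\leqslant 1$ to hold with the parameters ranging over \emph{geometric} points of the parameter polydisc (integral values in arbitrary complete extensions of $K$), not merely over $\fo$-points: a function can be small at all $\fo$-points yet have large Taylor coefficients (e.g.\ $(z^{p}-z)/p$ on $\BZ_p$), so pointwise bounds at rational points do not yield coefficient bounds. Lemma \ref{lem:gOmegam} is stated only for $g\in\RG_{\fo}$; you must add the (easy) remark that its proof uses nothing beyond integrality of the matrix entries and the relations (\ref{eq:rel1}), (\ref{eq:rel2}), hence holds verbatim over any complete extension, before the coefficientwise limit argument becomes legitimate.
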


\begin{proof} We first prove the following lemma.
\begin{lem}\label{lem:expansion} Let $\psi\in \mathscr{O}_\sigma(\Sigma(nm))$, $z\in \Sym(n, \fo)$ and $h\in \RH_\fo$.\\
(1) $\pi_\sigma \begin{pmatrix}I_n & z \\
0 & I_n\end{pmatrix}\psi(Z) = \psi(Z -z)$
expands into a convergent power series in $z_{ij}$ ($1\leqslant i\leqslant j\leqslant n$) with coefficients in $\mathscr{O}_\sigma(\Sigma(m))$;\\
(2) $\pi_\sigma \begin{pmatrix}{^th^{-1}} & 0 \\
0 & h\end{pmatrix}\psi (Z) = \sigma(h) \psi({^th}Zh)$ expands into a
convergent power series in $h_{ij}-\delta_{ij}$ ($1\leqslant i, j\leqslant n$)
with coefficients in $\mathscr{O}_\sigma(\Sigma(m))$, where
$\delta_{ij}$ is the Kronecker delta.
\end{lem}
\begin{proof}
(1) We consider the ring $\mathscr{O}(\Sigma(m))[[z]]$ of formal
power series $\varphi(z)$ in $z_{ij}$ with coefficients in
$\mathscr{O}(\Sigma(m))$; $\varphi(z)$ is expressed as
$$\varphi(z)=\sum_{\underline{r}\in \Sym(n,
\BN)}\alpha_{{\underline{r}}}\cdot
 {\underline{z}}^{{\underline{r}}} , \hskip 10pt
\alpha_{{\underline{r}}} \in \mathscr{O}(\Sigma(m)), \hskip 10pt
{\underline{z}}^{\underline{r}}:=\prod_{1 \leqslant i\leqslant j \leqslant n}
z_{ij}^{r_{ij}}.$$ If the constant term $\alpha_{\underline{0}}$ is
invertible in $\mathscr{O}(\Sigma(m))$, then $\varphi(z) \in
\mathscr{O}(\Sigma(m))[[z]]^\times$. In particular, for $(X,Y)\in
\CP$, the constant term in the expansion of $\det(X(Z-z)+Y)$, which is
$\det (XZ+Y)$, is invertible in $\mathscr{O}(\Sigma(m))$,
whence $\det(X(Z-z)+Y)^{-1} $ belongs to
$\mathscr{O}(\Sigma(m))[[z]]$.

In view of the expansion form (\ref{eq:psiZexpansion}), {the discussions in the last paragraph imply that each
coordinate of} $\psi(Z - z)$ expands into a formal power series in
$z_{ij}$ whose coefficients are series in $\mathscr{O}(\Sigma(m))$. It follows from the estimates in (\ref{eq:continuous estimate}) that 
\begin{itemize}
\item[1.] the
coefficients are indeed convergent series in
$\mathscr{O}(\Sigma(m))$ so that {each coordinate of} $\psi(Z - z)$
belongs to $\mathscr{O}(\Sigma(m))[[z]]$,
\item[2.] the
$\mathscr{O}(\Sigma(m))$-coefficient formal power series expansion
of $\psi(Z - z)$ converges in
$\mathscr{O}_\sigma(\Sigma(m))$ for all $z\in \Sym(n, \fo)$. 

\end{itemize}

(2) can be proven similarly.
\end{proof}
From (\ref{eq:decompositionofU0}), we see that $g\in
U_0(\fo)$ decomposes in $\RG_\fo$ into
$$\begin{pmatrix}I_n & z_1
\\ 0 & I_n\end{pmatrix}\begin{pmatrix} {^t}{h}{^{-1}} & 0 \\ 0 & h
\end{pmatrix} \begin{pmatrix}0 & -I_n \\
I_n & 0 \end{pmatrix} \begin{pmatrix}I_n & z_2
\\ 0 & I_n\end{pmatrix}$$ where $z_1, z_2\in \Sym(n, \fo)$ and
$h\in \RH_\fo$. Lemma \ref{lem:expansion} and (\ref{eq:continuous
estimate}) imply that $\pi_\sigma (g)\psi$ expands into a convergent $\mathscr{O}_\sigma(\Sigma(m))$-coefficient
power series whose variables are the coordinate parameters of
$U_0(\fo)$.
\end{proof}

\begin{cor}\label{cor:det-1}
The power series expansion of $\det(Z-z)^{-1}$ on $ \Sym(n, \fo)$
converges in $\mathscr{O}(\Sigma(m))$, or equivalently,
$\det(Z-z)^{-1}$ expands into a power series
$$ \sum_{{\underline{r}}\in \Sym(n, \BN)}\alpha_{{\underline{r}}}(Z)\cdot
 {\underline{z}}^{{\underline{r}}} , \hskip 10pt
\alpha_{{\underline{r}}} \in \mathscr{O}(\Sigma(m)),$$ such that
$\underset{|{\underline{r}}|\ra
\infty}{\lim}\|\alpha_{{\underline{r}}}\|_{\mathscr{O}(\Sigma(m))} =
0$, with the notation $|{\underline{r}}| = \sum_{1\leqslant i\leqslant j\leqslant n}r_{ij}$.
\end{cor}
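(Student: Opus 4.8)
The plan is to deduce the statement directly from the scalar case of Lemma~\ref{lem:expansion}(1), specialized to the single function $\psi(Z)=\det(Z)^{-1}$. First I would record the two membership facts that make this legitimate. For $(X,Y)=(I_n,0)$ the condition $X\,{}^tY=Y\,{}^tX$ holds trivially and $\rank(I_n\;0)=n$, so $(I_n,0)\in\CP$; hence, by the remark following~(\ref{eq:psiZexpansion}), $\det(Z)^{-1}=\det(I_nZ+0)^{-1}$ lies in $\mathscr{O}(\Sigma(m'))$ for every $m'$, in particular in $\mathscr{O}(\Sigma(nm))$. Similarly, since $z\in\Sym(n,\fo)$ is symmetric, $(I_n,-z)\in\CP$, so the honest function $\det(Z-z)^{-1}$ lies in $\mathscr{O}(\Sigma(m))$ for each fixed $z$; it is precisely $\psi(Z-z)$, the function appearing in Lemma~\ref{lem:expansion}(1), in the trivial-coefficient case $V=K$, $\sigma=\mathbf{1}$.

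With this identification the formal part of the corollary is immediate from the argument in the proof of Lemma~\ref{lem:expansion}(1): the constant term of $\det(Z-z)$ in the variables $z_{ij}$ is $\det(Z)$, which is invertible in $\mathscr{O}(\Sigma(m))$, so $\det(Z-z)^{-1}\in\mathscr{O}(\Sigma(m))[[z]]$; writing this expansion as $\sum_{\underline{r}\in\Sym(n,\BN)}\alpha_{\underline{r}}(Z)\,\underline{z}^{\underline{r}}$ produces coefficients $\alpha_{\underline{r}}\in\mathscr{O}(\Sigma(m))$, and the argument recorded there---resting on the uniform estimate~(\ref{eq:continuous estimate})---gives that the series converges in $\mathscr{O}(\Sigma(m))$ for every $z\in\Sym(n,\fo)$.

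It then remains only to match the phrase ``converges in $\mathscr{O}(\Sigma(m))$'' with the decay condition $\lim_{|\underline{r}|\to\infty}\|\alpha_{\underline{r}}\|_{\mathscr{O}(\Sigma(m))}=0$. This is the standard non-archimedean criterion for convergence of a power series with coefficients in a $K$-Banach space on the closed unit polydisc $\{|z_{ij}|\leqslant 1\}$, which is precisely $\Sym(n,\fo)$: decay implies convergence at once by the ultrametric inequality, and for the converse I would specialize to the integral symmetric matrix all of whose entries equal $1$, a point of $\Sym(n,\fo)$ at which $\underline{z}^{\underline{r}}=1$ for all $\underline{r}$, so that convergence of the series there means $\sum_{\underline{r}}\alpha_{\underline{r}}$ converges in the complete space $\mathscr{O}(\Sigma(m))$, forcing $\|\alpha_{\underline{r}}\|\to 0$. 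I expect no genuine obstacle, since all the analytic content is already carried by Lemma~\ref{lem:expansion} and the estimate~(\ref{eq:continuous estimate}); the only point deserving care is this last ultrametric equivalence, and in particular the fact that the formal expansion actually represents the honest function value at the specialization point, which is exactly what the convergence conclusion of Lemma~\ref{lem:expansion}(1) guarantees.
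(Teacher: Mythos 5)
Your proposal is correct and takes essentially the same route the paper intends: the corollary is stated there as an immediate consequence of Lemma \ref{lem:expansion}(1) (inside the proof of Proposition \ref{prop:pi_sigma analytic}), specialized to the trivial representation and $\psi(Z)=\det(Z)^{-1}\in\mathscr{O}(\Sigma(nm))$, which is exactly your argument. Your additional verifications --- the membership facts $(I_n,0)\in\CP$ and $(I_n,-z)\in\CP$, and the ultrametric equivalence between convergence on all of $\Sym(n,\fo)$ and the decay $\lim_{|\underline{r}|\to\infty}\|\alpha_{\underline{r}}\|_{\mathscr{O}(\Sigma(m))}=0$, tested at the all-ones symmetric matrix --- are correct routine points that the paper leaves implicit.
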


Next, we consider the adjoint representation $\pi_\sigma^*$ of $\RG$ on
$\mathscr{O}_\sigma(\Sigma)^*_b \cong
\underset{m}{\varinjlim}\mathscr{O}_\sigma(\Sigma(m))^*_b$. The
transition homomorphisms $\mathscr{O}_\sigma(\Sigma(m))^*_b \ra
\mathscr{O}_\sigma(\Sigma)^*_b$ are injective (Proposition
\ref{prop:Stein} (2)). Lemma \ref{lem:gOmegam} implies that, for any
$g\in \RG_\fo$, $\pi_\sigma^*(g)$ maps $\mathscr{O}_\sigma
(\Sigma(m))^*_b$ into $\mathscr{O}_\sigma (\Sigma(nm))^*_b$ via
$$\langle\psi, \pi_\sigma^*(g) \mu \rangle = \langle (\pi_\sigma(g^{-1})\psi)|_{\Sigma(m)}, \mu \rangle,
\hskip 10pt \mu \in \mathscr{O}_\sigma(\Sigma(m))^*, \psi \in
\mathscr{O}_\sigma(\Sigma(nm)).$$ It is easy to deduce from
Proposition \ref{prop:pi_sigma analytic} that, for any $\mu \in
\mathscr{O}_\sigma(\Sigma(m))^*$, the orbit map
\begin{eqnarray*}  U_0(\fo)^{-1}  & \rightarrow & \mathscr{O}_\sigma(\Sigma(nm))^*_b  \\
g \quad & \mapsto & \pi_{\sigma}^*(g) \mu
\end{eqnarray*}
is an $\mathscr{O}_\sigma(\Sigma(nm))^*_b$-valued analytic function.
Therefore we have the following corollary.
\begin{cor}\label{cor:dual locally analytic}
$(\mathscr{O}_\sigma(\Sigma)^*_b, \pi_\sigma^*)$ is locally
analytic.
\end{cor}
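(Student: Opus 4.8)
The plan is to verify directly the definition of a locally analytic representation recalled in \S\ref{sec:principal}: for each $\mu\in\mathscr{O}_\sigma(\Sigma)^*_b$ I must produce a BH-space $W$ of $\mathscr{O}_\sigma(\Sigma)^*_b$ together with a power series expansion of the orbit map $g\mapsto\pi_\sigma^*(g)\mu$, with coefficients in $W$, converging on a neighbourhood of the unit element of $\RG$. Two facts are already in hand. First, $\mathscr{O}_\sigma(\Sigma)^*_b\cong\varinjlim_m\mathscr{O}_\sigma(\Sigma(m))^*_b$ is of compact type with injective transition maps (Proposition \ref{prop:Stein}(2)), so every step $\mathscr{O}_\sigma(\Sigma(m))^*_b\hookrightarrow\mathscr{O}_\sigma(\Sigma)^*_b$ is a BH-space and each $\mu$ lies in some step $\mathscr{O}_\sigma(\Sigma(m))^*$. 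Second, the analyticity statement just deduced from Proposition \ref{prop:pi_sigma analytic}: if $\mu\in\mathscr{O}_\sigma(\Sigma(m))^*$, then $g\mapsto\pi_\sigma^*(g)\mu$ is an $\mathscr{O}_\sigma(\Sigma(nm))^*_b$-valued analytic function on the chart $U_0(\fo)^{-1}$.

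First I would fix $\mu$ and choose $m$ with $\mu\in\mathscr{O}_\sigma(\Sigma(m))^*$. The only mismatch with the definition is that the chart $U_0(\fo)^{-1}$ is naturally centred at $w^{-1}$, where $w=\begin{pmatrix}0 & -I_n\\ I_n & 0\end{pmatrix}$, rather than at the unit element. To transport the expansion to a neighbourhood of $e$ I would exploit the homomorphism property of the contragredient, $\pi_\sigma^*(g)\mu=\pi_\sigma^*(gw^{-1})\,\nu$ with $\nu:=\pi_\sigma^*(w)\mu$. Since $w\in\RG_\fo$, Lemma \ref{lem:gOmegam} gives $\nu\in\mathscr{O}_\sigma(\Sigma(nm))^*$, so the second fact applies to $\nu$ and makes $g'\mapsto\pi_\sigma^*(g')\nu$ an $\mathscr{O}_\sigma(\Sigma(n^2m))^*_b$-valued analytic function on $U_0(\fo)^{-1}$. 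As right translation $g\mapsto gw^{-1}$ is an analytic automorphism of $\RG$ sending $e$ to $w^{-1}$, and hence a neighbourhood of $e$ into the chart $U_0(\fo)^{-1}$, substituting $g'=gw^{-1}$ yields the required $W$-coefficient expansion of $g\mapsto\pi_\sigma^*(g)\mu$ about the unit element, with $W=\mathscr{O}_\sigma(\Sigma(n^2m))^*_b$. This establishes the definition.

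The step I expect to demand the most care is precisely this reconciliation of base points and the attendant tracking of BH-spaces: one must check that the translate $\nu$ still sits at a finite level and that composing the orbit map of $\nu$ with the coordinate change does not leave the Banach step $W$. The first is exactly the content of Lemma \ref{lem:gOmegam}, recorded in the sentence preceding the statement, and the second is automatic, since precomposing a $W$-valued convergent power series with an analytic change of coordinates is again a $W$-valued convergent power series. If one additionally wants the orbit maps to be locally analytic at every point of $\RG$, not merely near $e$, the same translation argument applied at an arbitrary $g_1$ works, provided one passes to a possibly larger step: the image $\pi_\sigma^*(g_1)(W)$ is bounded, hence contained and continuously embedded in a single $\mathscr{O}_\sigma(\Sigma(m'))^*_b$ by regularity of the compact type space, which then serves as the enlarged BH-space.
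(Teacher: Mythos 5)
Your proposal is correct and follows essentially the same route as the paper: realize $\mathscr{O}_\sigma(\Sigma)^*_b$ as the inductive limit of the duals $\mathscr{O}_\sigma(\Sigma(m))^*_b$, which serve as the BH-spaces, use Lemma \ref{lem:gOmegam} to see that $\pi_\sigma^*$ maps finite levels to finite levels, and invoke the analyticity of the dual orbit maps on $U_0(\fo)^{-1}$ deduced from Proposition \ref{prop:pi_sigma analytic}. Your explicit right-translation by $w^{-1}$, with $\nu=\pi_\sigma^*(w)\mu$ kept at a finite level because $w\in\RG_\fo$, supplies a detail the paper passes over silently (the paper even calls $U_0(\fo)$ a neighbourhood of the unit element, although it is centred at $w$), and you carry it out correctly.
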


Finally, we study the \emph{de Rham complex} $\Omega^\cdot (\Sigma)$
of rigid analytic exterior differential forms. Explicitly, let
$0\leqslant r\leqslant n(n+1)/2$,
\begin{align*}
\Omega^1_K &: = \bigoplus _{1\leqslant i\leqslant j\leq
n}K dZ_{ij}, \\
\Omega^r_K &: = \bigwedge^r \Omega^1_K(\Sigma),\\
\Omega^r(\Sigma) &:= \mathscr{O}(\Sigma)\underset{K}\otimes
\Omega^r_K .
\end{align*} As interesting examples, we show that the spaces $\Omega^r(\Sigma)$ as $\RG$-representations
are holomorphic discrete series of $\RG$ (compare
\cite{Schneider1992} \S 3).

We define a $K$-rational representation $\sigma_1$ of $\RH$ on
$\Omega^1_K$: $$\sigma_1(h) dZ_{ij} := \sum_{1\leqslant k < \ell \leq
n}(h_{ik}h_{j\ell} + h_{i\ell}h_{jk}) d Z_{k\ell} + \sum_{k=1}^n
h_{ik}h_{jk} dZ_{kk},$$ or succinctly, $$\sigma_1(h) dZ = h\cdot
dZ\cdot {^th}, \hskip 10pt dZ := \left(dZ_{ij}\right) .$$ Let
$\sigma_r := \bigwedge^r \sigma_1$.

1. For $g = \begin{pmatrix}I_n & z \\
0 & I_n\end{pmatrix}$, we have $g\cdot dZ = d (Z-z) = dZ $.

2. For $g = \begin{pmatrix}{^th^{-1}} & 0 \\
0 & h\end{pmatrix}$, we have $ g\cdot dZ = d \left(hZ\ {^th}\right) = h\cdot
dZ\cdot {^th} = \sigma_1(h) dZ$.

3. For $g = \begin{pmatrix}0 & I_n \\
-I_n & 0 \end{pmatrix}$, we have $g\cdot
dZ = d\left(- Z^{-1}\right) = Z^{-1}\cdot dZ \cdot Z^{-1} =
\sigma_1(Z^{-1}) dZ$ due to  the identity
$d\left(Z^{-1}\right)\cdot Z + Z^{-1}\cdot dZ = 0$.

In view of the decomposition (\ref{eq:decompositionofU0}) of
$U_0$, the discussions above imply that the action of
$\RG$ on $\Omega^r(\Sigma)$ coincides with $\pi_{\sigma_r}$ on
$U_0$ and hence on   $\RG$ as $U_0$ is dense in $\RG$. 

\begin{prop}
Let $1\leqslant r\leqslant n(n+1)/2$ and $\sigma_r$ be defined above. The
$\RG$-action on $\Omega^r(\Sigma)$ coincides with $ \pi_{\sigma_r}$.
\end{prop}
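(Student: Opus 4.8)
The plan is to verify the identity $\sigma_1(h)\,dZ = h\cdot dZ\cdot {}^t h$ gives the correct transformation law on the three types of generators of $\RG$, then invoke the density of $U_0$ in $\RG$ to conclude the coincidence on all of $\RG$. Concretely, both $\pi_{\sigma_r}$ and the natural $\RG$-action on $\Omega^r(\Sigma)$ are continuous $\RG$-representations on the same underlying space $\mathscr{O}(\Sigma)\otimes_K\Omega^r_K$, so it suffices to check they agree on the dense subset $U_0$. Since $\sigma_r = \bigwedge^r\sigma_1$ and the exterior power is functorial, it further suffices to treat the case $r=1$: if the action on $\Omega^1(\Sigma)$ coincides with $\pi_{\sigma_1}$, then taking $r$-th exterior powers yields the result for all $r$.

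First I would compute the pullback of $dZ$ under each of the three matrices appearing in the decomposition \eqref{eq:decompositionofU0}, which is exactly the content of items 1--3 already displayed in the excerpt. For $g=\begin{pmatrix}I_n & z\\ 0 & I_n\end{pmatrix}$ one has $g^{-1}Z = Z-z$ and the automorphy factor $j(g^{-1},Z)=I_n$, so $\pi_{\sigma_1}(g)$ acts by $dZ\mapsto d(Z-z)=dZ$, matching the geometric action. For $g=\begin{pmatrix}{}^t h^{-1} & 0\\ 0 & h\end{pmatrix}$ one has $g^{-1}Z = {}^t h\, Z\, h$ (up to the chosen sign conventions) and $j(g^{-1},Z)=h$, giving $dZ\mapsto h\cdot dZ\cdot {}^t h = \sigma_1(h)\,dZ$. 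For the Weyl element $g=\begin{pmatrix}0 & I_n\\ -I_n & 0\end{pmatrix}$ one uses $g^{-1}Z=-Z^{-1}$ together with the differentiation identity $d(Z^{-1})\cdot Z + Z^{-1}\cdot dZ=0$, so that $d(-Z^{-1}) = Z^{-1}\cdot dZ\cdot Z^{-1}=\sigma_1(Z^{-1})\,dZ$, which is precisely $\pi_{\sigma_1}(g)$ evaluated via $j(g^{-1},Z)=-Z$ (the signs cancel in the symmetric product). The key computational input is the matrix identity for $d(Z^{-1})$ and the bilinear form of $\sigma_1$.

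Having matched the three generators, I would then assemble the general element of $U_0$ via the decomposition \eqref{eq:decompositionofU0} and use the cocycle relation \eqref{eq:automorphyfactor} for the automorphy factor $j$: since both the geometric pullback and $\pi_{\sigma_1}$ respect composition (the former tautologically, the latter by the verification that $\pi_{\sigma_1}$ is a representation, which follows from \eqref{eq:automorphyfactor}), agreement on each factor of the product forces agreement on the product. Thus the two actions coincide on all of $U_0$, hence on the dense subset; and both actions being continuous in $g$ (continuity of $\pi_{\sigma_r}$ is Proposition \ref{prop:picontinuous}, and the geometric action is continuous by the same affinoid estimates), they coincide on all of $\RG=\ov{U_0}$.

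The main obstacle I anticipate is bookkeeping rather than conceptual: one must match the sign and transpose conventions in the definition $\pi_{\sigma}(g)\psi(Z)=\sigma(j(g^{-1},Z))^{-1}\psi(g^{-1}Z)$ against the naive pullback of differential forms, taking care that $\sigma_1$ was defined through $h\mapsto h\cdot dZ\cdot{}^t h$ rather than its inverse. In particular the Weyl-element case requires checking that the symmetric bilinear action absorbs the sign in $j(g^{-1},Z)=-Z$, since $\sigma_1(-Z^{-1})=(-Z^{-1})\cdot dZ\cdot(-{}^t Z^{-1})=Z^{-1}\cdot dZ\cdot {}^t Z^{-1}$, so the minus signs indeed cancel in pairs. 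Once the conventions are aligned, the density argument is immediate and requires no further analysis.
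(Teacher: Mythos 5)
Your proposal is correct and takes essentially the same route as the paper: verify the transformation of $dZ$ on the three factors appearing in the decomposition (\ref{eq:decompositionofU0}), assemble via the automorphy cocycle relation (\ref{eq:automorphyfactor}), and conclude by density of $U_0$ in $\RG$ (with the reduction to $r=1$ via $\sigma_r=\bigwedge^r\sigma_1$ implicit in the paper as well). Your only slips are cosmetic bookkeeping, as you anticipated: $j(g^{-1},Z)$ equals $h^{-1}$ (not $h$) in the Levi case and $Z$ (not $-Z$) for the Weyl element, but since $\sigma_1(h^{-1})^{-1}=\sigma_1(h)$ and $\sigma_1(-Z^{-1})=\sigma_1(Z^{-1})$, your stated conclusions are unaffected.
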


\section{Duality}\label{sec:Duality}

In the following, we  assume that $K$ is spherically complete. Let
$(V, \sigma)$ be a $d$-dimensional $K$-rational representation of
$\RH$. We choose a basis $v_1, \cdots, v_d$ of $V$; we denote by
$v_1^*, \cdots, v_d^*$ the corresponding dual basis of the dual
space $V^*$. Let $(V^*, \sigma^*)$ denote the dual representation of
$(V, \sigma)$.

\subsection{The duality operator $I_{\sigma}$} \label{sec:Isigma}

For $Z\in \Sigma$ and $v^*\in V^*$, let $\varphi_{Z, v^*}$ be the
$V^*$-valued locally analytic function on $\CP$:
\begin{equation}\label{eq:varphiZv*}\varphi_{Z, v^*}(X,
Y):=\sigma^*(XZ+Y)v^*.\end{equation}

Let $B^0_{\sigma^*}(\CP, V^*)$ be the subspace of
$C^\an_{\sigma^*}(\CP, V^*)$ spanned by $\varphi_{Z, v^*}$,
$B_{\sigma^*}(\CP, V^*)$ the closure of $B^0_{\sigma^*}(\CP, V^*)$.
Clearly $B _{\sigma^*}(\CP, V^*)$ is $\RG$-invariant.

For any continuous linear functional $\xi\in B _{\sigma^*}(\CP,
V^*)^* $, we define a $V$-valued function on $\Sigma$:
\begin{equation} \label{eq:Isigma}
I_\sigma(\xi) (Z) := \sum_{k=1}^d \langle \varphi_{Z, v^*_k}, \xi
\rangle v_k, \hskip 10pt Z\in \Sigma.
\end{equation}
One verifies that $I_\sigma(\xi)$ is independent of the choice of the
basis  $\{v_k \}_{k=1}^d$. Evidently, $I_\sigma$ is injective.

\begin{lem} \label{lem:Iequivariant}
$I_\sigma$ is $\RG$-equivariant, that is,
$$I_\sigma(T_{\sigma^*}^*(g) \xi) = \pi_\sigma (g) I_\sigma (\xi),$$
for any $g\in \RG$.
\end{lem}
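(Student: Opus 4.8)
The plan is to reduce the identity to the generators $\varphi_{Z,v^*}$ of $B^0_{\sigma^*}(\CP,V^*)$ and to observe that both sides transform the same way. First I would record the basis-free reading of (\ref{eq:Isigma}): $I_\sigma(\xi)(Z)$ is the unique vector $w\in V$ characterized by
\begin{equation*}
v^*(w) = \langle \varphi_{Z, v^*}, \xi\rangle \qquad \text{for every } v^* \in V^*,
\end{equation*}
where $v^*(\cdot)$ denotes the $V^*$–$V$ pairing and $\langle\,\cdot\,,\,\cdot\,\rangle$ the $B_{\sigma^*}(\CP,V^*)$–dual pairing. This description is visibly independent of the basis, and it reduces the claim to showing that $I_\sigma(T_{\sigma^*}^*(g)\xi)(Z)$ and $\pi_\sigma(g)I_\sigma(\xi)(Z)$ pair equally against every $v^*$.

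The heart of the matter is the behaviour of the generators under $T_{\sigma^*}$. Writing $g^{-1}=\begin{pmatrix}A&B\\C&D\end{pmatrix}$ (entries of $g^{-1}$, so $j(g^{-1},Z)=CZ+D$ and $g^{-1}Z=(AZ+B)(CZ+D)^{-1}$), I would combine (\ref{eq:defTsigma}) and (\ref{eq:varphiZv*}) to get
\begin{align*}
(T_{\sigma^*}(g^{-1})\varphi_{Z,v^*})(X,Y) &= \varphi_{Z,v^*}\big((X,Y)g^{-1}\big) \\
&= \sigma^*\big(X(AZ+B) + Y(CZ+D)\big)v^*.
\end{align*}
The key algebraic step is the factorization $X(AZ+B)+Y(CZ+D)=(X\cdot g^{-1}Z + Y)(CZ+D)$, which holds because $AZ+B=(g^{-1}Z)(CZ+D)$; both factors are invertible precisely because $Z$ and $g^{-1}Z$ lie in $\Sigma$. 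Since $\sigma^*$ is multiplicative on $\GL(n)$, this yields the clean identity
\begin{equation*}
T_{\sigma^*}(g^{-1})\varphi_{Z,v^*} = \varphi_{g^{-1}Z,\ \sigma^*(j(g^{-1},Z))v^*}.
\end{equation*}

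With this in hand the conclusion is essentially formal. Using the adjoint convention $\langle \varphi, T_{\sigma^*}^*(g)\xi\rangle = \langle T_{\sigma^*}(g^{-1})\varphi, \xi\rangle$ (the same one fixed earlier for $\pi_\sigma^*$) and writing $M:=j(g^{-1},Z)$, the characterization of $I_\sigma$ applied at the point $g^{-1}Z$ gives
\begin{align*}
v^*\big(I_\sigma(T_{\sigma^*}^*(g)\xi)(Z)\big) &= \langle T_{\sigma^*}(g^{-1})\varphi_{Z,v^*}, \xi\rangle = \langle \varphi_{g^{-1}Z,\, \sigma^*(M)v^*}, \xi\rangle \\
&= (\sigma^*(M)v^*)\big(I_\sigma(\xi)(g^{-1}Z)\big).
\end{align*}
Invoking the defining property of the contragredient, $(\sigma^*(M)v^*)(u)=v^*(\sigma(M)^{-1}u)$, the right-hand side becomes $v^*\big(\sigma(j(g^{-1},Z))^{-1}I_\sigma(\xi)(g^{-1}Z)\big)=v^*\big(\pi_\sigma(g)I_\sigma(\xi)(Z)\big)$ by (\ref{eq:discrete}). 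As $v^*$ is arbitrary, the two vectors agree, which is the asserted equivariance.

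I expect the only genuine obstacle to be the matrix factorization and the bookkeeping around it: one must verify that $\sigma^*$ may be split across the product (legitimate since all matrices involved are invertible on $\Sigma$) and that the pairing with $\xi$ makes sense, i.e.\ that $T_{\sigma^*}(g^{-1})$ preserves $B_{\sigma^*}(\CP,V^*)$. The latter is already guaranteed by the $\RG$-invariance of $B_{\sigma^*}(\CP,V^*)$ noted above, and here it is immediate because $T_{\sigma^*}(g^{-1})\varphi_{Z,v^*}$ is itself one of the generators.
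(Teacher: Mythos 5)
Your proof is correct and takes essentially the same route as the paper: unwind the adjoint to $\langle T_{\sigma^*}(g^{-1})\varphi_{Z,v^*},\xi\rangle$, use the factorization $X(AZ+B)+Y(CZ+D)=(X\cdot g^{-1}Z+Y)\,j(g^{-1},Z)$ and multiplicativity of $\sigma^*$ to get $T_{\sigma^*}(g^{-1})\varphi_{Z,v^*}=\varphi_{g^{-1}Z,\,\sigma^*(j(g^{-1},Z))v^*}$, then conclude from the definition of $\pi_\sigma$. The only cosmetic difference is that you organize the final step through the basis-free pairing characterization of $I_\sigma$, whereas the paper changes basis to $v_{k;g}=\sigma(j(g^{-1},Z))v_k$ and invokes the basis-independence of $I_\sigma$; the underlying identity is identical.
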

\begin{proof}
Let $g = \begin{pmatrix}A &B \\ C & D\end{pmatrix} \in \RG$. We have
\begin{align*}  I_\sigma(T_{\sigma^*}^*(g) \xi)(Z) 
= \ & \sum_{k=1}^d  \langle \varphi_{Z, v_k^*}, T_{\sigma^*}^*(g) \xi \rangle v_k  \\
= \ & \sum_{k=1}^d  \langle T_{\sigma^*}(g^{-1})\varphi_{Z, v_k^*}, \xi \rangle v_k \\
=\ & \sum_{k=1}^d  \langle \sigma^*(X({^tD}Z - {^tB})+Y (-{^tC}Z + {^tA}))v^*_k, \xi \rangle v_k \\
=\ & \sigma (j(g^{-1}, Z))^{-1} \Big(\sum_{k=1}^d  \langle
\sigma^*(X \cdot g^{-1}Z + Y) (v_{k; g})^*, \xi \rangle v_{k; g}\Big) \\
=\ & (\pi_\sigma (g) I_\sigma (\xi))(Z),
\end{align*}
where $v_{k; g} = \sigma (j(g^{-1}, Z))v_k$.
\end{proof}

\begin{prop}\label{prop:Isigma}$\ $ 

(1) For any continuous linear functional $\xi\in B _{\sigma^*}(\CP,
V^*)^* $, $I_\sigma(\xi)$ is a $V$-valued rigid analytic function on
$\Sigma$. 

(2) $ I_\sigma $ is a continuous homomorphism of
$\RG$-representations from $(B _{\sigma^*}(\CP, V^*)^*_b,
T^*_{\sigma^*})$ to $(\mathscr{O}_\sigma(\Sigma), \pi _{\sigma})$.
\end{prop}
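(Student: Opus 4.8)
The equivariance asserted in part (2) is exactly Lemma \ref{lem:Iequivariant}, so the whole proposition reduces to two analytic facts: that each $I_\sigma(\xi)$ lands in $\mathscr{O}_\sigma(\Sigma)$, and that the resulting map is continuous. The plan is to deduce both from a single statement: that the assignment $Z\mapsto\varphi_{Z,v^*}$ is, on every admissible affinoid $\Sigma(m)$, a rigid analytic map into one fixed BH-space $W$ of $B_{\sigma^*}(\CP,V^*)$ -- that is, an element of the completed tensor product $\mathscr{O}(\Sigma(m))\,\widehat{\otimes}\,W$, whose expansion then automatically has the shape (\ref{eq:psiZexpansion}) with coefficients in $W$. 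Granting this, part (1) is immediate: for a fixed $\xi\in B_{\sigma^*}(\CP,V^*)^*$ the restriction $\xi|_W$ lies in $W^*$, and applying $\id\,\widehat{\otimes}\,\xi|_W$ sends the family to an element of $\mathscr{O}(\Sigma(m))\otimes V$; since its coordinates are the $\langle\varphi_{Z,v_k^*},\xi\rangle$, this shows $I_\sigma(\xi)|_{\Sigma(m)}\in\mathscr{O}_\sigma(\Sigma(m))$ for every $m$, hence $I_\sigma(\xi)\in\mathscr{O}_\sigma(\Sigma)$.

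To produce $W$ I would transport $\varphi_{Z,v^*}$ through the isomorphism $\iota^\circ$ of (\ref{eq:iotacirc}) into a function on $\CL$. On the distinguished chart $\Sym(n,\fo)$ one computes $(\varphi_{Z,v^*}\circ\iota_0)(z)=\sigma^*(Z-z)v^*$, whose entries are polynomials in the $Z_{ij}-z_{ij}$ times an integer power of $\det(Z-z)$. For $Z\in\Sigma(m)$ and $z\in\Sym(n,\fo)$ the pair $(I_n,-z)$ lies in $\CP_\fo$, so $|\det(Z-z)|\geqslant|\varpi|^{nm}$ is bounded below uniformly; choosing $h=h(m)$ large enough that $\det(Z-z)$ cannot move below this bound on a ball of radius $|\varpi|^{h}$, the function $z\mapsto\sigma^*(Z-z)v^*$ is analytic of radius $|\varpi|^{h}$ and uniformly bounded for $Z\in\Sigma(m)$. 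The finitely many remaining charts $\overline{\CU}_\kappa$ I would handle through their translations $g_\kappa$: by the automorphy relation (\ref{eq:automorphyfactor}) and Lemma \ref{lem:gOmegam}, on chart $\kappa$ the function is again of the form $\sigma^*(g_\kappa Z-z)v^*$ up to an automorphy factor, with $g_\kappa Z$ in some $\Sigma(nm)$, so the same uniform estimate applies after enlarging $h$. Thus all the $\varphi_{Z,v^*}$ with $Z\in\Sigma(m)$ lie, uniformly boundedly, in the Banach space of $\sigma^*$-equivariant $V^*$-valued functions on $\CL$ that are analytic of radius $|\varpi|^{h(m)}$ on each chart; intersecting with $B_{\sigma^*}(\CP,V^*)$ turns this into a BH-space $W$ of the latter.

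Finally I would check that $Z\mapsto\varphi_{Z,v^*}$ is genuinely rigid analytic into $W$. Expanding $\sigma^*(Z-z)v^*$ in powers of $z$ as $\sum_{\underline r}a_{\underline r}(Z)\,z^{\underline r}$, each coefficient $a_{\underline r}(Z)$ is, by Corollary \ref{cor:det-1} (for the negative powers of $\det$) together with the supremum-norm estimates of (\ref{eq:continuous estimate}), a genuine element of $\mathscr{O}(\Sigma(m))\otimes V^*$, and the coefficients tend to $0$ in $W$-norm as $|\underline r|\to\infty$ uniformly -- precisely the condition placing the family in $\mathscr{O}(\Sigma(m))\,\widehat{\otimes}\,W$. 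Continuity in part (2) then follows formally: the restriction $B_{\sigma^*}(\CP,V^*)^*_b\to W^*_b$ is continuous (dual to $W\hookrightarrow B_{\sigma^*}(\CP,V^*)$), and contraction against this fixed element of the completed tensor product is continuous $W^*_b\to\mathscr{O}(\Sigma(m))$ for the supremum norm; ranging over $m$ gives continuity into $\mathscr{O}_\sigma(\Sigma)=\varprojlim_m\mathscr{O}_\sigma(\Sigma(m))$. The main obstacle throughout is exactly this uniform control: pinning down one BH-space $W$ and one radius of analyticity valid for all $Z\in\Sigma(m)$ and across every chart of $\CL$, which is what upgrades the pointwise membership $\varphi_{Z,v^*}\in B_{\sigma^*}(\CP,V^*)$ to a bounded rigid analytic family.
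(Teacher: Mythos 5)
Your reduction rests on a single claim that is not actually established: that the family $Z\mapsto\varphi_{Z,v^*}$ lies in $\mathscr{O}(\Sigma(m))\,\widehat{\otimes}\,W$ for a BH-space $W$ \emph{of} $B_{\sigma^*}(\CP,V^*)$. Look at the expansion you exhibit: chart by chart it reads $\sum_{\underline r}a_{\underline r}\otimes \underline z^{\underline r}v^*_\ell$, so the legs in the function-space direction are (translated, cut off on one chart) monomials. These lie in the Banach space $E$ of chart-wise analytic $V^*$-valued functions, hence in the ambient $C^\an_{\sigma^*}(\CP,V^*)$, but there is no reason for them to lie in $B_{\sigma^*}(\CP,V^*)$, which is by definition only the closed span of the functions $\varphi_{Z,v^*}$. (For $\SL(2,F)$ and $\chi_s$ with $s\geqslant 0$ that closed span is the space of polynomials of degree $\leqslant s$, and a monomial supported on a single chart is certainly not in it.) So your estimates prove membership in $\mathscr{O}(\Sigma(m))\,\widehat{\otimes}\,E$ with $E$ a BH-space of $C^\an_{\sigma^*}(\CP,V^*)$, not of $B_{\sigma^*}(\CP,V^*)$; the operator $\id\,\widehat{\otimes}\,\xi|_W$ therefore cannot be applied, since $\xi$ acts only on $B_{\sigma^*}(\CP,V^*)$, and both part (1) and the continuity in part (2) are left unproven. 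Note also that upgrading ``every value $\varphi_{Z,v^*}$ lies in the closed subspace $B_{\sigma^*}(\CP,V^*)$'' to ``the family lies in $\mathscr{O}(\Sigma(m))\,\widehat{\otimes}\,(E\cap B_{\sigma^*}(\CP,V^*))$'' is not formal: it requires a separation/exactness argument for $\widehat{\otimes}$ which itself leans on Hahn--Banach.

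This is exactly the point where the paper spends the standing hypothesis that $K$ is spherically complete, but on the dual side rather than on the function side: by Hahn--Banach the adjoint $i^*\colon C^\an_{\sigma^*}(\CP,V^*)^*_b\to B_{\sigma^*}(\CP,V^*)^*_b$ of the inclusion is surjective, and by the open mapping theorem (both duals are Fr\'echet, Corollary \ref{cor:compacttype}) it is open; hence it suffices to prove analyticity and continuity for functionals on the full space $C^\an_{\sigma^*}(\CP,V^*)$, and continuity of $I_\sigma$ is inherited from that of $I_\sigma\circ i^*$. For such functionals your termwise pairing with the chart-by-chart expansion is legitimate, because the monomial coefficients do lie in the space the functional acts on; this is then precisely the paper's computation (decomposition over the charts $\CU_\kappa$ via (\ref{eq:iotacirc}), expansion of $\sigma^*(Z-z)v_k^*$ by Corollary \ref{cor:det-1}, and the bounds (\ref{eq:bnkl}), (\ref{eq:intznvl})). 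In short: your analytic estimates are sound and essentially coincide with the paper's, but you must either insert the Hahn--Banach/open-mapping reduction at the outset, or supply the genuinely nontrivial argument that pointwise membership in $B_{\sigma^*}(\CP,V^*)$ forces the family into $\mathscr{O}(\Sigma(m))\,\widehat{\otimes}\,(E\cap B_{\sigma^*}(\CP,V^*))$. As stated, the central claim is a gap, not a lemma.
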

\begin{proof} Let  $i$ denote  the inclusion: $B _{\sigma^*}(\CP, V^*)
\hookrightarrow C^\an_{\sigma^*}(\CP , V^*)$, and $i^*$ be its adjoint
operator. Because of our assumption that $K$ is spherically
complete, the Hahn-Banach Theorem (\cite {Schneider} Corollary 9.4)
implies that $i^*$ is surjective. Since $C^\an_{\sigma^*}(\CP ,
V^*)^*_b$ and $B _{\sigma^*}(\CP, V^*)^*_b$ are Fr\'echet spaces
(Corollary \ref{cor:compacttype}), $i^*$ is open (from the open
mapping theorem (\cite{Schneider} Proposition 8.6)). Consequently, the
continuity of $I_\sigma\circ i^*$ implies that of $I_\sigma$.
Therefore,
(1) and (2) are equivalent to:\\
($1'$) $I_\sigma\circ i^*(\xi)\in \mathscr{O}_\sigma(\Sigma)$ for
any $\xi\in C^\an_{\sigma^*}(\CP, V^*)^* $;
\\
($2'$) $I_\sigma\circ i^*: (C^\an_{\sigma^*}(\CP , V^*)^*_b,
T^*_{\sigma^*}) \ra (\mathscr{O}_\sigma(\Sigma), \pi _{\sigma})$ is
a continuous homomorphism of $\RG$-representations.
\\For brevity, we  still denote $I_\sigma\circ i^*$ by
$I_\sigma$. For $\xi\in C^\an_{\sigma^*}(\CP, V^*)^* $, we write
$I_\sigma(\xi)$ in the form of integral:
\begin{align*}I_{\sigma}(\xi)(Z)&=   \sum_{k=1}^d \int_{\CP} \varphi_{Z; v^*_k} d \xi\cdot v_k \\
& =   \sum_{k=1}^d \sum_{\kappa } \int_{\CU_{\kappa }}
\varphi_{Z; v^*_k} d \xi\cdot v_k\\
& =  \sum_{\kappa } \pi_{\sigma}(g_{\kappa }) \Big( \sum_{k=1}^d
\int_{\CU_{\kappa }\cdot g_{\kappa }} \varphi_{Z; (v_{k; g_\kappa
})^*} d (T_{\sigma^*}(g_{\kappa }^{-1})\xi)\cdot v_{k; g_\kappa
}\Big),
\end{align*}
where the disjoint open covering $\{\CU_\kappa \}_\kappa$ of $\CP$ and $g_\kappa$ are defined
in \S \ref{sec:principal}, and  $v_{k; g_\kappa }$ is defined in the proof of Lemma
\ref{lem:Iequivariant}. Therefore it suffices to consider
\begin{equation}\label{eq:integerU}
\sum_{k=1}^d \int_{\CU} \varphi_{Z; v^*_k} d \xi'\cdot v_k.
\end{equation}
where $\CU$ are taken to be $\CU_{\kappa }\cdot g_{\kappa } $ and
$\xi'$ is the image of $\xi$ under $C^\an_{\sigma^*}(\CP, V^*)^*_b
\ra C^\an_{\sigma^*}(\CU, V^*)^*_b$.

For the open subset $\overline{\CU} = \mathrm{pr}^\CP_\CL (\CU)$ of
$\Sym(n, \fo)$, we have the isomorphism induced from the section
$\iota_0$ (compare (\ref{eq:iotacirc})):
\begin{equation}\label{eq:isoCO} C^\an_{\sigma^*}(\CU , V^*)^*_b\cong C^\an(\overline{\CU}, V^*)^*_b.
\end{equation}
Then (\ref{eq:integerU}) is equal to $$\overline{I}_{\sigma,
\overline{\CU}}(\overline{\xi})(Z) := \sum_{k=1}^d
\int_{\overline{\CU}} (\sigma^*(Z-z)v^*_k) d \overline{\xi}(z)\cdot
v_k,$$ where $\overline{\xi}$ is the image of $\xi'$ in
$C^\an(\overline{\CU}, V^*)^*_b$ via the isomorphism
(\ref{eq:isoCO}). Therefore, it suffices to prove that $\overline{I}_{\sigma,
\overline{\CU}}(\overline{\xi})$ is rigid analytic on $\Sigma(m)$,
and that the map
\begin{eqnarray*}C^\an(\overline{\CU}, V^*)^*_b &\ra& \mathscr{O}_\sigma(\Sigma(m))\\
\overline{\xi} &\mapsto& \overline{I}_{\sigma,
\overline{\CU}}(\overline{\xi})|_{\Sigma(m)}
\end{eqnarray*}
is continuous ($\RG$-equivariance is already proven in Lemma
\ref{lem:Iequivariant}).

As $\sigma^*$ is algebraic, there is a nonnegative integer $t$ and
polynomials $Q_{k\ell}$ $(1\leqslant k,\ell\leqslant d)$ in $h_{ij}$ $(1\leq
i, j\leqslant n)$ with coefficients in $K$, such that
$$\sigma^*(h)v_k^*=\sum_{\ell=1}^d\det(h)^{-t}Q_{k\ell}(h)v^*_\ell.$$
We expand
$$\det(Z-z)^{-t}Q_{k\ell}(Z-z)=
\sum_{{\underline{r}}}\alpha_{{\underline{r}}, k \ell} (Z)\cdot
{\underline{z}}^{{\underline{r}}}.$$
It is evident from Corollary \ref{cor:det-1} that $\alpha_{{\underline{r}},
k \ell}\in \mathscr{O}(\Sigma(m))$ and
\begin{equation}\label{eq:bnkl}\underset{|{\underline{r}}|\ra
\infty}{\lim}\|\alpha_{{\underline{r}}, k
\ell}\|_{\mathscr{O}(\Sigma(m))} = 0.
\end{equation}
Moreover, there is a constant $c_m>0$, depending only on $m$,
$\sigma$ and $\{v_k\}_{k=1}^d$, such that
\begin{equation}\label{eq:bnkl2} \|\alpha_{{\underline{r}}, k \ell}\|_{\mathscr{O}(\Sigma(m))}\leqslant c_m .
\end{equation}
Then
\begin{align}\nonumber \overline{I}_{\sigma, \overline{\CU}}(\overline{\xi})(Z) &= \sum_{k,\
\ell=1}^d \int_{\overline{\CU}} \det(Z-z)^{-t}Q_{k\ell}(Z-z) d
\overline{\xi}(z)\cdot
v_k\\
&= \label{eq:expansionbarI}\sum_{k=1}^d \Big(\sum_{\ell=1}^d
\sum_{{\underline{r}}}\Big(\int_{\overline{\CU}}
 {\underline{z}}^{{\underline{r}}}\cdot v^*_\ell d\overline{\xi}(z)\Big)\cdot \alpha_{{\underline{r}}, k\ell } (Z) \Big) v_k.
\end{align}
Since $\|{\underline{z}}^{{\underline{r}}}\|_{C^\an(\overline{\CU})}
\leqslant 1$, we have
\begin{equation}\label{eq:intznvl} \left|\int_{\overline{\CU}}
{\underline{z}}^{{\underline{r}}}\cdot v^*_\ell
d\overline{\xi}(z)\right|\leqslant \|v^*_\ell\|_{V^*}\cdot
\|\overline{\xi}\|_{C^\an(\overline{\CU}, V^*)^*_b}.\end{equation}
In conclusion, (\ref{eq:bnkl}) and (\ref{eq:intznvl}) imply that the expansion
(\ref{eq:expansionbarI}) of $\overline{I}_{\sigma,
\overline{\CU}}(\overline{\xi})$ converges in
$\mathscr{O}_\sigma(\Sigma(m))$, whereas (\ref{eq:bnkl2}) and
(\ref{eq:intznvl}) imply
$$\left\|\overline{I}_{\sigma, \overline{\CU}}(\overline{\xi}) \right\|_{\mathscr{O}_\sigma(\Sigma(m))}\leq
\max_{1\leqslant k, \ell\leqslant d} c_m \|v_\ell^*\|_{V^*} \|v_k\|_V\cdot
\|\overline{\xi}\|_{C^\an(\overline{\CU}, V^*)^*_b}.
$$ The continuity follows.
\end{proof}

\subsection{The duality operator $J_\sigma$ and the image of $I_\sigma$}

Let $\mathscr{N}_\sigma (\Sigma)$ denote the image of $I_\sigma$. In
this section, we propose to determine $\mathscr{N}_\sigma (\Sigma)$.
For this, we  introduce $J_\sigma$, the adjoint operator of
$I_\sigma$:
an injective continuous linear operator from 
$\mathscr{N}_\sigma(\Sigma)^*_b$ to $(B_{\sigma^*}(\CP,
V^*)^*_b)_b^{*}\cong B_{\sigma^*}(\CP, V^*)$ ($B_{\sigma^*}(\CP,
V^*)$ is reflexive according to Corollary \ref{cor:compacttype}).

First, we need to find the formula for $J_\sigma$.

For any $\mu\in \mathscr{N}_\sigma(\Sigma)^*$ and $\xi\in
B_{\sigma^*}(\CP, V^*) ^*$, we have 
\begin{equation} \label{eq:dual} \langle J_\sigma(\mu), \xi\rangle=\langle I_\sigma(\xi), \mu\rangle.
\end{equation}

For $(X, Y)\in \CP$ and $v\in V$, we define the Dirac distribution
$\xi_{(X,Y), v}$, which is a continuous linear functional
of $ B_{\sigma^*}(\CP, V^*)$,  as follows:
$$\langle \varphi, \xi_{(X,Y), v}\rangle = \langle v,
\varphi(X,Y)\rangle_V, \hskip 10pt \varphi \in B_{\sigma^*}(\CP,
V^*), $$ and a $V$-valued rigid analytic function $\psi_{(X,Y), v}$
on $\Sigma$:
\begin{equation}\label{eq:psiXYv}\psi_{(X,Y), v} (Z) :=
\sigma(XZ+Y)^{-1}v.\end{equation}

\begin{lem}\label{lem:Jsigma-xi} $$I_\sigma(\xi_{(X,Y),
v})=\psi_{(X,Y), v} . $$
\end{lem}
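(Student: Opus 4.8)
The plan is to verify the identity by a direct pointwise evaluation, unwinding every definition at a fixed $Z \in \Sigma$. First I would substitute the Dirac distribution $\xi_{(X,Y), v}$ into the defining formula (\ref{eq:Isigma}) for $I_\sigma$, which gives
$$I_\sigma(\xi_{(X,Y), v})(Z) = \sum_{k=1}^d \langle \varphi_{Z, v_k^*}, \xi_{(X,Y), v}\rangle\, v_k.$$
By the defining property of $\xi_{(X,Y), v}$ together with (\ref{eq:varphiZv*}), each coefficient collapses to
$\langle \varphi_{Z, v_k^*}, \xi_{(X,Y), v}\rangle = \langle v, \varphi_{Z, v_k^*}(X,Y)\rangle_V = \langle v, \sigma^*(XZ+Y) v_k^*\rangle_V.$

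The key step is to invoke the duality convention defining $(V^*, \sigma^*)$: for any invertible $g$ and any $v\in V$, $w^*\in V^*$ one has $\langle v, \sigma^*(g) w^*\rangle_V = \langle \sigma(g)^{-1} v, w^*\rangle_V$. Applying this with $g = XZ+Y$ (invertible precisely because $Z \in \Sigma$, by the very definition of $\mathbf{\Sigma}$, so that $\sigma(XZ+Y)^{-1}$ is meaningful) yields
$$I_\sigma(\xi_{(X,Y), v})(Z) = \sum_{k=1}^d \langle \sigma(XZ+Y)^{-1} v, v_k^*\rangle_V\, v_k.$$
Finally, since $\{v_k^*\}_{k=1}^d$ is the dual basis to $\{v_k\}_{k=1}^d$, the reconstruction identity $\sum_{k=1}^d \langle w, v_k^*\rangle_V v_k = w$, valid for every $w\in V$, applied to $w = \sigma(XZ+Y)^{-1} v$ collapses the sum to $\sigma(XZ+Y)^{-1} v$, which is exactly $\psi_{(X,Y), v}(Z)$ by (\ref{eq:psiXYv}).

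There is essentially no computational obstacle; the only points requiring a word of care are matters of well-definedness rather than content. On the one hand, the left-hand side makes sense because $\xi_{(X,Y), v}$ is a genuine continuous functional on $B_{\sigma^*}(\CP, V^*)$, point evaluation being continuous on the compact-type space $C^\an_{\sigma^*}(\CP, V^*)$, and $I_\sigma$ is defined on the dual by Proposition \ref{prop:Isigma}. On the other hand, the invertibility of $XZ+Y$ for $Z\in\Sigma$ and $(X,Y)\in\CP$ is exactly what the definition of $\mathbf{\Sigma}$ guarantees, so that $\psi_{(X,Y), v}$ is a legitimate element of $\mathscr{O}_\sigma(\Sigma)$. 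The equality $I_\sigma(\xi_{(X,Y), v}) = \psi_{(X,Y), v}$ then holds pointwise on $\Sigma$, hence as rigid analytic functions.
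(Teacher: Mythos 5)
Your proof is correct and follows essentially the same route as the paper's: substitute the Dirac distribution into the defining formula (\ref{eq:Isigma}), move $\sigma^*(XZ+Y)$ across the pairing to get $\sigma(XZ+Y)^{-1}v$, and collapse the sum via the dual-basis reconstruction identity. The only difference is that you additionally spell out the well-definedness points (invertibility of $XZ+Y$ on $\Sigma$, continuity of evaluation functionals), which the paper leaves implicit; this is harmless and, if anything, a slight improvement in completeness.
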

\begin{proof} 
	 By definition (\ref{eq:Isigma}), {\allowdisplaybreaks
	 	\begin{align*}
	 		\big(I_\sigma(\xi_{(X,Y), v})\big)(Z) &= \sum_{k=1}^r\langle
	 		\varphi_{Z, v_k^*}, \xi_{(X,Y),
	 			v}\rangle v_k  \\
	 		&= \sum_{k=1}^r \langle v, \sigma^*(XZ+Y)v_k^* \rangle_V \cdot v_k \\
	 		&= \sum_{k=1}^r \langle \sigma(XZ+Y)^{-1} v, v_k^*\rangle_V \cdot v_k  \\
	 		&= \sigma(XZ+Y)^{-1} v = \psi_{(X, Y),v}(Z).
	 	\end{align*}
	 }
\end{proof}

Let $\mathscr{N}_\sigma^0(\Sigma)$ denote the subspace of
$\mathscr{O}_\sigma(\Sigma)$ spanned by $\psi_{(X,Y), v}$ for all
$(X,Y)\in \CP $ and $v\in V$. Clearly $\mathscr{N}^0_\sigma(\Sigma)$
is $\RG$-invariant. Lemma \ref{lem:Jsigma-xi} implies
$\mathscr{N}_\sigma^0(\Sigma)\subset \mathscr{N}_\sigma(\Sigma)$.

\begin{prop} For any continuous linear
functional $\mu\in \mathscr{N}_\sigma(\Sigma)^*$, we have
\begin{equation}\label{eq:Jsigma}
J_{\sigma}(\mu)(X, Y)=\sum_{k=1}^d \langle\psi_{(X,Y), v_k},
\mu\rangle v_k^*.
\end{equation}
\end{prop}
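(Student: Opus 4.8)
The plan is to pin down the $V^*$-valued function $J_\sigma(\mu)$ by testing it against the Dirac distributions $\xi_{(X,Y), v}$, which are precisely the functionals that read off the value of a function in $B_{\sigma^*}(\CP, V^*)$ at a point $(X,Y)$. First I would invoke the reflexivity of $B_{\sigma^*}(\CP, V^*)$ (Corollary \ref{cor:compacttype}), so that $J_\sigma(\mu)$ is a genuine element of $B_{\sigma^*}(\CP, V^*)$, i.e.\ an honest $V^*$-valued locally analytic function on $\CP$, and the adjunction pairing $\langle J_\sigma(\mu), \xi\rangle$ appearing in (\ref{eq:dual}) is literally the evaluation of the functional $\xi$ on this function. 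Since each $\xi_{(X,Y), v}$ is a continuous linear functional on $B_{\sigma^*}(\CP, V^*)$, it is a legitimate test element to substitute into (\ref{eq:dual}).

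The core computation is to evaluate (\ref{eq:dual}) at $\xi = \xi_{(X,Y), v}$. On the right-hand side, Lemma \ref{lem:Jsigma-xi} gives $I_\sigma(\xi_{(X,Y), v}) = \psi_{(X,Y), v}$, hence
$$\langle J_\sigma(\mu), \xi_{(X,Y), v} \rangle = \langle I_\sigma(\xi_{(X,Y), v}), \mu \rangle = \langle \psi_{(X,Y), v}, \mu \rangle.$$
On the left-hand side, the defining property of the Dirac distribution yields $\langle J_\sigma(\mu), \xi_{(X,Y), v} \rangle = \langle v, J_\sigma(\mu)(X,Y) \rangle_V$. Equating the two expressions produces the pointwise identity
$$\langle v, J_\sigma(\mu)(X,Y) \rangle_V = \langle \psi_{(X,Y), v}, \mu \rangle, \qquad v \in V.$$

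To extract the asserted formula (\ref{eq:Jsigma}), I would specialize $v$ to the basis vectors and expand the value $J_\sigma(\mu)(X,Y) \in V^*$ in the dual basis $\{v_k^*\}$. Writing $J_\sigma(\mu)(X,Y) = \sum_k a_k v_k^*$ and using $\langle v_j, v_k^* \rangle_V = \delta_{jk}$, the identity above with $v = v_j$ gives $a_j = \langle \psi_{(X,Y), v_j}, \mu \rangle$, which is exactly (\ref{eq:Jsigma}). The only points demanding care are the bookkeeping of the nested dualities—tracking which pairing is which, and that the reflexivity identification genuinely sends $J_\sigma(\mu)$ to a function whose values can be tested—together with the observation that pairing against all $v \in V$ recovers the $V^*$-valued value pointwise. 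Once these are in place, the conclusion is forced by Lemma \ref{lem:Jsigma-xi} and the definitions; the substantive work having already been absorbed into Lemma \ref{lem:Jsigma-xi}, I expect no real obstacle here beyond this bookkeeping.
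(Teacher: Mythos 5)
Your proposal is correct and follows essentially the same route as the paper's proof: evaluate the duality formula (\ref{eq:dual}) on the Dirac distributions $\xi_{(X,Y),v}$, apply Lemma \ref{lem:Jsigma-xi} to replace $I_\sigma(\xi_{(X,Y),v})$ by $\psi_{(X,Y),v}$, and expand $J_\sigma(\mu)(X,Y)$ in the dual basis. The only cosmetic difference is direction (you derive the formula from the pairing identity, while the paper computes the claimed sum down to $J_\sigma(\mu)(X,Y)$) and your explicit mention of reflexivity, which the paper records just before the proposition.
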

\begin{proof} We have
	{\allowdisplaybreaks
		\begin{align*} \sum_{k=1}^r  \langle\psi_{(X,Y), v_k},
			\mu\rangle v_k^* &= \sum_{k=1}^r \langle I_\sigma(\xi_{(X,Y),
				v_k}), \mu
			\rangle v_k^*\hskip 30pt (\text{Lemma \ref{lem:Jsigma-xi}})\\
			&= \sum_{k=1}^r\langle J_\sigma(\mu), \xi_{(X,Y), v_k}\rangle v_k^*
			\hskip 30pt
			(\text{Duality formula (\ref{eq:dual})}) \\
			&=  \sum_{k=1}^r \langle v_k, J_\sigma(\mu)(X, Y) \rangle_V \cdot
			v_k^* =
			J_\sigma(\mu)(X, Y).
		\end{align*} }
\end{proof}

It follows from (\ref{eq:Jsigma}) that $J_\sigma$ factors through
$\mathscr{N}_\sigma^0(\Sigma)^*$ and (\ref{eq:Jsigma}) defines an
injection from $\mathscr{N}^0_\sigma(\Sigma)^*_b $ to $
B_{\sigma^*}(\CP, V^*)$. Because $J_\sigma$ is injective and
$\mathscr{N}_\sigma(\Sigma)^*_b\ra \mathscr{N}^0_\sigma(\Sigma)^*_b$
is surjective (the Hahn-Banach Theorem), we have
$\mathscr{N}^0_\sigma(\Sigma)^*_b = \mathscr{N}_\sigma(\Sigma)^*_b$.
The following lemma then follows from the Hahn-Banach theorem.

\begin{lem}\label{lem:CN0denseinCN}$\mathscr{N}^0_\sigma(\Sigma)$ is dense in $\mathscr{N}_\sigma(\Sigma)$.
\end{lem}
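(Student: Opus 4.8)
The plan is to deduce density of $\mathscr{N}^0_\sigma(\Sigma)$ in $\mathscr{N}_\sigma(\Sigma)$ as a formal consequence of the Hahn-Banach theorem, using the equality of dual spaces $\mathscr{N}^0_\sigma(\Sigma)^*_b = \mathscr{N}_\sigma(\Sigma)^*_b$ that has just been established. The key conceptual point is that a subspace of a locally convex space is dense precisely when every continuous linear functional vanishing on it vanishes identically; since $K$ is assumed spherically complete, the Hahn-Banach theorem (\cite{Schneider} Corollary 9.4) guarantees we have enough continuous functionals to detect non-density, so this criterion is available.

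First I would let $W$ denote the closure of $\mathscr{N}^0_\sigma(\Sigma)$ inside $\mathscr{N}_\sigma(\Sigma)$ and suppose for contradiction that $W \neq \mathscr{N}_\sigma(\Sigma)$, so there exists $\psi \in \mathscr{N}_\sigma(\Sigma) \setminus W$. Then I would invoke Hahn-Banach to produce a continuous linear functional $\mu \in \mathscr{N}_\sigma(\Sigma)^*$ with $\langle \psi, \mu\rangle \neq 0$ but $\mu|_{W} = 0$; in particular $\mu$ vanishes on $\mathscr{N}^0_\sigma(\Sigma)$, so $\langle \psi_{(X,Y),v}, \mu\rangle = 0$ for all $(X,Y)\in\CP$ and $v\in V$. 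By the explicit formula (\ref{eq:Jsigma}) for $J_\sigma$, this says exactly that $J_\sigma(\mu) = 0$ as an element of $B_{\sigma^*}(\CP, V^*)$.

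The crux is then the injectivity of $J_\sigma$, which is already in hand: $J_\sigma$ is the adjoint of the injective operator $I_\sigma$, and its injectivity was noted at the start of this subsection. Hence $J_\sigma(\mu) = 0$ forces $\mu = 0$ in $\mathscr{N}_\sigma(\Sigma)^*$, contradicting $\langle \psi, \mu\rangle \neq 0$. This contradiction shows $W = \mathscr{N}_\sigma(\Sigma)$, i.e. $\mathscr{N}^0_\sigma(\Sigma)$ is dense.

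I do not anticipate a genuine obstacle here, as the statement is essentially a repackaging of the already-proven identity $\mathscr{N}^0_\sigma(\Sigma)^*_b = \mathscr{N}_\sigma(\Sigma)^*_b$: that equality says the two spaces have the same continuous dual, which is a standard reformulation of density. The only point requiring care is making sure the Hahn-Banach separation is legitimately applicable, which is why spherical completeness of $K$ was assumed at the outset of \S\ref{sec:Duality}; alternatively, one may simply observe that since the restriction map $\mathscr{N}_\sigma(\Sigma)^*_b \to \mathscr{N}^0_\sigma(\Sigma)^*_b$ was shown to be bijective (injective by what precedes, surjective by Hahn-Banach), no nonzero functional on $\mathscr{N}_\sigma(\Sigma)$ can annihilate $\mathscr{N}^0_\sigma(\Sigma)$, which is the density criterion verbatim.
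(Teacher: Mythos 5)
Your proof is correct and is essentially the paper's own argument: the paper proves the restriction map $\mathscr{N}_\sigma(\Sigma)^*_b \to \mathscr{N}^0_\sigma(\Sigma)^*_b$ is bijective (injective because $J_\sigma$ factors through it via (\ref{eq:Jsigma}) and is itself injective, surjective by Hahn--Banach) and then cites Hahn--Banach for density, which is precisely your contradiction argument unwound. One small correction to your justification: the injectivity of $J_\sigma$ does not follow from $I_\sigma$ being injective (the adjoint of an injective operator need not be injective); it holds because $I_\sigma$ surjects onto $\mathscr{N}_\sigma(\Sigma)$, which is by definition its image, so any $\mu$ with $J_\sigma(\mu)=\mu\circ I_\sigma=0$ vanishes on all of $\mathscr{N}_\sigma(\Sigma)$.
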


\begin{thm} \label{thm:Iisomorphism}~ 
	
(1) $I_\sigma $ is an isomorphism from $B _{\sigma^*}(\CP, V^*)^*_b$
onto $\mathscr{N}_\sigma(\Sigma)$.

(2) $\mathscr{N}_\sigma(\Sigma)$ is the closure of
$\mathscr{N}^0_\sigma(\Sigma)$ in $\mathscr{O}_\sigma(\Sigma)$.
\end{thm}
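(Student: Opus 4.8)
The plan is to reduce both assertions to a single analytic statement—that $I_\sigma$ is a topological embedding—and then to extract that statement from the explicit description of $I_\sigma$ and its adjoint $J_\sigma$. Write $E := B_{\sigma^*}(\CP, V^*)$, which is of compact type and hence reflexive by Corollary \ref{cor:compacttype}, so that $E^*_b$ is a nuclear Fr\'echet space; the target $\mathscr{O}_\sigma(\Sigma)$ is nuclear Fr\'echet by Proposition \ref{prop:OOmega}, and $I_\sigma$ is continuous and injective by Proposition \ref{prop:Isigma}. Put $\mathscr{N}_\sigma(\Sigma) = I_\sigma(E^*_b)$. Since $\mathscr{N}^0_\sigma(\Sigma)$ is dense in $\mathscr{N}_\sigma(\Sigma)$ by Lemma \ref{lem:CN0denseinCN}, assertion (2) is exactly the statement that $\mathscr{N}_\sigma(\Sigma)$ is closed in $\mathscr{O}_\sigma(\Sigma)$. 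Moreover, once closedness is known, $\mathscr{N}_\sigma(\Sigma)$ is itself a nuclear Fr\'echet space, so the continuous bijection $I_\sigma \colon E^*_b \to \mathscr{N}_\sigma(\Sigma)$ is automatically a topological isomorphism by the open mapping theorem (\cite{Schneider} Proposition 8.6), which is (1). Thus it suffices to prove that $I_\sigma$ is a topological embedding with closed image.

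Next I would record the explicit inverse that drives everything. Reading off the $v_k$-coordinate in the defining formula (\ref{eq:Isigma}) gives, for every $Z \in \Sigma$ and $v^* \in V^*$, the identity $\langle \varphi_{Z, v^*}, \xi\rangle = \langle I_\sigma(\xi)(Z), v^*\rangle$; since by definition $E$ is the closed span of the functions $\varphi_{Z, v^*}$ (cf. (\ref{eq:varphiZv*})), this recovers $\xi$ from $\psi = I_\sigma(\xi)$ and exhibits $I_\sigma^{-1}$ on $\mathscr{N}_\sigma(\Sigma)$ concretely. Dually, the point evaluations $\mu_{Z, w}\colon \psi \mapsto \langle \psi(Z), w\rangle$ are continuous on $\mathscr{O}_\sigma(\Sigma)$, and a short computation from (\ref{eq:Jsigma}) and (\ref{eq:psiXYv}) yields $J_\sigma(\mu_{Z,w}) = \varphi_{Z, w}$. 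Hence the image of the adjoint $J_\sigma$ contains $B^0_{\sigma^*}(\CP, V^*)$ and is therefore dense in $E$; equivalently, since $K$ is spherically complete, the Hahn-Banach theorem (\cite{Schneider} Corollary 9.4) shows that the adjoint of $I_\sigma$ has dense image. By the closed-range dictionary this dense image is merely equivalent to the injectivity of $I_\sigma$, which we already know, so it reduces the remaining content of the theorem to upgrading this dense image to all of $E$—equivalently, to the closedness of $\mathscr{N}_\sigma(\Sigma)$.

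To close the argument I would prove directly that $I_\sigma$ is a topological embedding, using the inverse formula together with an estimate dual to the one in Proposition \ref{prop:Isigma}. Given a convergent sequence $\psi_n = I_\sigma(\xi_n) \to \psi_\infty$ in $\mathscr{O}_\sigma(\Sigma)$, the set $\{\psi_n\}$ is bounded, hence bounded in each supremum norm $\|\cdot\|_{\mathscr{O}(\Sigma(m))}$. The key step is the reverse estimate: these norms control the seminorms of $\xi_n$ in $E^*_b$. Here one uses that a bounded subset of the compact type space $E$ lies in a single BH-space, together with the identification (\ref{eq:isoCO}) reducing the pairing to a bounded integration over $\overline{\CU} \subset \Sym(n, \fo)$, so that $\langle \varphi, \xi_n\rangle$ is controlled by $\sup_{Z \in \Sigma(m)} \|\psi_n(Z)\|_V$ uniformly for $\varphi$ in the given bounded set. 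This makes $\{\xi_n\}$ equicontinuous; since $\langle \varphi_{Z, v^*}, \xi_n\rangle = \langle \psi_n(Z), v^*\rangle \to \langle \psi_\infty(Z), v^*\rangle$ converges on the dense generating family $\{\varphi_{Z, v^*}\}$, the limit extends to a continuous functional $\xi_\infty \in E^*$ with $\xi_n \to \xi_\infty$ in $E^*_b$ and $I_\sigma(\xi_\infty) = \psi_\infty$. Thus $\psi_\infty \in \mathscr{N}_\sigma(\Sigma)$, so the image is closed, and the same estimate shows $I_\sigma^{-1}$ is continuous.

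The main obstacle is precisely this reverse estimate, i.e. the continuity of $I_\sigma^{-1}$ (equivalently, surjectivity of $J_\sigma$ onto $E$, equivalently closedness of $\mathscr{N}_\sigma(\Sigma)$). Everything else is formal: the passage (2) $\Rightarrow$ (1) is the open mapping theorem, the density of the adjoint image and the identification of $\mathscr{N}_\sigma^0(\Sigma)^*_b$ with $\mathscr{N}_\sigma(\Sigma)^*_b$ rest on Hahn-Banach and are available because $K$ is spherically complete, and the $\RG$-equivariance is already Lemma \ref{lem:Iequivariant}. I expect the whole difficulty to be quantitative and to be exactly the dual of the boundedness estimates already established in the proof of Proposition \ref{prop:Isigma}.
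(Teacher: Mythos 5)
Your formal reductions are sound and in fact run parallel to the paper's: (2) is equivalent to closedness of the image of $I_\sigma$, (1) then follows from the open mapping theorem between Fr\'echet spaces, and your observations about $J_\sigma$ and the density of the adjoint image correspond to Lemma \ref{lem:CN0denseinCN}. The gap is exactly where you locate it, and it is not filled: the ``reverse estimate'' $\sup_{\varphi\in\CB}|\langle\varphi,\xi_n\rangle|\leqslant C\,\sup_{Z\in\Sigma(m)}\|\psi_n(Z)\|_V$ for a bounded set $\CB\subset B_{\sigma^*}(\CP,V^*)$ cannot be extracted from the BH-space property of bounded sets together with (\ref{eq:isoCO}), because the only access you have to $\xi_n$ is through its values on the special functions $\varphi_{Z,v^*}$. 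To dominate $\langle\varphi,\xi_n\rangle$ for an \emph{arbitrary} $\varphi$ in a bounded set, you would need to write $\varphi$ as a limit of combinations $\sum_j c_j\varphi_{Z_j,v_j^*}$ with all $Z_j$ confined to a single $\Sigma(m)$ and with coefficients controlled by the norm of $\varphi$ --- a quantitative reproducing-kernel statement that is in no sense a formal dual of the estimates in Proposition \ref{prop:Isigma}. Indeed, the asserted inequality is verbatim the continuity of $I_\sigma^{-1}$ on the image, i.e. the theorem restated, so your argument is circular at its only nontrivial point. Soft tools cannot rescue it: Banach--Steinhaus would require pointwise boundedness of $\{\xi_n\}$ on all of $B_{\sigma^*}(\CP,V^*)$, and you only have it on the dense subspace $B^0_{\sigma^*}(\CP,V^*)$, which does not suffice without the very equicontinuity you are trying to prove.

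The paper supplies precisely the missing mechanism by turning the estimate around so that it only has to be checked on generators. Instead of pairing arbitrary $\varphi$ against arbitrary $\xi$, one bounds the specific functionals $\xi_{(X,Y),v}=I_\sigma^{-1}(\psi_{(X,Y),v})$: transporting to $\CL$ by the section $\iota$, one gets $\|({\iota^{\circ}}^{-1})^*(\xi_{(X,Y),v})\|_{E_{\CI}(\CL,V^*)^*_b}\leqslant\|v\|_V$ uniformly for $(X,Y)$ in the compact set $\CK$, since evaluation at a point of $\CK$ is dominated by each chart-wise sup norm. Then the lattice $\mathscr{L}=\sum_{k}\sum_{(X,Y)\in\CK}\fo_K\,\psi_{(X,Y),v_k}$ is shown to have nontrivial open interior in $\mathscr{N}^0_\sigma(\Sigma)$ (compactness of $\CK$, positivity of $r_m$), so mapping $\mathscr{L}$ into a bounded set forces $I_\sigma^{-1}|_{\mathscr{N}^0_\sigma(\Sigma)}$ to be continuous, $\mathscr{N}^0_\sigma(\Sigma)$ being metrizable and hence bornological. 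Both (1) and (2) then fall out by passing to completions: the completion of $\mathscr{N}^0_\sigma(\Sigma)$ is its closure in the Fr\'echet space $\mathscr{O}_\sigma(\Sigma)$, while the completion of the span of the Dirac functionals is all of $B_{\sigma^*}(\CP,V^*)^*_b$, by Lemma \ref{lem:CN0denseinCN}. This generator-level bound is the input your sequential argument would need before any equicontinuity claim can be made.
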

\begin{proof}

Let $B(\CL, V^*) := \iota^{\circ}(B_{\sigma^*}(\CP, V^*))$. We 
still denote $\iota^{\circ}|_{B_{\sigma^*}(\CP, V^*)} $ by $\iota
^{\circ}$.

Let $\CI$ be any (finite) disjoint open chart covering
$\{\overline{\CU}_i\}_{i}$ of $\CL$. We recall that $C^\an(\CL,
V^*)$ is defined as the inductive limit of the $K$-Banach algebra $E_{\CI}(\CL, V^*) = \prod_{i} \CO
(\overline{\CU}_i, V^*)$, indexed with all the $\CI$,
where $\CO (\overline{\CU}_i, V^*)$
denotes the space of $K$-analytic functions on $\overline{\CU}_i$
(cf. \cite{Feaux} 2.1.10 and \cite{Schneider-Teitelbaum2002} \S 2).
The inductive limit structure is naturally induced onto $B(\CL,
V^*)$, that is, $B(\CL, V^*) = {\varinjlim}_{\CI} E_{\CI}(\CL,
V^*).$ Moreover, the dual space $B(\CL, V^*)^*_b$ is the projective
limit of $E_{\CI}(\CL, V^*)^*_b.$

Let $\mathscr{N}^0_\sigma(\Sigma(m))$ be the image of
$\mathscr{N}^0_\sigma (\Sigma )$ in $\mathscr{O}_\sigma(\Sigma(m))$.

Considering $\pi_\sigma(g^{-1}) v_k$, we see that the map $(X, Y)
\mapsto \psi_{(X,Y), v_k}$ is an
$\mathscr{O}_\sigma(\Sigma(m))$-valued locally analytic map on $\CP$
(see Proposition \ref{prop:pi_sigma analytic}). Define $$r_m = \underset {1\leqslant k\leqslant d} {\min}\underset {(X,Y)\in
\CK} {\inf} \|\psi_{(X,Y), v_k}\|_{\mathscr{O}_\sigma(\Sigma(m))}$$
Since $\CK $ is
compact, $r_m$ is positive. Let $\mathscr{L}$ be the lattice $ \sum_{k=1}^d
\underset{(X,Y)\in \CK}{\sum} \fo_K\cdot \psi_{(X,Y), v_k} $ in
$\mathscr{N}^0_\sigma(\Sigma)$. For each $m$, the image of
$\mathscr{L}$ in $\mathscr{N}^0_\sigma(\Sigma(m))$ contains the ball
of radius $r_m$ centered at zero, and therefore the interior of
$\mathscr{L}$ is a nontrivial open lattice.

Consider
\begin{eqnarray*} ({\iota ^{\circ}}^{-1})^*  \circ I_\sigma^{-1} |_{\mathscr{N}^0_\sigma(\Sigma )}: \mathscr{N}^0_\sigma(\Sigma )& \ra & B (\CL, V^*)^*_b\\
\psi_{(X, Y), v} &\mapsto& ({\iota ^{\circ}}^{-1})^* (\xi_{(X,Y),
v}),
\end{eqnarray*}

For $(X,Y)\in \CK$,
\begin{align*}\|({\iota
^{\circ}}^{-1})^* (\xi_{(X,Y), v})\|_{E_{\CI}(\CL, V^*)^*_b}& =
\max _{\overline{\varphi} \in\ E_{\CI}(\CL, V^*)}\frac{\langle
\overline{\varphi}, ({\iota
^{\circ}}^{-1})^* (\xi_{(X,Y), v})\rangle }{\|\overline{\varphi}\|_{E_{\CI}(\CL, V^*)}}\\
& = \max _{\varphi\in\ {\iota ^{\circ}}^{-1}(E_{\CI}(\CL, V^*))}
\frac{\langle {\varphi}, \xi_{(X,Y), v} \rangle }{\|\iota ^{\circ}(\varphi)\|_{E_{\CI}(\CL, V^*)}}\\
&= \underset{\varphi\in\ {\iota ^{\circ}}^{-1}(E_{\CI}(\CL, V^*)) }
{\max}\frac{\langle v, \varphi(X,Y)\rangle_V}{\underset{(X', Y')\in
\CK} {\max}\|\varphi(X',
Y')\|_{V^*}}\\
&\leqslant \|v\|_V.
\end{align*}

Therefore the image of $\mathscr{L}$ under $({\iota
^{\circ}}^{-1})^*  \circ I_\sigma^{-1}
|_{\mathscr{N}^0_\sigma(\Sigma )}$ in  $B (\CL, V^*)^*_b$ is
bounded, since its image in $E_{\CI} (\CL, V^*)^*_b$ are all
norm-bounded by $\underset{1\leqslant k\leqslant d}{\max}\|v_k\|_V$. Because
$\mathscr{N}^0_\sigma(\Sigma)$ is metrizable, it is bornological
(\cite{Schneider} Proposition 6.14), and therefore
$I_\sigma^{-1}|_{\mathscr{N}^0_\sigma(\Sigma )}$ is continuous
(\cite{Schneider} Proposition 6.13). $\mathscr{N}^0_\sigma(\Sigma)$
is isomorphic to $I_\sigma^{-1}(\mathscr{N}^0_\sigma(\Sigma))$, then
their completions are isomorphic, which, in view of Lemma
\ref{lem:CN0denseinCN}, must be $\mathscr{N}_\sigma(\Sigma)$ and $B
_{\sigma^*}(\CP, V^*)^*_b$, respectively.
\end{proof}




\delete{ By (\ref{eq:Jsigma}), {\allowdisplaybreaks
\begin{eqnarray*}
\big(I_\sigma(\m\CU_{Z,v^*})\big)(X,Y) &=&
\sum_{k=1}^r\langle \psi_{(X,Y), v_k}, \m\CU_{Z,v^*}\rangle v_k^*  \\
&=& \sum_{k=1}^r \langle\sigma(XZ+Y)^{-1}v_k, v^*\rangle_V \cdot v_k^* \\
&=& \sum_{k=1}^r \langle v_k, \sigma^*(XZ+Y) v^*\rangle_V \cdot v_k^*  \\
&=& \sigma^*(XZ+Y) v^* \\
&=& \varphi_{Z,v^*}(X, Y).
\end{eqnarray*}
} }


\begin{cor}
$J_\sigma$ is an isomorphism of $\RG$-representations from
$(\mathscr{N}_\sigma(\Sigma)^*_b, \pi_{\sigma}^*)$ onto
$(B_{\sigma^*}(\CP, V^*), T_{\sigma^*})$.
\end{cor}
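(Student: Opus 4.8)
The plan is to deduce the corollary formally by dualizing Theorem \ref{thm:Iisomorphism}, since by its very construction $J_\sigma$ is nothing but the transpose ${}^tI_\sigma$ of $I_\sigma$. Indeed, the defining relation (\ref{eq:dual}) reads $\langle J_\sigma(\mu), \xi\rangle = \langle I_\sigma(\xi), \mu\rangle$, which is exactly the statement that $J_\sigma$ is the adjoint of $I_\sigma$; it lands in $\big(B_{\sigma^*}(\CP, V^*)^*_b\big)^*_b$, and this space is canonically $B_{\sigma^*}(\CP, V^*)$ because the latter is reflexive (Corollary \ref{cor:compacttype}).

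First I would establish that $J_\sigma$ is a topological isomorphism. By Theorem \ref{thm:Iisomorphism}(1), $I_\sigma \colon B_{\sigma^*}(\CP, V^*)^*_b \to \mathscr{N}_\sigma(\Sigma)$ is a topological isomorphism, hence admits a continuous inverse $I_\sigma^{-1}$. Transposing the identities $I_\sigma \circ I_\sigma^{-1} = \id$ and $I_\sigma^{-1}\circ I_\sigma = \id$, and using that the transpose is a contravariant functor on the relevant reflexive spaces of compact type and nuclear Fr\'echet spaces (Corollary \ref{cor:compacttype} together with the Schneider--Teitelbaum duality), one sees that $J_\sigma = {}^tI_\sigma$ is a topological isomorphism with inverse ${}^t(I_\sigma^{-1})$. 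In particular $J_\sigma$ is surjective and open, which, together with the injectivity and continuity already recorded when $J_\sigma$ was introduced, yields the isomorphism of topological vector spaces.

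Next I would verify the $\RG$-equivariance by transposing the intertwining relation of Lemma \ref{lem:Iequivariant}, written as $I_\sigma \circ T^*_{\sigma^*}(g) = \pi_\sigma(g)\circ I_\sigma$. Applying the transpose and using ${}^t(AB) = {}^tB\,{}^tA$ gives ${}^t\big(T^*_{\sigma^*}(g)\big)\circ J_\sigma = J_\sigma \circ {}^t\big(\pi_\sigma(g)\big)$. From the defining formulas for the contragredient representations one has ${}^t(\pi_\sigma(g)) = \pi_\sigma^*(g^{-1})$, while reflexivity identifies double transposes with the identity, so that ${}^t\big(T^*_{\sigma^*}(g)\big) = T_{\sigma^*}(g^{-1})$. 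Substituting these and then replacing $g$ by $g^{-1}$ (legitimate as the relation holds for every $g\in\RG$) produces $J_\sigma \circ \pi_\sigma^*(g) = T_{\sigma^*}(g)\circ J_\sigma$, which is precisely the claimed equivariance.

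I expect the main obstacle to lie not in any new estimate but in the two functional-analytic bookkeeping points: that the transpose of a topological isomorphism remains one in this nonarchimedean setting, and that the reflexive identification $\big(B_{\sigma^*}(\CP, V^*)^*_b\big)^*_b \cong B_{\sigma^*}(\CP, V^*)$ correctly matches the double contragredient $\big(T^*_{\sigma^*}\big)^*$ with $T_{\sigma^*}$. Both are guaranteed by the compact type and nuclear Fr\'echet duality recorded in Corollary \ref{cor:compacttype}, so the corollary is a purely formal consequence of Theorem \ref{thm:Iisomorphism} and Lemma \ref{lem:Iequivariant}.
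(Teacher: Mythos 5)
Your proposal is correct and follows exactly the route the paper intends: the corollary is stated without proof precisely because it is the formal dualization of Theorem \ref{thm:Iisomorphism}(1), using that $J_\sigma$ is by construction the transpose of $I_\sigma$, the reflexivity of $B_{\sigma^*}(\CP, V^*)$ from Corollary \ref{cor:compacttype}, and the transposed equivariance of Lemma \ref{lem:Iequivariant}. Your bookkeeping of the contragredient actions (including the substitution $g \mapsto g^{-1}$) is accurate, so nothing is missing.
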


\begin{rem}\label{rem:irred}
We conjecture that $(\mathscr{N}_\sigma(\Sigma), \pi_\sigma)$ and
$(B_{\sigma^*}(\CP, V^*) , T_{\sigma^*})$ are topologically
irreducible $\RG$-representations if $\sigma$ is irreducible. These
are conjectured and claimed by Morita for $\SL(2, F)$
(\cite{MoritaII} Corollary after Theorem 3 and \cite{MoritaIII}
Theorem 1 (i).) However, there is a serious gap in his proof of
\cite{MoritaIII} Proposition 3. Schneider and Teitelbaum gave the
first valid proof of \cite{MoritaIII} Theorem 1 (i) in
\cite{Schneider-Teitelbaum2002} when $F=\BZ_p$.
\end{rem}

\section{Morita's theory for $\SL(2,F)$}\label{sec:SL2Morita}
In this section, we study Morita's theory for $ \Sp(2,F) = \SL(2,F)
$. 
We start with reviewing the constructions of   holomorphic discrete series
and principal series for $\SL(2,F)$ from \cite{MoritaI},
\cite{MoritaII} and \cite{MoritaIII} in accordance with our
notations. Then we focus on the duality established in \textsection
\ref{sec:Duality} for $\SL(2,F)$ and   its relation with
Morita's duality and Casselman's intertwining operator.

\subsection{The $p$-adic upper half-plane} \label{sec:upperhalfplane}

{For more details, we refer the readers to \cite{MoritaI} \textsection 2 and \cite{DasTei:Lecture} 1.2.}

In the following, let $\RG=\SL(2, F)$ and $\RG_\fo = \SL(2, \fo)$.

Let $\Sigma := K -  F$ be the $p$-adic upper half-plane,
$\CP$ be the set of nonzero pairs $(x,y)\in F\times F$, {$\CL = F^{\times}\backslash \CP =
\BP^1(F)$}, $\CP_\fo\subset \CP$ be the set
of pairs $(x,y)\in \fo\times \fo$ such that $(x,y) \nequiv (0, 0)
\mod \varpi $. As usual, we define a $\RG$-action on $\Sigma$ by
$$g\cdot Z := (aZ+b)(cZ+d)^{-1}, \hskip 10pt g=\begin{pmatrix}a & b \\
c & d\end{pmatrix}\in \RG.$$

Let $m$ be a nonnegative integer. For a pair $(x, y)\in\CP_\fo $,
we define
$$
B^-(m; x,y) := \left\{ Z\in K \ |\ |xZ+y| < \max \{1, |Z|\} \
|\varpi^{m}| \right\}.$$

Let \begin{align*} \Sigma (m)& :=  \bigcap_{(x, y)\in\CP_\fo} K -  B^- (m; x, y)\\
&=  \left\{ Z \in \Sigma \ |\ | xZ+y | \geqslant  \max \{1, |Z|\}\
|\varpi^{ m}| \text{ for any } (x, y)\in\CP_\fo  \right\}.
\end{align*}

{It is not hard to verify that the 
admissible affinoid covering $\{\Sigma(m)\}_{m=0}^\infty$ of $\Sigma$ coincide with that defined
in  \cite{DasTei:Lecture} 1.2.}

Let $\mathscr{O}(\Sigma(m))$ be the space of $K$-valued rigid
analytic functions on $\Sigma(m)$. Explicitly, on taking partial
fractional expansion of each summand in (\ref{eq:psiZexpansion}), one sees
  that $\psi\in \mathscr{O}(\Sigma(m))$ is a $K$-valued functions
on $\Sigma(m)$ that has an expansion in the form:
$$\psi (Z) = \sum_{i=0}^{\infty} a_i^{(\infty)} Z^i + \sum _{j=1}^{\ell} \sum_{i=-1}^{-\infty} a_i^{(j)}(Z-z_j)^i,$$
where $\ell \geqslant 0$, $a_i^{(\infty)}, a_i^{(j)}\in K$, $z_j\in F$, and the expansion
converges with respect to the supremum norm. The space of $K$-rigid
analytic functions on $\Sigma$ is the projective limit of
$\mathscr{O} (\Sigma(m))$.



\delete{ For $y\in \fo$, $B^-(m,; 1, -y)$ is defined by the inequality
$$|Z-y| < |\varpi^m|.$$ 


For $x\in F $ with $|\varpi|\geqslant |x| \geqslant |\varpi^m|$, $B^-(m,; -x,
1)$ is defined by the inequality $$|-xZ+1| < |\varpi^m\|Z|
\Leftrightarrow |Z - x^{-1}| < |x^{-2}\|\varpi^m|.$$ 

For $x\in F $ with $ |x| < |\varpi^m|$, the inequality defining
$B^-(m,; -x, 1)$ is equivalent to $$|Z| > |\varpi^{-m}|.$$ 


}

\subsection{Holomorphic discrete series of $\SL(2,F)$}

Let $s$ be an integer. We define the holomorphic discrete series
$(\mathscr{O}(\Sigma), \pi_s)$ of $\RG$ (see (\ref{eq:discrete});
compare \cite{MoritaI} \textsection 3-1.):
\begin{equation}\label{eq:discrete series}
\pi_s(g)\psi(Z) := (-cZ+a)^{-s}\psi((dZ-b)(-cZ+a)^{-1}),
\end{equation}
with $g= \begin{pmatrix}a &b \\ c & d\end{pmatrix}\in \RG$ and
$\psi \in \mathscr{O}(\Sigma)$. $\pi_s$ is a continuous
representation of $\RG$.

Let $\mathscr{N}^0_s(\Sigma)$ be the subspace of
$\mathscr{O}(\Sigma)$ spanned by $1$ and $\psi^{(s)}_{z}(Z) :=
(Z-z)^{-s}$  for all $z \in F$ (see (\ref{eq:psiXYv})), and let
$\mathscr{N}_s(\Sigma)$ be the closure of $\mathscr{N}^0_s(\Sigma)$.
$\mathscr{N}_s(\Sigma)$ is $\RG$-invariant.

If $s\leqslant 0$, $\mathscr{N}_s(\Sigma)$ is obviously the space of
polynomial functions  of degree $\leqslant -s$.

If $s > 0$, let $\tilde{\mathscr{N}}^0_s(\Sigma)$
be the subspace of $\mathscr{O}(\Sigma)$ consisting of all rational
functions $\psi$ that has a partial fractional expansion of the
form
$$\psi (Z) = \sum_{i=0}^{\infty} a_i^{(\infty)} Z^i + \sum _{j=1}^{\ell} \sum_{i=-s}^{-\infty} a_i^{(j)}(Z-z_j)^i,$$
where the sum is finite, with $\ell \geqslant 0$, $z_j\in F$ and
$a_i^{(\infty)}, a_i^{(j)}\in K$. One verifies that $\tilde{\mathscr{N}}^0_s(\Sigma)$ is
$\RG$-invariant. Moreover, let $\tilde{\mathscr{N}}_s(\Sigma)$ be the closure
of $\tilde{\mathscr{N}}^0_s(\Sigma)$ in $\mathscr{O}(\Sigma)$.

The next lemma follows immediately from \cite{MoritaI} Theorem 2
(i).

\begin{lem}\label{lem:tildeCN}
Let $s$ be a positive integer. $\tilde{\mathscr{N}}_s(\Sigma)$ is the smallest $\RG$-invariant closed
subspace of $\mathscr{O}(\Sigma)$ containing $1$.
\end{lem}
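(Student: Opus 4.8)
The plan is to realize the smallest $\RG$-invariant closed subspace of $\mathscr{O}(\Sigma)$ containing $1$ as the closed $K$-span of the $\pi_s$-orbit of $1$, and then to identify that closure with $\tilde{\mathscr{N}}_s(\Sigma)$. Since each $\pi_s(g)$ is continuous, any closed $\RG$-invariant $W\ni 1$ contains $\overline{\mathrm{span}_K\{\pi_s(g)1 : g\in\RG\}}$; this closed span is itself closed, $\RG$-invariant and contains $1$, so it is the smallest such subspace, call it $W_0$. A direct computation from (\ref{eq:discrete series}) gives $\pi_s(g)1 = (-cZ+a)^{-s}$ for $g=\begin{pmatrix}a&b\\c&d\end{pmatrix}$: taking $c=0$ yields the constants, while for $c\neq 0$ one has $(-cZ+a)^{-s}=(-c)^{-s}(Z-a/c)^{-s}$, and $a/c$ ranges over all of $F$ (e.g. $g=\begin{pmatrix}z&-1\\1&0\end{pmatrix}$ gives $a/c=z$). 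Hence $\mathrm{span}_K\{\pi_s(g)1\}=\mathscr{N}^0_s(\Sigma)$ and $W_0=\mathscr{N}_s(\Sigma)$. The lemma therefore reduces to the identity $\tilde{\mathscr{N}}_s(\Sigma)=\mathscr{N}_s(\Sigma)$.

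Because $\mathscr{N}^0_s(\Sigma)\subseteq\tilde{\mathscr{N}}^0_s(\Sigma)$, the inclusion $\mathscr{N}_s(\Sigma)\subseteq\tilde{\mathscr{N}}_s(\Sigma)$ is automatic, and $\tilde{\mathscr{N}}_s(\Sigma)$ is a closed $\RG$-invariant subspace containing $1$. The essential point is the reverse inclusion, for which it suffices to show $\tilde{\mathscr{N}}^0_s(\Sigma)\subseteq\overline{\mathscr{N}^0_s(\Sigma)}$, i.e. that every higher-order pole $(Z-z)^{-s-k}$ ($z\in F$, $k\geq 1$) and every monomial $Z^i$ ($i\geq 1$) lies in the closed span of $1$ and the simple poles $(Z-w)^{-s}$. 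As convergence in $\mathscr{O}(\Sigma)=\varprojlim_m \mathscr{O}(\Sigma(m))$ is tested on each affinoid $\Sigma(m)$, it is enough to produce these limits in each $\mathscr{O}(\Sigma(m))$ under the supremum norm.

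For the poles I would differentiate in the location of the pole. On $\Sigma(m)$ the quantity $\inf_Z|Z-z|$ is positive, so for $z'\in F$ near $z$ the expansion $(Z-z')^{-s}=\sum_{k\geq 0}\binom{-s}{k}(-(z'-z))^k(Z-z)^{-s-k}$ converges in $\mathscr{O}(\Sigma(m))$; the difference quotient $\bigl((Z-z')^{-s}-(Z-z)^{-s}\bigr)/(z'-z)$ then tends to $-s(Z-z)^{-s-1}$ as $z'\to z$ in $F$, the tail being controlled by the factor $(z'-z)$. Iterating with higher finite differences places all $(Z-z)^{-s-k}$ in $\overline{\mathscr{N}^0_s(\Sigma)}$. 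For the monomials I would instead let $|z|\to\infty$ in $F$: writing $(-z)^s(Z-z)^{-s}=\sum_{k\geq 0}c_k(Z/z)^k$ with $c_k=\binom{s+k-1}{k}$, which lies in $\mathscr{N}^0_s(\Sigma)$ up to the scalar $(-z)^s\in F^\times$, one extracts $Z^i$ inductively as $\lim_{|z|\to\infty}c_i^{-1}z^i\bigl[(-z)^s(Z-z)^{-s}-\sum_{k<i}c_kz^{-k}Z^k\bigr]$, the tail again vanishing on $\Sigma(m)$ since $|Z|\leq|\varpi|^{-m}$ there while $|z|^{i-k}\to 0$ for $k>i$. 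This yields $\tilde{\mathscr{N}}^0_s(\Sigma)\subseteq\mathscr{N}_s(\Sigma)$, hence $\tilde{\mathscr{N}}_s(\Sigma)=\mathscr{N}_s(\Sigma)=W_0$.

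The main obstacle is precisely this density statement: the generators of $\tilde{\mathscr{N}}^0_s(\Sigma)$ beyond the simple poles — the higher poles and the positive-degree polynomials — do not occur in the $\pi_s$-orbit of $1$ at all, so they must be recovered as norm-limits, and the verification rests on the quantitative control of the expansions (\ref{eq:psiZexpansion}) in $\|\,\cdot\,\|_{\mathscr{O}(\Sigma(m))}$ supplied by the affinoid description (\ref{eq:Omegam}). Exactly this approximation is the content of \cite{MoritaI} Theorem 2 (i), from which the lemma follows at once; I would invoke it for the estimates rather than reproduce them.
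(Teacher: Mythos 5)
Your proposal is correct, but it takes a genuinely different route from the paper, which offers no argument at all here: the lemma is asserted to ``follow immediately from \cite{MoritaI} Theorem 2 (i)'', and is then \emph{used} to deduce $\mathscr{N}_s(\Sigma)=\tilde{\mathscr{N}}_s(\Sigma)$ in the proposition that follows. You run this logic in reverse: you identify the smallest $\RG$-invariant closed subspace containing $1$ with the closed span of the orbit $\pi_s(\RG)1$, compute from (\ref{eq:discrete series}) that this span is precisely $\mathscr{N}^0_s(\Sigma)$, and then prove the density $\tilde{\mathscr{N}}^0_s(\Sigma)\subseteq\overline{\mathscr{N}^0_s(\Sigma)}$ directly: difference quotients in the pole location to manufacture the higher poles $(Z-z)^{-s-k}$, and scaling limits $|z|\to\infty$ to manufacture the monomials $Z^i$. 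Both limits are legitimate: on $\Sigma(m)$ one has $|Z-z|\geq\max\{1,|z|\}\,|\varpi|^{m}$ and $|Z|\leq|\varpi|^{-m}$, so your geometric expansions converge in the supremum norm with tails vanishing as claimed, and convergence in the Fr\'echet space $\mathscr{O}(\Sigma)$ is exactly convergence in every $\mathscr{O}(\Sigma(m))$ (one harmless slip: the difference quotient tends to $+s(Z-z)^{-s-1}$, not $-s(Z-z)^{-s-1}$; also, for the iteration one should remark that difference quotients of elements of the closure remain in the closure). What your route buys is self-containedness --- carried out in full, it proves the lemma and the subsequent proposition simultaneously, with no external input; what the paper's citation buys is brevity, since the estimates you sketch are in substance Morita--Murase's. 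The one incongruity is your closing sentence: \cite{MoritaI} Theorem 2 (i) is essentially the statement of the lemma itself, not merely a repository of estimates, so ``invoking it for the estimates'' after sketching them collapses your argument back into the paper's one-line citation. Either carry the estimates through (they are routine, as verified above) or cite the theorem at the outset; doing both is redundant, though not incorrect.
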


We note that $ 1 \in \mathscr{N}_s(\Sigma)$ and
$\mathscr{N}^0_s(\Sigma)\subset \tilde{\mathscr{N}}^0_s(\Sigma)$,
and therefore we have the following proposition.

\begin{prop}
Let $s$ be a positive integer. $\mathscr{N}_s(\Sigma) =
\tilde{\mathscr{N}}_s(\Sigma)$.
\end{prop}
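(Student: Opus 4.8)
The plan is to prove the two inclusions separately, with the reverse inclusion resting entirely on the minimality characterization furnished by Lemma \ref{lem:tildeCN}; no genuine analysis is needed once that lemma is in hand.

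First I would record the inclusion $\mathscr{N}_s(\Sigma) \subseteq \tilde{\mathscr{N}}_s(\Sigma)$. This is purely formal: it was already noted that $\mathscr{N}^0_s(\Sigma) \subset \tilde{\mathscr{N}}^0_s(\Sigma)$ (the spanning functions $1$ and $(Z-z)^{-s}$ are visibly of the admissible partial-fraction shape), and $\tilde{\mathscr{N}}_s(\Sigma)$ is by definition the closure of $\tilde{\mathscr{N}}^0_s(\Sigma)$, hence closed. Monotonicity of the closure operation then gives $\mathscr{N}_s(\Sigma) = \overline{\mathscr{N}^0_s(\Sigma)} \subseteq \overline{\tilde{\mathscr{N}}^0_s(\Sigma)} = \tilde{\mathscr{N}}_s(\Sigma)$.

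For the reverse inclusion I would verify that $\mathscr{N}_s(\Sigma)$ is itself a closed $\RG$-invariant subspace of $\mathscr{O}(\Sigma)$ containing $1$: it is closed as a closure, it is $\RG$-invariant as remarked immediately after its definition, and it contains $1$ since $1 \in \mathscr{N}^0_s(\Sigma) \subseteq \mathscr{N}_s(\Sigma)$. By Lemma \ref{lem:tildeCN}, $\tilde{\mathscr{N}}_s(\Sigma)$ is the \emph{smallest} closed $\RG$-invariant subspace of $\mathscr{O}(\Sigma)$ containing $1$, whence $\tilde{\mathscr{N}}_s(\Sigma) \subseteq \mathscr{N}_s(\Sigma)$. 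Combining the two inclusions yields the asserted equality.

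I do not expect a real obstacle, since all the substance is absorbed into Lemma \ref{lem:tildeCN} (itself deduced from \cite{MoritaI} Theorem 2(i)) together with the already-established $\RG$-invariance of $\mathscr{N}_s(\Sigma)$. The one conceptual point worth flagging is that $\tilde{\mathscr{N}}^0_s(\Sigma)$ is strictly larger than $\mathscr{N}^0_s(\Sigma)$: it contains higher-order principal parts $(Z-z)^{-s-1}, (Z-z)^{-s-2}, \dots$ and arbitrary polynomials, none of which lie in the span of $1$ and the $(Z-z)^{-s}$. Thus the identity holds only after passing to closures, and what makes it work is precisely that $\mathscr{N}_s(\Sigma)$, being closed and $\RG$-stable and containing the cyclic vector $1$, already absorbs all of these extra functions by the minimality statement.
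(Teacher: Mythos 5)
Your proof is correct and is exactly the paper's argument: the paper notes precisely the two facts you use (that $1 \in \mathscr{N}_s(\Sigma)$ and that $\mathscr{N}^0_s(\Sigma) \subset \tilde{\mathscr{N}}^0_s(\Sigma)$) and then invokes the minimality statement of Lemma \ref{lem:tildeCN} together with the closedness and $\RG$-invariance of $\mathscr{N}_s(\Sigma)$ to get the two inclusions. Your write-up just makes explicit what the paper leaves as a one-line remark before the proposition.
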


\subsection{Principal series of $\SL(2,F)$}
\label{sec:SLPrincipalSeries}

{The references for this section are \cite{MoritaII} \S 2, 3 and \cite{MoritaIII} \S 2.}

Let $s$ be an integer. Define  the character of $F^{\times}$,
${\chi_s}(z)=z^s$. Let $C^{\an}_{\chi_s}(\CP)$ denote the space of $K$-valued
locally analytic functions $\varphi$ on $\CP$ satisfying
$$\varphi(hx, hy) = {\chi_s}(h)\varphi(x,y), \hskip 10pt (x,y)\in \CP, h\in F^{\times}.$$
In the following, we identify $(1, F)$ with $F$ via $(1, -z) \ra z$
and write $\varphi(z) = \varphi(1,-z)$ and $\varphi(\infty) =
\varphi(0,1)$. Then $\varphi(z)$ is a locally analytic function on
$F$ that has Laurent expansion at infinity of the form:
$$\varphi(z) = \sum_{i=s}^{-\infty} b_i^{(\infty)} z^i,
\hskip 10pt b_i^{(\infty)}\in K.$$ Clearly, $\varphi(\infty) =
(-1)^s b_s^{(\infty)}$.
Let $D_s$ denote the space of all such functions
$\varphi(z)$ on $F$. We have a $K$-linear bijective map between $D_s $ and $
C_{\chi_s}^{\an}(\CP)$; we endow $D_s$ with the topology that makes this map
into an isomorphism. Then the representation $(C^{\an}_{\chi_s}(\CP), T_{{\chi_s}})$ of $\RG$, 
 defined by (\ref{eq:defTsigma}), can be
realized as the representation $(D_s, T_s)$:
\begin{equation}\label{eq:defTchi}T_s(g) \varphi (z)
:= (-cz+a)^s\varphi((dz-b)(-cz+a)^{-1}), \hskip 10pt
g=\begin{pmatrix}a & b
\\ c & d\end{pmatrix}\in \RG,  \varphi \in D_s.\end{equation}

Let $B^0_s$ be the subspace of $D_s$ spanned by the $K$-valued
locally analytic functions $\varphi^{(s)}_{Z}(z) := (Z-z)^s$ for all $Z\in \Sigma$ (see
(\ref{eq:varphiZv*})), and let $B_s$ be the closure
of $B^0_s$. $B _s$ is $\RG$-invariant.

\delete{

If $s$ is a negative integer, Morita proved
\begin{lem}[cf. \cite{MoritaIII} Theorem 1 (i) and \cite{Schneider-Teitelbaum2002} Theorem
6.1.]\label{lem:Dmirreducible} Let $s$ be a negative integer, then
$(D_s, T_s)$ is a topologically irreducible $\RG$-representation.
\end{lem}

By Lemma \ref{lem:Dmirreducible}, we obtain

\begin{prop}\label{prop:Bs=Ds}
If $s$ is  a negative integer, then $B_s=D_s$.
\end{prop}

\begin{rem}
In the proof of \cite{MoritaIII} Proposition 3, the reduction to the
case where $K$ is algebraically closed fails to hold. We must assume
that $K$ is algebraically closed in \cite{MoritaIII} Theorem 1.
However, Schneider and Teitelbaum proved \cite{MoritaIII} Theorem 1
(i) in \cite{Schneider-Teitelbaum2002} as an application of locally
analytic distributions.
\end{rem}
}

If $s$ is a nonnegative integer, let $P_s^{\loc}$ denote the subspace of
$D_s$ consisting of $K$-valued functions $\varphi(z)$ on $F$ such
that the local Taylor expansion at each point of $F$ and the Laurent
expansion at infinity of $\varphi(z)$ are both given by polynomials
of degree $\leqslant s$. $P_s^{\loc}$ is $\RG$-invariant. Observe that $\varphi\in
P_s^{\loc}$ if and only if its $(s+1)$-th derivation
$(d/dz)^{s+1}\varphi (z) \equiv  0$. Let $P_s$ denote the space of all
polynomial functions on $F$ of degree $\leqslant s$. $P_s^{\loc}$ and
$P_s$ are both closed $\RG$-invariant subspaces. Clearly $B_s =
P_s$.

The subspace of $C^{\an}_{\chi_s} (\CP)$ corresponding to $P_s$ (resp.
$P_s^\loc$) is the space of (resp. locally) homogeneous polynomial
functions $\varphi(x, y) $ on $\CP$ of degree $s$.

In addition, we define $P^\loc_{-1}= P_{-1} = 0$.

\begin{prop}[Casselman's intertwining
operator]\label{prop:Casselman}Let $s\geqslant -1$. The $s+1$-th differentiation map
\begin{eqnarray}\notag \label{eq:defCasselman}
S_s: D_s &\ra& D_{-s-2}\\
\varphi(z)&\mapsto& (d/dz)^{s+1}\varphi (z)
\end{eqnarray}
induces a  $\RG$-isomorphism from $D_s/P^\loc_s$ onto $D_{-s-2}$.
\end{prop}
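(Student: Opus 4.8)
The plan is to establish the three claimed properties of $S_s$ in turn: that $S_s$ maps $D_s$ into $D_{-s-2}$, that its kernel is exactly $P_s^\loc$, and that it is surjective; then $\RG$-equivariance upgrades the linear isomorphism $D_s/P^\loc_s \simeq D_{-s-2}$ to a $\RG$-isomorphism. First I would verify the target: a function $\varphi\in D_s$ has, near each point of $F$, a locally analytic (hence termwise differentiable) expansion, and at infinity a Laurent expansion $\varphi(z)=\sum_{i\leqslant s} b_i^{(\infty)} z^i$. Differentiating $s+1$ times kills every monomial $z^i$ with $0\leqslant i\leqslant s$ and sends $z^i$ to a term of order $z^{i-s-1}$; since the top exponent is $s$, the image has top exponent $\leqslant -1$, which matches the condition defining $D_{-s-2}$ (whose expansions at infinity run over $i\leqslant -s-2$ after accounting for the shift). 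I would check this growth/order bookkeeping carefully, as it is the point where the precise index conventions of $D_s$ enter.

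Next, for the kernel, the excerpt already notes that $\varphi\in P_s^\loc$ if and only if $(d/dz)^{s+1}\varphi\equiv 0$; this is precisely the statement that $\Ker S_s=P_s^\loc$, so that part is essentially given. For surjectivity I would argue by antidifferentiation: given $\eta\in D_{-s-2}$, one wants a locally analytic $\varphi$ on $F$ with $(d/dz)^{s+1}\varphi=\eta$ and with the correct global transformation behavior (membership in $D_s$). Locally, integrating a convergent power series $s+1$ times produces a convergent power series (in the $p$-adic setting the radius of convergence is preserved under formal antidifferentiation because dividing coefficients by integers can only enlarge $|{\cdot}|$ in a controlled way, and the relevant expansions are those allowed in $D_s$), so local antiderivatives exist. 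The content is then to patch these local antiderivatives into a single globally defined element of $D_s$ and to match the Laurent tail at infinity.

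The main obstacle I anticipate is this surjectivity/patching step, together with the verification of $\RG$-equivariance. For equivariance, one computes directly from the definition (\ref{eq:defTchi}) of $T_s$ and the chain rule: differentiating $T_s(g)\varphi(z)=(-cz+a)^s\varphi((dz-b)(-cz+a)^{-1})$ exactly $s+1$ times, the automorphy factor $(-cz+a)^s$ interacts with the derivatives of the Möbius argument so that, after the dust settles, one obtains $(-cz+a)^{-s-2}$ times $\big((d/dz)^{s+1}\varphi\big)$ evaluated at the transformed point, which is exactly $T_{-s-2}(g)$ applied to $S_s\varphi$. This is a genuine but routine computation; the cleanest route is to check it on the spanning functions $\varphi^{(s)}_Z(z)=(Z-z)^s$, for which $(d/dz)^{s+1}(Z-z)^s=0$ trivially when differentiated too far, so one instead checks the intertwining identity on a convenient generating set and extends by continuity and $\RG$-invariance of $P_s^\loc$. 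Once equivariance is in hand, $S_s$ descends to an injective $\RG$-map $D_s/P_s^\loc\to D_{-s-2}$, and combined with surjectivity this yields the asserted $\RG$-isomorphism.
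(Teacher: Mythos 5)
Your skeleton (well-definedness via Laurent bookkeeping at infinity, kernel $=P_s^\loc$ from the observation already recorded before the proposition, surjectivity by antidifferentiation) matches the paper's proof, and your anticipated "main obstacle" of patching local antiderivatives is in fact a non-issue: $F$ is totally disconnected, so one defines an antiderivative piecewise on a disjoint clopen cover and separately integrates the Laurent tail at infinity termwise, where no $z^{-1}$ obstruction can arise because every exponent met along the way is $\leqslant -2$. The first genuine gap is in your equivariance argument: the "cleanest route" you propose is a dead end. For $s\geqslant 0$ the functions $\varphi^{(s)}_Z(z)=(Z-z)^s$ are polynomials of degree $s$, so they lie in the kernel $P_s^\loc$ and their closed span is $B_s=P_s$, which is nowhere near dense in $D_s$; checking the intertwining identity on them yields $0=0$, nothing can be "extended by continuity" from such a set, and the "convenient generating set" you then invoke is never specified. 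The workable reduction --- and the paper's --- is in the group variable, not the function variable: $\RG$ is generated by the translations $\begin{pmatrix}1&\delta\\0&1\end{pmatrix}$, $\delta\in F$ (for which equivariance is obvious), together with the Weyl element, so for arbitrary $\varphi$ everything comes down to the single identity $\left(\frac{d}{dz}\right)^{s+1}\big(z^s\varphi(-1/z)\big)=z^{-s-2}\,\varphi^{(s+1)}(-1/z)$, which the paper proves by an induction on the order of differentiation.

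The second gap is that you conclude from "continuous $+$ bijective $+$ equivariant" that $S_s$ induces the asserted isomorphism, with no argument that the inverse is continuous. That is precisely the step the paper has to work for: $D_s/P_s^\loc$ and $D_{-s-2}$ are spaces of compact type (inductive limits of Banach spaces), not Fr\'echet spaces, so the open mapping theorem cannot be applied to $S_s$ directly. The paper instead passes to the dual map $S_s^*:(D_{-s-2})^*_b\ra (D_s/P_s^\loc)^*_b$, which is a continuous linear bijection between Fr\'echet spaces (Corollary \ref{cor:compacttype}), hence a topological isomorphism by the open mapping theorem (\cite{Schneider} Corollary 8.7), and then uses reflexivity of both spaces to transfer this back to $S_s$ itself. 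Without this duality step (or some substitute open mapping theorem valid for such inductive limits), what you have produced is only a continuous $\RG$-equivariant bijection, not the claimed $\RG$-isomorphism.
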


\delete{
\begin{proof}
In view of the Laurent expansion of $\varphi(z)$ at infinity, it is
easy to see that $\varphi^{(s+1)} (z)$ belongs to $D_{-s-2}$.
Evidently, $S_s$ is a surjective continuous linear operator with
kernel $P^\loc_s$.

Next, we prove the $\RG$-equivariance. Since $\RG$ is generated by
$R(\delta)$ ($\delta \in F$) and $\mathfrak{w}$, despite the obvious
case, it suffices to prove
$$
\left(\frac {d}{dz}\right)^{s+1}\big(z^s\varphi(-1 /z)\big) =
z^{-s-2}\varphi^{(s+1)} (- 1 /z).
$$
This equality follows from
$$\left(\frac {d}{dz}\right)^{i}\big(z^s\varphi(-1 /z)\big) =
\sum_{j=0}^i \left(\begin{matrix}i\ \\
j\ \end{matrix}\right) \cdot \prod_{k=j}^{i-1}(s-k) \cdot
z^{s-i-j}\varphi^{(j)} (- 1 /z),$$ which can be easily proven by
induction.

(\ref{eq:defCasselman}) induces a $\RG$-equivariant continuous
linear bijection $S_s$ from $D_s/P^\loc_s$ onto $D_{-s-2}$. The dual
operator $S_s^* : (D_{-s-2})^*_b \ra (D_s/P^\loc_s)^*_b$ is a
continuous linear bijection between two Fr\'echet spaces (Corollary
\ref{cor:compacttype}). By Corollary 8.7 to the open mapping theorem
in \cite{Schneider}, $S^*_s$ is an isomorphism, then so is $S_s$,
because both spaces are reflexive (Corollary \ref{cor:compacttype}).
\end{proof}
}


\subsection{Morita's duality for $\SL(2, F)$}

\begin{defn}[cf. \cite{MoritaII} \textsection 5.]
Let $s$ be an integer. 

1. We call the following $K$-linear pairing
$\langle\ , \ \rangle_M ^{(s)} : D_{s-2} \times \mathscr{O}(\Sigma)
\ra K$ \textbf{Morita's pairing}:
\begin{equation}
\langle  \varphi, \psi\rangle_M ^{(s)} := \text{ the sum of residues
of the } 1 \text{-form } \varphi(z) \psi(z)\ dz \text{ on } \CL,
\end{equation}
where $\varphi \in D_{s-2}$ and $\psi \in \mathscr{O}(\Sigma)$.

2. For $\psi \in \mathscr{O}(\Sigma)$, let $  M  _s(\psi)$ be the
linear functional of $D_{s-2}$ defined by $$\langle \varphi,
  M  _s(\psi)\rangle = \langle \varphi, \psi\rangle _M ^{(s)}, \hskip
10pt \varphi \in D_{s-2}.$$ $  M  _s: \mathscr{O}(\Sigma)\ra
(D_{s-2})^*$ is called \textbf{Morita's duality operator}.
\end{defn}



\delete{ We calculate the pairing explicitly as follow.

Let $\varphi(z) \in D_{s-2}$ and $\psi(Z) \in \mathscr{O}(\Sigma)$.
For a nonnegative integer $m$, let $z_1, ...,z_{q^{m+1} + q^m -1}$
be points in $\CP^{(m)}_0$. Then $F \cap B^-(m; z_j)$ and $F \cap
B^-(m; \infty)$ form a disjoint open covering of $F$, whence $D(m;
z_j) = \{z\in F\ |\ |z-z_j|\leqslant \rho_{z_j, m}\}$ and $D(m; \infty) =
\{z\in F \ |\ |z|\geqslant \rho_{0, m}^{-1}\}$ cover $F$. We take $m$
sufficiently large such that\\
(1) The Laurent expansion of $\varphi(z)$ at infinity
\begin{equation*}
\varphi(z) = \sum_{i=s-2}^{-\infty} b_i^{(\infty)} z^{i}, \hskip 10
pt b_i^{(\infty)}\in K,
\end{equation*}
holds for $z\in D(m; \infty)$;\\
(2) For $j= 1, ...,q^{m+1} +
q^m -1$, the Taylor expansion
\begin{equation*}
\varphi(z) = \sum_{i=0}^{\infty} b_i^{(j)} (z-z_j)^{i}, \hskip 10 pt
b_i^{(j)}\in K,
\end{equation*}
holds for $z\in D(m; z_j)$.

Let
\begin{equation*}
\psi(Z) = \sum_{i=0}^{\infty} a_i^{(\infty)} Z^i +
\sum_{j=1}^{q^{m+1} + q^m -1} \sum_{i=-1}^{-\infty} a_i^{(j)}
(Z-z_j)^{i}
\end{equation*}
be the fractional expansion of $\psi(Z)$ on $\Sigma(m)$.

Then
\begin{equation}
\langle  \varphi, \psi\rangle_M ^{(s)} = - \sum_{\scriptstyle i\geq
1\atop \scriptstyle \ i\geqslant -s+2}
a_{i-1}^{(\infty)}b_{-i}^{(\infty)} + \sum_{j=1}^{q^{m+1} + q^m -1}
\sum_{i = 0}^{\infty} a_{-i-1}^{(j)}b_i^{(j)} .
\end{equation}

The series converges and defines a continuous pairing of $D_{s-2}$
and $\mathscr{O}(\Sigma)$.

If $s $ is a positive integer, the locally analytic function
$\varphi \in D_{s-2}$ is entire on $F$ if and only if $\varphi \in
P_{s-2}$. Therefore Morita's pairing $\langle  \ , \ \rangle_M
^{(s)}$ induces a non-degenerated pairing between $D_{s-2}/P_{s-2}$
and $\mathscr{O}(\Sigma)$. Moreover, $\langle P_{s-2}^\loc , \psi
\rangle_M ^{(s)} \equiv  0$ if and only if $a_i^{(j)} = 0$ for all
$j = 1, ..., q^{m+1} + q^m -1$ and $i=-1, ..., -s+1$, that is, $\psi
\in \mathscr{N}_s(\Sigma)$. $\langle  \ , \ \rangle_M ^{(s)}$
induces a non-degenerated pairing between $D_{s-2}/P^\loc_{s-2}$ and
$\mathscr{N}_s(\Sigma)$.

If $s\leqslant 0$, $\langle D_{s-2}, \psi\rangle_M ^{(s)} \equiv  0$ if
and only if $a_*^{(*)}=0$ for all but $a_0^{(\infty)}, ...,
a_{-s}^{(\infty)}$, that is, $\psi$ is a polynomial function of
degree $\leqslant -s$, $\psi\in \mathscr{N}_s(\Sigma)$. Therefore
$\langle  \ , \ \rangle_M ^{(s)}$ induces a non-degenerated pairing
between $D_{s-2}$ and $\mathscr{O}(\Sigma)/\mathscr{N}_s(\Sigma)$. }

By some explicit computations of $\langle\ , \ \rangle_M ^{(s)}$
(ibid.), we obtain the following proposition.

\begin{prop}[Compare ibid. Theorem 3]\label{prop:Mm}Let $s$ be an integer.
	
(1) If $s > 0$, then $  M  _s$ induces isomorphisms
of $\RG$-representations
$$ (\mathscr{O}(\Sigma), \pi_s) \xrightarrow{\cong} ((D_{s-2}/P_{s-2})^*_b,
T_{s-2}^*)$$ and $$ (\mathscr{N}_s(\Sigma), \pi_s)
\xrightarrow{\cong}
((D_{s-2}/P^\loc_{s-2})^*_b, T_{s-2}^*).$$

(2) If $s\leqslant 0$, then $  M  _s$ induces isomorphisms of
$\RG$-representations $$ (\mathscr{O}(\Sigma)/\mathscr{N}_s(\Sigma),
\pi_s) \xrightarrow{\cong} ((D_{s-2})^*_b, T_{s-2}^*).$$

We still denote the isomorphisms in (1) and (2) by $  M  _s$.
\end{prop}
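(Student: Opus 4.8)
The plan is to work directly with the residue description of $\langle\ ,\ \rangle_M^{(s)}$ and to read off everything from local expansions. First I would make the pairing explicit: fix $m$ large enough that the finite covering $\{D(m;z_j)\}_j\cup\{D(m;\infty)\}$ of $\CL=\BP^1(F)$ separates the poles of a given $\psi\in\mathscr{O}(\Sigma(m))$, write $\psi$ in its partial-fraction form $\psi(Z)=\sum_{i\ge0}a_i^{(\infty)}Z^i+\sum_j\sum_{i\le-1}a_i^{(j)}(Z-z_j)^i$, and expand $\varphi\in D_{s-2}$ by its Taylor series $\sum_i b_i^{(j)}(z-z_j)^i$ on each $D(m;z_j)$ and by its Laurent tail at $\infty$. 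The residue of $\varphi\psi\,dz$ on each disc is then a bilinear expression in the $a$'s and $b$'s, and summing over the (finitely many) discs yields a closed formula for $\langle\varphi,\psi\rangle_M^{(s)}$. Convergence of each local contribution follows by pairing the decay of the principal-part coefficients $a_i^{(j)}$ (forced by $\psi\in\mathscr{O}(\Sigma(m))$, cf.\ Corollary \ref{cor:det-1}) against the boundedness of the Taylor coefficients $b_i^{(j)}$ of the locally analytic $\varphi$; this simultaneously shows that $M_s(\psi)$ is a continuous functional on $D_{s-2}$ and that $M_s:\mathscr{O}(\Sigma)\to(D_{s-2})^*_b$ is continuous.

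Next I would dispatch $\RG$-equivariance, which is purely formal and should precede any analysis of kernel and image. Writing $w=g^{-1}z$ and using $\det g=1$ gives $dw=(-cz+a)^{-2}dz$, so that the weight $s$ in $\pi_s$, the weight $s-2$ in $T_{s-2}$, and the factor $dz$ combine into the single invariance identity $[T_{s-2}(g)\varphi](z)\,[\pi_s(g)\psi](z)\,dz=\varphi(w)\psi(w)\,dw$. Since a Möbius transformation permutes $\BP^1(F)$ and preserves the total residue, summation over $\CL$ gives $\langle T_{s-2}(g)\varphi,\pi_s(g)\psi\rangle_M^{(s)}=\langle\varphi,\psi\rangle_M^{(s)}$, i.e.\ $M_s\circ\pi_s(g)=T^*_{s-2}(g)\circ M_s$ in the adjoint convention of Lemma \ref{lem:Iequivariant}.

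The heart of the proof is the computation of kernel and image, where I expect the real work to lie. For $s>0$ I would first note that when $\varphi\in P_{s-2}$ is a \emph{global} polynomial the form $\varphi\psi\,dz$ is globally meromorphic on $\BP^1$, so the residue theorem forces $\langle\varphi,\psi\rangle_M^{(s)}=0$; hence $M_s$ lands in $(D_{s-2}/P_{s-2})^*$. Injectivity comes from the explicit formula: testing $\psi$ against locally-supported monomials $(z-z_j)^i$ and against the Laurent monomials at $\infty$ recovers every coefficient of $\psi$, so $M_s(\psi)=0$ forces $\psi=0$; the same bookkeeping run backwards produces a preimage for any prescribed functional and gives surjectivity onto $(D_{s-2}/P_{s-2})^*$. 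For the refined statement I would show, again from the formula, that $\langle P^\loc_{s-2},\psi\rangle_M^{(s)}\equiv0$ is exactly the vanishing of the low-order principal parts $a_{-1}^{(j)},\dots,a_{-s+1}^{(j)}$, which by definition characterizes $\psi\in\mathscr{N}_s(\Sigma)$; thus $M_s$ identifies $\mathscr{N}_s(\Sigma)$ with the annihilator $(D_{s-2}/P^\loc_{s-2})^*$. The case $s\le0$ is parallel but simpler: here $P_{s-2}=P^\loc_{s-2}=0$, a degree count shows $\langle\varphi,\psi\rangle_M^{(s)}=0$ for every polynomial $\psi$ of degree $\le-s$, and conversely these are the only $\psi$ annihilating all of $D_{s-2}$, so $\ker M_s=\mathscr{N}_s(\Sigma)$ and $M_s$ descends to a bijection $\mathscr{O}(\Sigma)/\mathscr{N}_s(\Sigma)\to(D_{s-2})^*$.

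Finally, to upgrade these continuous bijections to topological isomorphisms of $\RG$-representations I would invoke the functional-analytic package already in place: $D_{s-2}$ and its relevant subquotients are of compact type, while $\mathscr{O}(\Sigma)$ together with its closed subspaces and Hausdorff quotients are nuclear Fréchet (Corollary \ref{cor:compacttype}, Proposition \ref{prop:OOmega}), so the duals at issue are again Fréchet; a continuous bijection between them is open by the open mapping theorem (\cite{Schneider} Proposition 8.6), and reflexivity together with the spherical completeness of $K$ (for Hahn–Banach, \cite{Schneider} Corollary 9.4) lets one pass freely between a space, its annihilator, and their duals. The main obstacle is entirely in the previous paragraph: making the coefficient-matching precise enough to pin down both the exact image and the exact kernel — in particular to separate the roles of $P_{s-2}$ and $P^\loc_{s-2}$ — which is exactly the explicit residue computation of \cite{MoritaII} that the statement defers to.
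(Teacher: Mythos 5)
Your proposal follows essentially the same route as the paper: the paper gives no independent argument for this proposition but simply appeals to Morita's explicit computation of the residue pairing (``By some explicit computations of $\langle\ ,\ \rangle_M^{(s)}$ (ibid.), we obtain the following proposition,'' with the statement labelled ``Compare ibid.\ Theorem 3''), and your outline --- the coefficient formula for the pairing on a covering by residue discs, formal equivariance via $dw=(-cz+a)^{-2}\,dz$, identification of kernel and image by matching principal-part coefficients against local test monomials, and the open-mapping/reflexivity upgrade --- is precisely that computation from \cite{MoritaII}. The one step you leave soft (surjectivity, ``bookkeeping run backwards''), and the fact that the residue-theorem vanishing on $P_{s-2}$ needs to be run first on rational $\psi$ and extended by density and continuity, are exactly the points both you and the paper defer to Morita's explicit calculation.
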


\subsection{The duality operator $I_s$}

We define a continuous linear operator $I_s$ from $(B_{-s})^*_b$ to
$\mathscr{N}_s(\Sigma)$ (see \textsection \ref{sec:Isigma})
\begin{equation} \label{eq:Is}
I_s(\xi) (Z) :=  \langle \varphi^{(-s)}_{Z}, \xi \rangle.
\end{equation}

\begin{thm}\label{thm:commutative}
If $s $ is a positive integer, then we have a commutative diagram:
$$
\begin{CD}
(\mathscr{N}_s(\Sigma), \pi_s) @>{(s-1)!\   M  _s}>> ((D_{s-2}/P^\loc_{s-2})^*_b, T_{s-2}^*)\\
@AA{I_s}A @A{S_{s-2}^*}AA  \\
((B_{-s})^*_b, T_{-s}^*) @= ((D_{-s})^*_b, T_{-s}^*)
\end{CD}
$$
\end{thm}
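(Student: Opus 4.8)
The plan is to reduce the claimed commutativity to the single operator identity
$(s-1)!\,M_s\circ I_s = S_{s-2}^*$ of continuous maps from $(D_{-s})^*_b=(B_{-s})^*_b$ (the bottom row of the diagram) to $(D_{s-2}/P^\loc_{s-2})^*_b$. All three constituents are continuous: $I_s$ and $M_s$ are topological isomorphisms by Theorem \ref{thm:Iisomorphism} and Proposition \ref{prop:Mm}, while $S_{s-2}^*$ is the adjoint of the continuous operator $S_{s-2}$ of Proposition \ref{prop:Casselman}. Hence it suffices to check the identity on a dense subspace of $(D_{-s})^*_b$. Since $I_s$ is an isomorphism onto $\mathscr{N}_s(\Sigma)$ and $\mathscr{N}^0_s(\Sigma)$ is dense there (Lemma \ref{lem:CN0denseinCN}), the preimage $I_s^{-1}(\mathscr{N}^0_s(\Sigma))$ is dense; by (\ref{eq:Is}) and the value-at-infinity convention this preimage is spanned by the evaluation (Dirac) functionals $\xi_z$ ($z\in F$, with $I_s\xi_z=\psi^{(s)}_z$) and $\xi_\infty$ (with $I_s\xi_\infty=1$). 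So I only need to compare, for each such $\xi$ and each test function $\varphi\in D_{s-2}$, the two scalars $(s-1)!\,\langle\varphi, I_s\xi\rangle_M^{(s)}$ and $\langle S_{s-2}\varphi,\xi\rangle=\langle\varphi^{(s-1)},\xi\rangle$.

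For $\xi=\xi_z$ the first scalar is $(s-1)!$ times the sum of residues of the $1$-form $\varphi(\zeta)(\zeta-z)^{-s}\,d\zeta$ on $\CL=\BP^1(F)$. Expanding $\varphi$ in its Taylor series about $z$ identifies the residue at $\zeta=z$ with the coefficient of $(\zeta-z)^{-1}$, namely $\varphi^{(s-1)}(z)/(s-1)!$; a degree count on the Laurent tail of $\varphi\in D_{s-2}$ (top degree $s-2$) against $(\zeta-z)^{-s}$ shows that every exponent at $\infty$ is $\le -2$, so there is no residue at infinity. Thus the first scalar equals $\varphi^{(s-1)}(z)=\langle\varphi^{(s-1)},\xi_z\rangle$. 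Consistency on the quotient is automatic: if $\varphi\in P^\loc_{s-2}$ then $\varphi^{(s-1)}\equiv 0$ and both sides vanish.

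For $\xi=\xi_\infty$ the relevant $1$-form is $\varphi(\zeta)\,d\zeta$, whose only singularity lies at $\infty$; writing $\varphi(\zeta)=\sum_{i\le s-2}b_i\zeta^i$ gives residue $-b_{-1}$, so the first scalar is $-(s-1)!\,b_{-1}$. On the other side $\langle\varphi^{(s-1)},\xi_\infty\rangle=\varphi^{(s-1)}(\infty)$; since $\varphi^{(s-1)}\in D_{-s}$, the value-at-infinity normalization $g(\infty)=(-1)^{-s}b_{-s}(g)$ together with $(d/d\zeta)^{s-1}\zeta^{-1}=(-1)^{s-1}(s-1)!\,\zeta^{-s}$ yields $\varphi^{(s-1)}(\infty)=-(s-1)!\,b_{-1}$, matching the first scalar. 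By continuity the identity then extends from this dense family to all of $(D_{-s})^*_b$, proving the commutativity.

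I expect the main obstacle to be bookkeeping rather than conceptual: pinning down the constant $(s-1)!$ and all signs requires care, and the two delicate points are (i) verifying that the finite residue is the \emph{only} contribution, i.e. that the degree bound on $D_{s-2}$ kills the residue at $\infty$, and (ii) correctly translating between the residue at the cusp and the normalization $g(\infty)=(-1)^{-s}b_{-s}(g)$ for elements of $D_{-s}$. A more conceptual alternative is to prove the identity for arbitrary $\xi$ at once by interchanging the continuous functional $\xi$ with the residue operation, using $\mathrm{Res}_\zeta[\varphi(\zeta)(\zeta-w)^{-s}\,d\zeta]=\varphi^{(s-1)}(w)/(s-1)!$; this interchange is legitimate precisely because of the continuity invoked above, so it ultimately rests on the same facts.
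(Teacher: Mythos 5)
Your proof is correct, and its skeleton---reduce to an operator identity between continuous maps, then verify it on a dense family of functionals by residue computations---matches the paper's. But the reduction mechanism is genuinely different. The paper exploits $\RG$-equivariance: since $I_s$, $M_s$ and $S_{s-2}^*$ are morphisms of $\RG$-representations (Lemma \ref{lem:Iequivariant}, Propositions \ref{prop:Mm} and \ref{prop:Casselman}) and the $\RG$-orbit of the constant function $1$ spans $\mathscr{N}^0_s(\Sigma)$, it suffices to check the identity at the \emph{single} functional $\xi_\infty$, tested against the family $\varphi^{(-s)}_Z$ which topologically spans $B_{-s}$; the whole computation collapses to the one residue $\Res_\infty (Z-z)^{-1}\,dz = 1$. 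You never invoke equivariance: instead you check the identity at \emph{every} Dirac functional $\xi_z$, $z\in F\cup\{\infty\}$, against every test function $\varphi\in D_{s-2}$, which costs you the general identity $\Res_{z}\bigl[\varphi(\zeta)(\zeta-z)^{-s}\,d\zeta\bigr]=\varphi^{(s-1)}(z)/(s-1)!$, the degree-count killing the residue at infinity, and the sign bookkeeping at the cusp---all of which you carry out correctly (your values $-(s-1)!\,b_{-1}$ on both sides at $\xi_\infty$ agree with the paper's conventions). Your route buys independence from the group action, so that equivariance of the composite becomes a corollary rather than an ingredient; the paper's route buys computational economy (one residue instead of a two-case family) at the price of the cyclicity input.

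One subtlety deserves attention: the bottom edge of the diagram. You take the identification $(B_{-s})^*_b=(D_{-s})^*_b$ at face value, and your density statement (the preimage of $\mathscr{N}^0_s(\Sigma)$ under $I_s$, via Lemma \ref{lem:CN0denseinCN} and Theorem \ref{thm:Iisomorphism}) lives in $(B_{-s})^*_b$. The paper cannot justify the literal equality $B_{-s}=D_{-s}$---that is precisely the consequence of Morita's irreducibility theorem whose proof has a gap (Remark \ref{rem:irred})---so its proof instead reads the bottom edge as the restriction map $i^*:(D_{-s})^*_b\rightarrow(B_{-s})^*_b$, surjective by Hahn--Banach, and proves the corresponding triangle. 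Under that reading, your residue identities at $\xi_z,\xi_\infty$ are still exactly what is needed, but you must know that the span of the Dirac functionals is dense in $(D_{-s})^*_b$, not merely in $(B_{-s})^*_b$; this is true (the span is weak-$*$ dense because only the zero function vanishes at every point of $\BP^1(F)$, and being convex it is then strongly dense, using the reflexivity and Fr\'echet duality of Corollary \ref{cor:compacttype}), but it is not supplied by your pullback-along-$I_s$ argument. This is a repairable gap in bookkeeping, not a flaw in the computations.
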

\begin{proof}
Let $i$ be
the inclusion: $B _{-s} \hookrightarrow D_{-s}$. Then $i^*: (D_{-s})^*_b \ra  (B_{-s})^*_b$ is surjective due to the Hahn-Banach
Theorem. According to Theorem \ref{thm:Iisomorphism}, Proposition
\ref{prop:Mm} (1) and Proposition \ref{prop:Casselman}, the maps $I_s$,
$(s-1)!   M  _s$ and $S_{s-2}^*$ in the diagram are all isomorphisms
of $\RG$-representations. Therefore it suffices to prove the
commutativity of the following diagram: $$ \xymatrix@C=-1pc@R=2pc{
(\mathscr{N}_s(\Sigma), \pi_s)\ar[rr]^(.4){(s-1)!\   M  _s}& &
((D_{s-2}/P^\loc_{s-2})^*_b,
T_{s-2}^*)\ar@{->>}[ld]^(.35){ i^*\circ (S_{s-2}^{-1})^*}\\
 & ((B_{-s})^*_b,
 T_{-s}^*)\ar[ul]^(.6){I_s}}$$

We define $\xi_\infty\in (B_{-s})^*$ by $\langle \varphi^{(-s)}_Z,
\xi_\infty\rangle = \varphi^{(-s)}_Z(\infty) = 1$, then
$I_s(\xi_{\infty})(Z) = 1$ by definition (\ref{eq:Is}).
Since $\pi_s(g) 1$, for all $g\in \RG$, topologically spans
$\mathscr{N}_s$, we are reduced to proving 
\begin{equation}
\label{eq:commutativity1} (s-1)!\ (S_{s-2}^{-1})^* \circ   M  _s (1) =
\xi_{\infty}.
\end{equation}

For any $Z\in \Sigma$, we have $S_{s-2}
\left(\varphi^{(-1)}_Z\right) = (s-1)! \varphi^{(- s)}_Z,$ hence
\begin{align*}
& \langle \varphi^{(- s)}_Z, (s-1)!\ (S_{s-2}^{-1})^* \circ   M  _s
(1) \rangle \\ =\ & \langle (s-1)! S_{s-2}^{-1}(\varphi^{(- s)}_Z),
  M  _s
(1) \rangle \\
 =\ &\langle \varphi^{(-1)}_Z,  1\rangle_M ^{(s)}\\
 =\ & \Res_\infty (Z-z)^{-1} dz\\  = & 1 \\
 =\ & \langle \varphi^{(- s)}_Z,\xi_{\infty}\rangle.
\end{align*}
Since  $\varphi^{(-s)}_{Z}$, for all $Z\in \Sigma$, topologically
spans $B_{-s}$, (\ref{eq:commutativity1}) follows.
\end{proof}

\delete{
\begin{rem}
(1) By Schur's lemma, $  M  _s$ equals to $S_{s-2}^* \circ I_s^{-1}$
up to a constant; the constant is $(s-1)!$ according to Theorem
\ref{thm:commutative}.

(2) Let $z_0\in F$, we define $\xi_{z_0}\in (B_{-s})^*$ to be the
pre-image of $\psi_{z_0}$ under $I_s$. We also check the equality
\begin{equation}
\label{eq:commutativity2}\langle (Z-z)^{-1}, (s-1)!\   M  _s
(\psi_{z_0}) \rangle = \langle (Z-z)^{-1}, S_{s-2}^*
(\xi_{z_0})\rangle,
\end{equation}
for all $Z\in \Sigma$. Both sides equal to $(s-1)!\ (Z-z_0)^{-s}$.
\end{rem}
}

If $s\leqslant 0$, then $I_s: (B_{-s})^*_b \ra \mathscr{N}_s(\Sigma)$ is
an isomorphism between two $(-s+1)$-dimensional
$\RG$-representations.

\section{Concluding remarks}


Professor P. Schneider pointed out that the $p$-adic Siegel upper
half-space $\mathbf{\Sigma}$ was constructed in M. van der Put and
H. Voskuil's paper \cite{PutVoskuil} as the symmetric space
associated to the symplectic group $\RG = \Sp(2n, F)$. In fact, if
we let $\RP^-$ denote the transpose of $\RP$, and $\mathbf{G}$,
$\mathbf{U}$ and $\mathbf{P^-}$ the $F$-rigid analytifications of
$\RG$, $\RU$ and $\RP^-$, respectively, then $\mathbf{\Sigma}$ can be
realized as the complement of all the $\RG$-translations of
$(\mathbf{G} - \mathbf{U}\cdot \mathbf{P^-})/\mathbf{P^-}$ in
$\mathbf{G}/\mathbf{P^-}$. However, the construction of the affinoid
covering using the Bruhat-Tits building in \cite{PutVoskuil} is
different from ours.

We claim that this observation enables us to generalize most of the
constructions and results in this article to split reductive
groups.

\end{document}